\documentclass{amsart}
\usepackage{enumitem}
\usepackage[margin=2.5cm]{geometry}
\usepackage[utf8]{inputenc}
\usepackage{epigraph}
\usepackage[all]{xy}
\usepackage{url}
\usepackage{hyperref}
\hypersetup{
colorlinks,
linkcolor={red!50!black},
citecolor={blue!50!black},
urlcolor={blue!80!black}
}
\usepackage{amsfonts}
\usepackage{amsmath}
\usepackage{amssymb}
\usepackage{extpfeil}
\usepackage{amsthm}
\usepackage{mathtools}
\usepackage{tikz-cd}
\usepackage{xcolor}
\usepackage{caption}

\newcommand{\var}{\textup{Var}}

\newcommand{\C}{\mathcal{C}}

\theoremstyle{definition}
\newtheorem{defn}{Definition}[section]
\newtheorem{ex}[defn]{Example}

\newtheorem{rmk}[defn]{Remark}

\newtheorem{cor}[defn]{Corollary}
\theoremstyle{plain}

\newtheorem{lem}[defn]{Lemma}
\newtheorem{prop}[defn]{Proposition}

\newtheorem{thmx}{Theorem}
\newtheorem{propx}[thmx]{Proposition}

\newcommand{\defin}{\mathrm{Def}}

\newcommand{\im}{\textup{Im}}
\newcommand{\D}{\mathcal{D}}
\newcommand{\op}{\textup{op}}
\newcommand{\lc}{\textup{lc}}

\renewcommand{\hom}{\textup{Hom}}
\newcommand{\set}{\textup{Set}}
\newcommand{\M}{\mathcal{M}}
\newcommand{\E}{\mathcal{E}}
\newcommand{\A}{\mathcal{A}}
\newcommand{\W}{\mathcal{W}}

\newcommand{\R}{\mathbb{R}}
\renewcommand{\P}{\mathcal{P}}
\renewcommand{\min}{\textup{min}}
\newcommand{\ch}{\textup{Ch}}
\newcommand{\hb}{\textup{hb}}
\newcommand{\Z}{\mathbb{Z}}
\newcommand{\supp}{\textup{supp}}
\newcommand{\hfun}{\textup{hFun}}

\title{All $K$-theory is squares $K$-theory}
\author{Josefien Kuijper}
\vspace{-1cm}

\begin{document}

\begin{abstract}
   We show that the $K$-theory spectra of many assemblers, such as the assembler of polytopes in euclidean, hyperbolic or spherical geometry, as well as the assembler of definable sets, are equivalent to the $K$-theory spectrum of a squares category. We use this to lift the definable Euler characteristic of definable sets in an o-minimal structure to a map of $K$-theory spectra.
\end{abstract}
\maketitle

\vspace{-1cm}

\section*{Introduction}
\epigraph{\textit{Now I'm playing it real straight\\
And yes, I cut my hair\\
You might think I'm crazy\\
But I don't even care\\
'Cause I can tell what's going on}}{Huey Lewis and the News}

\subsection{Motivation}
There is a well-known analogy between the classical scissors congruence groups, the Grothendieck ring of varieties, and the Grothendieck group of a ring, or more generally, $K_0(\W)$ for a Waldhausen category $\W$. In all cases, the group can be generated by isomorphism classes of objects, modulo the relations that $[O]=0$ for some distinguished object $O$ (the zero module in algebraic settings, or the empty polytope or variety in geometric settings) and the three-term relation 
$$[A] + [C] = [B]$$
whenever $B$ decomposes as $A$ and $C$ in some way. This can mean: when there exists an ``exact sequence'' 
\begin{equation}\label{eq:exact_seq}
    A \to B \to C 
\end{equation}
expressing $C$ as a \textit{quotient} $B/A$ in the algebraic setting; or when there exists arrows
\begin{equation}\label{eq:subtraction_seq}
    A\to B\leftarrow C
\end{equation}
where $C = B\setminus A$ is the \textit{complement} of an open immersion of varieties $A \hookrightarrow B$, for the Grothendieck ring of varieties; or when $A\to B $ and $C\to B$ cover $C$ with an overlap of measure zero, in the case of scissors congruence groups of polytopes.

The formal study of scissors congruence groups as $K$-theory began with the theory of assemblers, developed by Zakharevich \cite{Z-Kth-ass}. Both the Grothendieck ring of varieties, and the classical scissors congruence groups, are the zeroth homotopy group of spectra $K(\var)$ and $K(\P^X_G)$ respectively. The input for such a spectrum is an \textit{assembler}, which is a category with the additional structure of ``finite disjoint covering families'', which encode how an object can be broken up into disjoint pieces. Assembler $K$-theory behaves similar to Waldhausen $K$-theory: there is a Dévissage Theorem and a Localization Theorem. However, in order to directly compare assembler and Waldhausen $K$-theory, one has to overcome a fundamental difference between assemblers and Waldhausen categories: in an assembler, all arrows point from smaller objects to bigger ones, cf. (\ref{eq:subtraction_seq}), whereas in a Waldhausen category, the bigger objects sits in the middle of a sequence of composable morphisms, cf. (\ref{eq:exact_seq}).

The recent framework of \textit{squares categories} \cite{squares}, among other things, aims to clear up this apparent contradiction. Central is the observation that a three-term relation can be replaced by a four-term relation.
% and there are cases where the latter is preferred, such as in the setting of manifolds. 
A squares category $\C$ is, roughly, determined by a collection of distinguished squares
\begin{equation}\label{eq:intro_dist_square}
    \begin{tikzcd}
        A \arrow[r] \arrow[d] & B \arrow[d] \\ C \arrow[r] & D
    \end{tikzcd}
\end{equation}
which correspond to the relation $[A] + [D] = [B] + [C]$ in the zeroth $K$-group. The $K$-theory spectrum of a squares category is defined via a $T_\bullet$-construction which mimics the ``Thomason'' $wT_\bullet$-construction of a Waldhausen category in \cite[Section 1.3]{waldhausen}. It is immediate that any Waldhausen category can be reinterpreted as a squares category where the distinguished squares are homotopy pushout squares, and Waldhausen's proof shows that its $K$-theory as a squares category is the same as its $K$-theory as a Waldhausen category.

In geometric settings, it is natural to define a squares category where a square of the form (\ref{eq:intro_dist_square}) is distinguished if inclusions $B\to D$ and $C\to D$ cover $D$ with intersection $A$.
This gives rise to a squares category of varieties, whose $K$-theory can be shown to be equivalent to the $K$-theory the assembler of varieties \cite[Proposition 4.6]{squares}. Similarly, squares categories of polytopes and of definable sets can be defined. Moreover, there are examples of squares categories that do not fit the assembler framework, such as the squares category of (equivariant) manifolds with boundary \cite{manifolds}, \cite{equiv_mfolds}. Its squares $K$-theory spectrum lifts the ``SK-group with boundary'' $\textup{SK}^\partial_n$.

At this point, it is starting to look like all $K$-theory is indeed squares $K$-theory, but there is one caveat. For polytopes and definable sets, unlike varieties, it has been unclear if the natural squares categories that one can define in these contexts, have the same $K$-theory spectra as the assemblers of polytopes and definable sets respectively; it has only been shown that the $K_0$-groups agree. There has been an interest especially in the assembler $K$-theory spectra of polytopes in euclidean, spherical and hyperbolic geometry, and their (higher) homotopy groups (\cite{cz-hilbert},\cite{scissors_thom},\cite{trace_map}, \cite{KKMMW1}), and Barton and Zakharevich have announced results on $K_1$ of the assembler of definable sets in an o-minimal structure \cite{talk_inna}. If these spectra can be shown to be equivalent to the $K$-theory spectrum of a squares category, then this provides an new possible approach to this subject. Moreover, this can make it easier to construct maps to and from $K$-theory spectra of Waldhausen categories, as is illustrated by Proposition \ref{propx:intro_appl}.

\subsection{Outline}

Our main result is the following.
\begin{thmx}[{Proposition \ref{prop:artificial_squares_cat_works}}]\label{thmx:comparison}
    Let $\A$ be an assembler satisfying certain axioms. Then there exists a squares category $\C^\min$ such that $K^\square(\C^\min) \simeq K(\A)$. 
\end{thmx}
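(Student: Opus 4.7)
The plan is to build $\C^{\min}$ out of $\A$ in the most economical way and then compare the defining simplicial objects of $K^\square(\C^{\min})$ and $K(\A)$, following the template of the variety case in \cite[Proposition 4.6]{squares}.

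\textbf{Construction of $\C^{\min}$.} I take the underlying category of $\C^{\min}$ to be (essentially) $\A$ itself, and declare a commutative square
\[
\begin{tikzcd}
A \arrow[r] \arrow[d] & B \arrow[d] \\ C \arrow[r] & D
\end{tikzcd}
\]
to be distinguished precisely when $\{B\to D,\, C\to D\}$ is a two-element disjoint covering family in $\A$ with intersection $A$. This is the minimal collection of distinguished squares recording the assembler's covering data; the axioms assumed on $\A$ in the theorem statement are exactly what one needs in order to verify the axioms of a squares category from \cite{squares}, notably the existence of pullbacks along covering morphisms and their compatibility with further covers.

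\textbf{Comparison map.} The spectrum $K^\square(\C^{\min})$ arises from the $T_\bullet$-construction, whose $n$-simplices are certain grids of objects of $\A$ in which every unit subsquare is distinguished; each such grid encodes an iterated binary decomposition of its maximal object. On the other hand, $K(\A)$ admits a model as the $K$-theory of a $\Gamma$-space built from finite disjoint covering families and their refinements. A grid parametrising an iterated binary decomposition maps naturally to the disjoint cover given by its ``leaves'', and this assignment upgrades to a comparison map between the simplicial models for $K^\square(\C^{\min})$ and $K(\A)$, set up in close analogy with \cite[Proposition 4.6]{squares}.

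\textbf{Main obstacle.} The bulk of the proof is then to show that this comparison map is a stable equivalence. The content is that every disjoint cover of an object by $n$ pieces can be reorganised as an iterated two-piece decomposition through a contractible space of choices, so that nothing is lost by restricting attention to binary squares. I plan to establish this by a Quillen Theorem A argument applied to the forgetful functor from iterated binary decompositions to arbitrary disjoint covers, using the assembler axioms to ensure that the nested pairwise intersections that appear are themselves covering morphisms and to construct the requisite contracting homotopies. The main technical difficulty will be in formulating the simplicial space of binary subdivisions cleanly and checking that all the diagrams that arise genuinely live in $\C^{\min}$; once that is in place, the equivalence $K^\square(\C^{\min})\simeq K(\A)$ follows.
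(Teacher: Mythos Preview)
Your construction of $\C^{\min}$ is not the one used in the paper, and as stated it has a genuine gap. You take the underlying category to be $\A$ and declare a square distinguished when $\{B\to D,\,C\to D\}$ is a two-element disjoint covering family with ``intersection $A$''. But in an assembler the covering families are disjoint by definition, so the pullback $B\times_D C$ is always $\emptyset$; your non-degenerate distinguished squares are thus forced to have $A=\emptyset$, and closure under vertical or horizontal composition already fails. More seriously, an arbitrary morphism $A\to B$ in a general assembler need not have any complement: there is no object $B\setminus A$ with $\{A\to B,\,B\setminus A\to B\}$ covering. Without complements you cannot pass from the $T_\bullet$-construction to an $S_\bullet^\square$-type staircase, and your proposed ``leaves'' of a grid simply do not exist in $\A$. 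The variety argument you are modelling works precisely because open immersions have closed complements; that feature is absent in a bare assembler.

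The paper's $\C^{\min}$ is built quite differently, and the key move is the one you are missing: one of the axioms on $\A$ is that it sits inside an ambient category $\D$ with (restricted) coproducts. The horizontal morphisms of $\C^{\min}_\D$ are then the \emph{coproduct inclusions} $\amalg_{I_0}A_i\hookrightarrow\amalg_I A_i$, whose complements exist trivially as $\amalg_{I\setminus I_0}A_i$; the vertical morphisms are coproducts of maps coming from (subsets of) covering families. This makes $\C^{\min}_\D$ a squares category with complements in the sense of the paper, so $|T_\bullet|\simeq|S^\square_\bullet|$ holds. The comparison with $K(\A)$ is then made at the level of $S^\square_\bullet$: the coproducts allow one to define a simplicial functor $\W(\A)^\bullet\to S^\square_\bullet\C^{\min}_\D$ sending a tuple $(\{A_i\}_{I_1},\dots,\{A_i\}_{I_n})$ to the staircase with entries $\amalg_{I_k\sqcup\cdots\sqcup I_l}A_i$, and Quillen's Theorem~A shows it is a levelwise equivalence. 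Your plan to use Theorem~A is on target, but the functor you need cannot be written down until the squares category contains the coproducts $\amalg_I A_i$ as honest objects.
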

The axioms in question are few, and they are fulfilled in the geometric examples that we care about: polytopes in an $n$-dimensional geometry $X$, and definable sets.

The process of constructing an equivalence between the $K$-theory spectrum of a squares category and the $K$-theory spectrum of an assembler, consists of two steps. By definition, the $K$-theory spectrum of a squares category is defined via the $T_\bullet$-construction. An ``$S^\square_\bullet$-construction'', analogous to the $S_\bullet$-construction of a Waldhausen category, can also be defined, as is done in \cite[Definition 2.20]{Calle_Sarazola}. In Section \ref{sect:on_s_bullet}, we define a class of squares categories, called \textit{squares categories with complements}.
We then show the following.
\begin{propx}[Proposition \ref{prop:Tplus_vs_T}]\label{propx:T_vs_Ssquare}
Let $\C$ be a squares category with complements, then there is an equivalence 
$$|T_\bullet \C | \simeq |S^\square_\bullet\C|$$
\end{propx}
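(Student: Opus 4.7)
The plan is to follow the blueprint of Waldhausen's proof that $|wS_\bullet\W| \simeq |wT_\bullet\W|$ (\cite[1.6]{waldhausen}), adapting it to the square-centric setting. An $n$-simplex of $T_\bullet\C$ should be a coherent ``triangular array'' of distinguished squares indexed by the poset of subintervals of $[n]$, whereas an $n$-simplex of $S^\square_\bullet\C$ is a sequence $A_0 \to A_1 \to \cdots \to A_n$ equipped with chosen complements (and distinguished squares exhibiting them) at each step. The key input is the defining hypothesis on $\C$: it has enough complements, which allows one to translate between ``put the complement in the empty corner of a square'' and ``take a square apart as a filtration plus a complement''.

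Concretely, I would introduce an intermediate simplicial category $T^+_\bullet\C$ whose $n$-simplices package simultaneously a $T_n$-diagram and a compatible $S^\square_n$-filtration with complements, subject to a compatibility requiring that the chosen complements literally occupy the appropriate corners of the distinguished squares in the array. This comes equipped with forgetful simplicial maps
$$
T_\bullet\C \xleftarrow{\pi_T} T^+_\bullet\C \xrightarrow{\pi_S} S^\square_\bullet\C,
$$
and the strategy is to show that both $\pi_T$ and $\pi_S$ are levelwise equivalences of categories, hence equivalences after geometric realization. For $\pi_T$, the fiber over a fixed $T_n$-diagram is the category of compatible complement data extending it; the complements hypothesis guarantees non-emptiness, and uniqueness of complements up to canonical isomorphism (formulated precisely using the squares axioms) makes this fiber contractible, so Quillen's Theorem A applies. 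For $\pi_S$ the argument is analogous: once a filtration with complements is fixed, the rest of the $T$-array is forced up to contractible choice, since every additional distinguished square is prescribed by a complement relation already present in the filtration.

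The main obstacle will be making the slogan ``the space of complements is contractible'' precise. The axiomatisation of squares categories with complements needs to ensure not only that complements exist and are unique up to isomorphism, but that the category of complements of a given inclusion is actually contractible, for instance by exhibiting a terminal object or by a zig-zag connectedness argument using the distinguished squares. Equally delicate will be arranging the face and degeneracy maps of $T^+_\bullet\C$ so that they are strictly compatible with both projections rather than only up to natural isomorphism, and verifying that the interplay between the complement data and the square data is respected by all simplicial structure maps coming from the poset of subintervals of $[n]$. Once those compatibilities are in place, the two Theorem A applications should assemble routinely into the desired equivalence $|T_\bullet\C| \simeq |S^\square_\bullet\C|$.
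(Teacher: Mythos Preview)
Your architecture is exactly the paper's: introduce the intermediate simplicial category $T^+_\bullet\C$ and factor through it via forgetful maps to $T_\bullet\C$ and $S^\square_\bullet\C$. The difference is in how the two levelwise equivalences are established, and here the paper is more direct than your plan.

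For $\pi_S: T^+_\bullet\C \to S^\square_\bullet\C$ (delete the top row), no complement hypothesis is needed at all: this holds for \emph{any} squares category. The section $H_n$ simply repeats the top row of an $S^\square_n$-diagram, so $\pi_S \circ H_n = \mathrm{id}$, and there is an evident natural transformation $H_n \circ \pi_S \Rightarrow \mathrm{id}$ given by the vertical maps already present in the $T^+_n$-diagram. No Theorem~A, no contractibility argument. You are overcomplicating this direction.

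For $\pi_T: T^+_\bullet\C \to T_\bullet\C$ (keep only the top row), the paper avoids Theorem~A because the axioms for a squares category with complements specify a \emph{canonical} complement object $B\setminus A$ for each horizontal map $A \rightarrowtail B$, not merely an existence statement. This canonicity, together with the functoriality and excision axioms \ref{ax:compl_functorial}--\ref{ax:dist_and_excision}, lets one write down an explicit functor $F_n: T_n\C \to T^+_n\C$ filling in row $k$ with the objects $A_j\setminus A_k$, and then exhibit a natural transformation $\mathrm{id} \Rightarrow F_n \circ \pi_T$ (componentwise weak equivalences $A_{ij} \to A_j\setminus A_i$). Your worry about making ``the space of complements is contractible'' precise is dissolved by the axiomatization: the canonical complement is a chosen point, and the axioms are arranged precisely so that $F_n$ is a functor and the natural transformation exists. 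The bulk of the work is checking that the squares produced by $F_n$ are genuinely distinguished and that the naturality squares commute, which is where axioms \ref{ax:compl_excision}, \ref{ax:sufficient_for_dist}, and \ref{ax:dist_and_excision} earn their keep.

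So your plan would succeed, but both halves admit explicit homotopy inverses and Theorem~A is never invoked.
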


Secondly, we compare the $S^\square_\bullet$-construction to the construction of the $K$-theory spectrum of an assembler. This is essentially what is done in \cite[Proposition 9.13]{devissage}, where the $K$-theory of the SW-category of varieties, obtained from Campbell's $\widetilde S_\bullet$-construction, is shown to be equivalent to the $K$-theory spectrum of the assembler of varieties. In Section \ref{sect:s_bullet_vs_assembler} we give an alternative argument that is very general. More precisely, we show the following.
\begin{propx}[Proposition \ref{prop:assmebler_K_th_is_squares_K_th}]\label{propx:K_square_vs_assembler}
Let $\C$ be a squares category with complements and $\A$ a category with covering families, that share an ambient category $\D$. Assume a number of conditions. Then there is an equivalence 
$$K^\square(\C) \simeq K(\A).$$
\end{propx}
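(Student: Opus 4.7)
The plan is to reduce the statement to a comparison at the level of the $S^\square_\bullet$-construction. By Proposition \ref{propx:T_vs_Ssquare}, we may replace $K^\square(\C) = |T_\bullet \C|$ by $|S^\square_\bullet \C|$, so the task becomes exhibiting an equivalence $|S^\square_\bullet \C| \simeq K(\A)$. On the assembler side, $K(\A)$ is built by Zakharevich from a simplicial (or $\Gamma$-) object whose $n$-th level parametrizes $n$-tuples of finite disjoint covering families, with morphisms given by refinements. My goal is therefore to promote the shared ambient category $\D$ into a bridge between the simplicial object $S^\square_\bullet \C$ and this assembler model.

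First I would make explicit what an $n$-simplex of $S^\square_\bullet \C$ encodes: namely a chain of morphisms $A_0\to A_1\to\cdots \to A_n$ in $\C$, together with coherent distinguished squares exhibiting every subquotient $A_i\to A_j$ as possessing a chosen complement $C_{i,j}$, in the sense of a squares category with complements. Using the hypothesis that $\C$ and $\A$ sit inside a common ambient category $\D$, and that the covers of $\A$ correspond to complement-squares of $\C$, each such $n$-simplex produces in a functorial way a finite disjoint covering family $\{C_{0,1},C_{1,2},\dots,C_{n-1,n}\}$ of $A_n$ in $\A$. This yields a simplicial map
\[
\Phi\colon S^\square_\bullet \C \;\longrightarrow\; W_\bullet(\A)
\]
to (a suitable simplicial model of) the assembler $K$-theory, where $W_\bullet(\A)$ encodes iterated refinements of covering families.

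Second, I would show that $\Phi$ is a levelwise weak equivalence after realization. The main step is that, at each simplicial degree, the fibre of $\Phi$ over a given decomposition is the space of flags in $\C$ whose successive complements realize that decomposition; the assumed axioms on $\C$ (existence and essential uniqueness of complements up to the distinguished squares) and the compatibility with $\A$ through $\D$ force this space to be contractible. Concretely, one filters by the number of pieces and argues inductively that adding one more piece to a decomposition amounts to choosing an embedding into the current total object, which by the complement axioms lives in a contractible category. An alternative route, closer to \cite[Proposition 9.13]{devissage}, is to introduce a bisimplicial object relating $S^\square_\bullet \C$ and $W_\bullet(\A)$ and to verify that both projections are equivalences on realizations by a Quillen Theorem A–type argument.

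The main obstacle I anticipate is the passage from \emph{ordered} flags in $S^\square_\bullet \C$ to \emph{unordered} covering families in $\A$: the assembler does not see an order on the pieces of a decomposition, but the $S^\square_\bullet$-construction is inherently sequential. This is exactly where the contractibility of the fibres of $\Phi$ has to do genuine work, by exhibiting the space of orderings of a fixed decomposition as a contractible simplicial set of admissible flags. Once this point is settled, the equivalence $|S^\square_\bullet \C|\simeq K(\A)$ follows by realization and, combined with Proposition \ref{propx:T_vs_Ssquare}, yields $K^\square(\C)\simeq K(\A)$.
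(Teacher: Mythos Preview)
Your overall strategy---reduce to $S^\square_\bullet$ and compare with the assembler simplicial object via Quillen's Theorem A---matches the paper, but the comparison map you propose goes in the wrong direction and, as written, is not simplicial. You send a staircase $(A_{ij})$ in $S^\square_n\C$ to the $n$-tuple of singletons $(\{A_{01}\},\{A_{12}\},\dots,\{A_{n-1,n}\})$ in $\W(\A)^n$. Now compute an inner face map: on $S^\square_\bullet\C$, the map $d_k$ deletes the $k$-th row and column, so the new subdiagonal contains the \emph{single} object $A_{k-1,k+1}$; on $\W(\A)^\bullet$, the map $d_k$ concatenates the $k$-th and $(k{+}1)$-st slots into the \emph{two-element} tuple $\{A_{k-1,k},A_{k,k+1}\}$. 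These do not agree, and there is no natural way to force $A_{k-1,k+1}$ to equal the formal pair $\{A_{k-1,k},A_{k,k+1}\}$ in $\W(\A)$. This is not the ``ordered versus unordered'' issue you flagged; it is a genuine mismatch between single objects in $\C$ and finite tuples in $\W(\A)$.

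The paper resolves this by reversing the direction: it assumes a symmetric monoidal structure $\amalg$ on the ambient category $\D$ (one of the unspecified ``number of conditions'') and defines a simplicial functor
\[
G_\bullet\colon \W(\A)^\bullet \longrightarrow S^\square_\bullet\C,
\qquad
(\{A_i\}_{I_1},\dots,\{A_i\}_{I_n}) \longmapsto \bigl(\,\textstyle\coprod_{I_{i+1}\sqcup\cdots\sqcup I_j} A_i\,\bigr)_{0\le i<j\le n}.
\]
The coproduct is exactly what makes the inner face maps match: concatenating tuples on the assembler side corresponds to taking $\amalg$ on the squares side. The remaining conditions guarantee that the resulting squares are distinguished (the induced maps on complements are weak equivalences built from covers). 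The equivalence is then established by Quillen's Theorem A, showing each comma category $G_n/(A_{ij})$ is filtered: nonemptiness uses that complements in $\C$ give covering families in $\A$, and the cofiltered property uses common refinements of covers. Your ``contractible fibre of orderings'' heuristic and your bisimplicial alternative are in the right spirit, but the actual argument hinges on the monoidal structure to get a map at all.
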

We prove Theorem \ref{thmx:comparison} in Section \ref{subsect:assemblers} by constructing, given an assembler $\A$, a ``minimal'' squares category with complements $\C^\min$, and showing that the conditions of Proposition \ref{propx:K_square_vs_assembler} are fulfilled.
 
In the rest of Section \ref{sect:examples} we treat a few geometric examples. The minimal squares category that is constructed in the proof of Theorem \ref{thmx:comparison}, is rather artificial. In particular, it is different from the squares category of varieties, polytopes, or definable sets that one would naturally consider. 
However, in the cases of interest, we can construct other, more natural looking squares categories with complements, that also satisfy the conditions of Proposition \ref{propx:K_square_vs_assembler}. Moreover, we can compare these squares categories to ones appearing in the literature. This gives the following results.
\begin{propx}[{Corollary  \ref{cor:K_square_def_vs_K_pCGW_def} and Proposition \ref{prop:squares_cat_CS_agrees}}]
    \begin{itemize}
        \item[(1)]  The $K$-theory spectrum $K(\defin(R))$ of the assembler of definable sets is equivalent to the $K$-theory spectrum of the pCWG-category of definable sets defined in \cite[Example 1.13]{Ming}.
        \item[(2)]  The $K$-theory spectrum $K(\P^G_{E_n})$ of the assembler of euclidean $n$-dimensional polytopes is equivalent to the $K$-theory spectrum of the squares category $\mathbb{P}^n_G$ defined in \cite[Example 2.14]{Calle_Sarazola}.   \end{itemize}
\end{propx}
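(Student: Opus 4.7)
The plan is to treat both statements via the same recipe: in each geometric setting, I construct a natural squares category with complements $\C$ (different from the artificial $\C^\min$ produced by Theorem~\ref{thmx:comparison}), verify the hypotheses of Proposition~\ref{propx:K_square_vs_assembler} against the relevant assembler in order to conclude $K^\square(\C)\simeq K(\A)$, and then identify $\C$ with the literature construction.

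For (1), I would let $\C$ be the squares category whose objects are definable sets over $R$, whose morphisms are definable inclusions, and whose distinguished squares are those pushout-complement squares coming from a definable inclusion $U\hookrightarrow Z$ together with its complement $Z\setminus U$. The ambient category $\D$ is the category of definable sets with all definable inclusions; the covering families of $\defin(R)$ sit inside $\D$ via the obvious forgetful functor. The conditions of Proposition~\ref{propx:K_square_vs_assembler} should follow from the existence and definability of complements of definable inclusions together with cell decomposition in an o-minimal structure. To compare $\C$ with the pCGW-category of \cite[Example 1.13]{Ming}, I would match the two distinguished classes of morphisms (monic and epic) in the pCGW-structure with the inclusions and complement-selecting arrows in $\C$; the distinguished squares on both sides are then visibly the same geometric data, so a general comparison between squares $K$-theory and pCGW $K$-theory (or a direct matching of the $T_\bullet$- and $S_\bullet$-constructions) yields the equivalence.

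For (2), I would take $\C$ to be the squares category whose objects are polytopes in $n$-dimensional euclidean geometry, whose morphisms are polytopal inclusions, and whose distinguished squares correspond to expressing a polytope $D$ as the union of two subpolytopes $B,C$ along their intersection $A$ of lower dimension. Again Proposition~\ref{propx:K_square_vs_assembler} applies with the ambient category being polytopes with polytopal inclusions, and the covering families of $\P^G_{E_n}$ realised by these two-piece decompositions after refining via a common polytopal subdivision. This gives $K^\square(\C)\simeq K(\P^G_{E_n})$, and it then suffices to produce a squares-functor $\C\to \mathbb{P}^n_G$ inducing a bijection on distinguished squares; since the Calle--Sarazola category $\mathbb{P}^n_G$ of \cite[Example 2.14]{Calle_Sarazola} is built from the same geometric covering data, this comparison is essentially a matching of conventions which induces an equivalence on $T_\bullet$.

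The main obstacle in both cases is the verification of the ``certain conditions'' in Proposition~\ref{propx:K_square_vs_assembler}: these conditions involve both the compatibility of covering families with complements in a common ambient $\D$, and some control on how an arbitrary covering family can be refined by successive two-piece decompositions. For definable sets this is controlled by cell decomposition, and for polytopes by simultaneous polytopal subdivision; in each setting one must check that the refinement process respects the equivariant structure (in the polytope case) and identifies the correct class of distinguished squares. A secondary obstacle, specific to (1), is the comparison between squares $K$-theory and pCGW $K$-theory on the category of definable sets, which requires either invoking a general comparison result or producing a direct zig-zag via the $S^\square_\bullet$-construction of Proposition~\ref{propx:T_vs_Ssquare}.
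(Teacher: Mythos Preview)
Your overall strategy — build a natural squares category with complements $\C$, verify the hypotheses of Proposition~\ref{propx:K_square_vs_assembler} to get $K^\square(\C)\simeq K(\A)$, then compare $\C$ to the literature construction — is exactly the paper's strategy. But in both parts you underestimate the final comparison step, and in~(2) this is a genuine gap.

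For~(2), the squares category you describe (polytopal inclusions as both classes of morphisms, distinguished squares from two-piece decompositions) is essentially the Calle--Sarazola category $\mathbb{P}^n_G$ itself. This category does \emph{not} satisfy the hypotheses of Proposition~\ref{propx:K_square_vs_assembler}: condition~\ref{ax:covering_fam_is_weq} requires that a covering family $\{P_i\to P\}$ induce a weak equivalence $\amalg_i P_i \to P$, but the weak equivalences in $\mathbb{P}^n_G$ are only global isometries $g$ with $gP=Q$, and the map assembled from a multi-piece cover is not of this form. The paper handles this by first enlarging the ambient category to $\D^G_X$ (Definition~\ref{defn:ambient_polytope_cat}), whose morphisms are piecewise isometries, and defining an intermediate squares category $\C^G_{E_n}$ with $\E=\D^G_X$; for this larger category covering families do give weak equivalences, so Proposition~\ref{propx:K_square_vs_assembler} applies. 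The inclusion $S^\square_\bullet\mathbb{P}^n_G\hookrightarrow S^\square_\bullet\C^G_{E_n}$ is then shown to be essentially surjective, and both sides are levelwise groupoids, so the realizations agree. Your ``matching of conventions'' would not produce this argument.

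For~(1), your plan is closer to correct, but the comparison with the pCGW-category is not a matter of matching distinguished squares. The pCGW $S_\bullet$-construction is a simplicial \emph{set} (objects only), whereas the squares $S^\square_\bullet$-construction is a simplicial \emph{category}. The bridge is the observation that in $\C_{\defin(R)}$ the weak equivalences are precisely the definable bijections, hence isomorphisms, so every $S^\square_n\C_{\defin(R)}$ is a groupoid; a standard lemma (Lemma~\ref{lem:simplicial_set_vs_cat}, after \cite[Lemma~1.4.1]{waldhausen}) then identifies the realization of the simplicial category with that of its simplicial set of objects. You gesture at ``a general comparison result'' but do not isolate this groupoid collapse, which is the actual content of the argument and is also what drives the comparison in part~(2).
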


\subsection{Application}
Finally, in Section \ref{sect:application}, we consider the squares category of definable sets in an o-minimal structure. For $X$ a definable set in an o-minimal structure on $(R,<)$, one can write $X$ as a finite disjoint union of cells $C_i$, each definably homeomorphic to $R^n$ for some $n \geq 0$. This $n$ is called the dimension of $C_i$, and $\dim(X)$ is defined to be the maximum of the dimensions of the cells in any cell decomposition of $X$. Given a cell-decomposition $X= \bigcup_I C_i$, the \textit{definable Euler characteristic} of $X$ is defined as 
$$\chi(X) = \sum_{i\in I} (-1)^{\dim(C_i)}.$$
This is independent of the choice of cell-decomposition. For $D = B \cup C$, it is clear that $\chi(D) = \chi(B)+\chi(C) - \chi(B\cap C)$. Moreover, if there exist a definable bijection $X\to Y$, then we have $\chi(X) = \chi(Y)$. We refer to \cite{Dries} for more details.

The basic properties of $\chi$ imply that there is a well-defined map 
$$\chi:K_0(\defin(R)) \to \Z.$$ A natural question is then whether can we lift this map to a map 
$$K(\defin(R)) \to \mathcal{Z},$$
where $\mathcal{Z}$ is some spectrum with $\pi_0(\mathcal{Z}) = \Z$, possibly also a $K$-theory spectrum. With the description of $K(\defin(R))$ as the $K$-theory spectrum of a squares category, this turns out to be relatively straightforward.

In \cite[Section 5]{manifolds} a map of $K$-theory spectra $K^\square(\textup{Mfd}_n^\partial) \to K(\Z)$ out of the $K$-theory of a squares category of manifolds with boundary is defined, lifting the Euler characteristic of manifolds with boundary. We can apply the same arguments as those used in loc.cit, to show the following.
\begin{propx}[{Corollary \ref{cor:map_of_spectra_def_ch} and Proposition \ref{prop:map_lifts_eulerchar}}]\label{propx:intro_appl}
    There is a map of $K$-theory spectra 
    $$K(\A_{\defin(R)}) \to K(\Z/2)$$ which, on $\pi_0$, agrees with the definable Euler characteristic
    $$ \chi^\defin:\textup{Obj}(\defin(R))\to \Z.$$
\end{propx}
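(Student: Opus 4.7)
The plan is to apply the strategy of \cite[Section 5]{manifolds} essentially verbatim, using the equivalence $K(\A_{\defin(R)})\simeq K^\square(\C)$ provided by Theorem \ref{thmx:comparison} together with the comparison results of Section \ref{sect:examples}, where $\C$ is a natural squares category of definable sets with complements. The problem then reduces to exhibiting a squares functor $F\colon \C \to \W$ into a squares category $\W$ with $K^\square(\W)\simeq K(\Z/2)$, whose effect on $K_0$ matches the definable Euler characteristic.

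For $\W$ I would take the Waldhausen (hence squares) category of bounded chain complexes of finite-dimensional $\Z/2$-vector spaces, equipped with quasi-isomorphism equivalences, so that $K^\square(\W)\simeq K(\Z/2)$ with $\pi_0=\Z$ recorded by the graded Euler characteristic of the complex. Following the manifolds template, I would assign to a definable set $X$ the complex supported on the finite set of cells of a chosen cell decomposition $X=\bigsqcup_i C_i$, with each cell $C_i$ of dimension $n$ contributing a single copy of $\Z/2$ placed in degree $n$; then $\chi(F(X))=\sum_i(-1)^{\dim C_i}=\chi^{\defin}(X)$. The invariance of $\chi^{\defin}$ under the choice of cell decomposition ensures the object-level assignment is well-defined up to canonical equivalence in $\W$.

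The main obstacle is to upgrade this object-level assignment to an honest squares functor: one must verify functoriality on morphisms (open definable inclusions with definable complement) and, crucially, preservation of distinguished squares. Both issues reduce to the existence of common refinements of cell decompositions in o-minimal structures. Given a distinguished square with corners $A,B,C,D$ (with arrows $A\to B$, $A\to C$, $B\to D$, $C\to D$) satisfying $B\cup C = D$ and $B\cap C = A$, one chooses a cell decomposition of $D$ adapted to the subsets $A,B,C$, so that cells of $D$ partition compatibly into cells of $B$, of $C$, and of $A$. Under such a coherent choice, $F(A)\oplus F(D) \cong F(B)\oplus F(C)$ holds on the nose, which is exactly the four-term relation required of a squares functor.

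Once $F$ is constructed, functoriality of the $T_\bullet$-construction immediately produces the desired map $K(\A_{\defin(R)})\simeq K^\square(\C) \to K^\square(\W)\simeq K(\Z/2)$. The $\pi_0$-identification is automatic: the class of a definable set $X$ is sent to $\chi(F(X)) = \chi^{\defin}(X)$ by construction, so the induced homomorphism on $K_0$ agrees with $\chi^{\defin}$ on generators and hence everywhere.
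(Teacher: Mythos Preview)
Your proposal has a genuine gap at the level of the functor construction, and the fix is essentially what the paper does.

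First, your object-level assignment is not well-defined up to quasi-isomorphism. If to $X=\bigsqcup_i C_i$ you attach the complex $\bigoplus_i \Z/2[\dim C_i]$ with zero differential, then two cell decompositions of the same $X$ typically give non-quasi-isomorphic complexes (e.g.\ $R$ as a single $1$-cell versus $R=(-\infty,0)\sqcup\{0\}\sqcup(0,\infty)$). Invariance of $\chi^{\defin}$ only tells you the Euler characteristics agree, not that there is a ``canonical equivalence'' in $\W$; so no amount of common-refinement bookkeeping will promote this to a functor.

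Second, even granting a construction on objects, what a squares functor must preserve is the \emph{distinguished square}, not the four-term relation $[A]+[D]=[B]+[C]$. In the Waldhausen squares category $\ch^{\hb}_{\Z/2}$ a distinguished square is a homotopy pushout: you need actual maps $F(C)\to F(A)$, $F(C)\to F(D)$, $F(A)\to F(B)$, $F(D)\to F(B)$ making a commuting square with $F(A)\cup_{F(C)}F(D)\xrightarrow{\sim} F(B)$. The isomorphism $F(A)\oplus F(D)\cong F(B)\oplus F(C)$ is neither this condition nor a substitute for it.

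The paper resolves both issues by replacing your ad hoc cellular complex with the \emph{canonical} chain complex $CF_*(X)$ of $\Z/2$-valued constructible functions, which carries a genuine differential $\partial(1_C)=1_{\partial C}$ and requires no choices. Crucially, $CF_*$ has a pushforward $j_!$ along closed embeddings and a pullback $i^*$ along open embeddings. To exploit this variance the paper first passes to the sub-assembler of \emph{locally closed} definable sets (via d\'evissage) and then to the squares category $\widetilde{\C}_{\defin(R)^{\lc}}$ whose horizontal maps are definable closed embeddings and vertical maps are definable open embeddings, with distinguished squares the honest cover squares $j(B)\cup g(C)=D$. For such a square, $0\to CF_*(C)\xrightarrow{j_!}CF_*(X)\xrightarrow{i^*}CF_*(X\setminus C)\to 0$ is short exact, so the image square is a pushout in $\ch^{\hb}_{\Z/2}$ on the nose. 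This yields an actual simplicial functor $S^\square_\bullet\widetilde{\C}_{\defin(R)^{\lc}}\to S^\square_\bullet\ch^{\hb}_{\Z/2}$, and the $\pi_0$ identification then follows from the known formula $\chi^{\defin}(X)=\sum_i(-1)^i\dim CH_i(X)$ for locally closed $X$.
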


\subsection{Acknowledgements}
I thank Robin Sroka for insightful conversations that eventually led to this work. This work was partially supported by a grant from the Knut and Alice Wallenberg foundation.

\section{Preliminaries}

We recall the definition of a squares category and its $K$-theory.
\begin{defn}[{\cite[Definition 1.4]{squares}}]
    A \textit{squares category} $\C = (\E,\M)$ consists of categories $\E$ (also called the vertical morphisms, denoted $\twoheadrightarrow$) and $\M$ (also called the horizontal morphisms, denoted $\rightarrowtail$) which have the same objects, and a choice of base point object $O$, together with a collection of diagrams
    \begin{equation}\label{eq:dist_square}
         \begin{tikzcd}
            A \arrow[r, "f", tail] \arrow[d, "h", two heads] & B \arrow[d, "j", two heads] \\
            C \arrow[r, "g", tail] & C
        \end{tikzcd}  
    \end{equation}
   with $f,g$ in $\M$ and $h, j$ in $\E$,
    called \textit{distinguished squares},
    such that the following hold.
    \begin{itemize}
        \item The distinguished squares are closed under horizontal and vertical composition.
        \item For every $f:A\twoheadrightarrow B$ in $\E$ and every $g:C\rightarrowtail D$ in $\M$, the squares
        \begin{center}
            \begin{tikzcd}
                A \arrow[d, "f", two heads] \arrow[r, "="] & A \arrow[d, "f", two heads]\\
                B\arrow[r, "="]& B
            \end{tikzcd}
            and \begin{tikzcd}
                C \arrow[r, "g", tail]\arrow[d, "="] & D \arrow[d, "="]\\
                C \arrow[r,  "g", tail] & \D
            \end{tikzcd}
        \end{center}
        are distinguished.
        \item The object $O$ is initial in both $\E$ and $\M$. 
    \end{itemize}
\end{defn}
We note that a squares category can be seen as a simple double category, i.e., a double category where the 2-cells are uniquely determined by their boundary. 

Squares categories generalize Waldhausen categories as follows.
\begin{ex}[{\cite[Example 1.8]{squares}}] \label{ex:wald_square}
    Given a Waldhausen category $\W$, we define a squares category $\C_\W = (\E_\W,\M_\W)$ with $\E_\W = \W^\op$ and $\M_\W$ the category of cofibrations in $\W$, with $0$ the distinguished object. A square 
    \begin{center}
        \begin{tikzcd}
            A \arrow[r, hook] & B \\
            C \arrow[u] \arrow[r, hook]& D  \arrow[u]
        \end{tikzcd}
    \end{center}
    is distinguished if it commutes as a square in $\W$, and the induced map $A \cup_C D \to B$ is a weak equivalence.
\end{ex}

In many cases, $\E$ and $\M$ are wide subcategories of a category $\D$, and the distinguished squares are commutative squares in $\D$. In that case we say that $\D$ is the \textit{ambient category} of the squares category $\C$. 
\begin{defn}[{\cite[Definition 2.1]{squares}}]
  For $\C=(\E,\M)$ a squares category and $\D$ an ordinary category, we denote by $\hfun(\D,\C)$ the category whose objects are functors $F:\D\to \M$, and whose morphisms $F\to G$ are given by a choice of vertical morphism $\alpha_X:F(X)\twoheadrightarrow G(X)$ for every $X$ in $\D$, such that for every $f:X\to Y$ in $\D$ the square
\begin{center}
    \begin{tikzcd}
        F(X) \arrow[r, "F(f)", tail] \arrow[d, "\alpha_X",two heads] & F(Y) \arrow[d, "\alpha_Y", two heads] \\
        G(X) \arrow[r, "G(f)",tail] & G(Y)
    \end{tikzcd}
\end{center}
is distinguished.  
\end{defn}
We denote by $T_n\C$ the category $\hfun([n],\C)$. Then the categories $T_n\C$ assemble into a simplicial category $T_\bullet \C$, whose face and degeneracy maps are given by composing and inserting identities respectively, as in the nerve construction. 
\begin{defn}[{\cite[Definition 2.2]{squares}}]
    Given a squares category $\C=(\E, \M)$, we define its $K$-theory space
    $$K^\square(\C) = \Omega_O|N_\bullet T_\bullet \C|.$$
\end{defn}
For $\W$ a Waldhausen category, $T_n \C_\W$ is the opposite category of $wT_n \W$ as defined in \cite[Section 1.3]{waldhausen}, and arguments in loc. cit. imply that $K^\square(\C_\W)$ is equivalent the the Waldhausen $K$-theory space as defined via the $S_\bullet$-construction.\\

A squares category $\C = (\E,\M)$ is called a \textit{symmetric monoidal squares category} \cite[Definition 1.6]{squares} if there are symmetric monoidal structures on $\E$ and $\M$ which agree on objects and for which $O$ is the unit, such that the product of distinguished squares is a distinguished square. If $\C$ is a symmetric monoidal squares, category, then  $K^\square(\C)$ is an infinite loop space by \cite[Theorem 2.5]{squares}. In practice, the squares categories that we consider will all be symmetric monoidal, with the symmetric monoidal structure given by the coproduct or restricted pushout over $O$.

\begin{defn}\label{defn:rest_coprod}
   For $\C$ a category with an initial object $O$ and objects $A,B$, the \textit{restricted pushout over $O$} (also called the \textit{restricted coproduct}) is the initial object $X$ which fits into a pullback diagram
    \begin{center}
        \begin{tikzcd}
            O \arrow[r] \arrow[d] & A \arrow[d] \\
            B \arrow[r] & X.
        \end{tikzcd}
    \end{center}
    As an example, in the category of finite sets and injections, the restricted pushout of $A$ with $A$ over $\emptyset$ is the disjoint union $A\amalg A$, but this is not the coproduct, since the fold map $A\amalg A \to A$ is not an injection. See also e.g. \cite[Definition 5.3]{devissage}.
\end{defn}
The following proposition shows that $K^\square_0(\C)$ usually has an easy characterization. This also implies that for many assemblers, including the assemblers of polytopes and definable sets, it is straightforward to define a squares category whose $K_0$-group agrees with the assembler $K_0$-group. \begin{prop}[{\cite[Theorem 3.1]{squares}}]
    Let $\C$ be a squares category with the additional property that for any two objects $A,B$, there is a third object $X$ such that there are distinguished squares
    \begin{center}
        \begin{tikzcd}
         O \arrow[r]\arrow[d]& A \arrow[d, two heads] \\
         B \arrow[r, tail] & X
        \end{tikzcd} and \begin{tikzcd}
            O \arrow[r]\arrow[d] & B \arrow[d, two heads] \\
            A \arrow[r, tail] & X
        \end{tikzcd}.
    \end{center}
    Then $K_0(\C) = \pi_0 K^\square(\C)$ is given by $\Z\{\textup{Obj}(\C)\}/\sim$
    where $\sim$ is the relation that $[O] = 0 $ and $[A] + [C] = [B] + [C]$ for every distinguished square of the form (\ref{eq:dist_square}). 
\end{prop}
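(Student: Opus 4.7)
The plan is to compute $\pi_0 K^\square(\C) = \pi_1(|N_\bullet T_\bullet \C|, O)$ by a direct cellular analysis of the 2-skeleton of the bisimplicial set $N_\bullet T_\bullet \C$. First I will exploit that $O$ is initial in $\E$, which makes $B\E = |N_\bullet T_0 \C|$ contractible; choosing for each object $A$ the unique vertical edge $O \twoheadrightarrow A$ gives a coherent family of paths $\pi_A$ from $O$ to $A$ inside $B\E$. Relative to this contraction, every 1-cell of the nerve direction becomes null-homotopic, and every 1-cell in the $T$-direction, namely an object $f: A \rightarrowtail B$ of $T_1 \C$, yields a based loop $\ell(f) := \pi_A \cdot \mu_f \cdot \pi_B^{-1}$ at $O$, where $\mu_f$ is the 1-cell corresponding to $f$. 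I set $[A] := \ell(O \rightarrowtail A)$ using the unique horizontal morphism out of $O$.

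Next I will read off the relations from the 2-cells. A horizontal composable chain $O \rightarrowtail A \xrightarrow{f} B$, viewed as a 0-simplex of $T_2 \C$, bounds a triangular 2-cell giving the cocycle identity $[B] = [A] \cdot \ell(f)$, so in the abelianised group $\ell(f) = [B] - [A]$. A distinguished square as in \eqref{eq:dist_square}, viewed as a 1-simplex of $NT_1 \C$, bounds a square 2-cell whose boundary reads $\mu_f \cdot \epsilon_j \simeq \epsilon_h \cdot \mu_g$; after trivialising the vertical sides $\epsilon_h, \epsilon_j$ via the 2-simplices $O \twoheadrightarrow A \twoheadrightarrow C$ and $O \twoheadrightarrow B \twoheadrightarrow D$ in $N\E$, this reduces to $\ell(f) = \ell(g)$, which translates by the previous identity into the four-term relation $[A] + [D] = [B] + [C]$. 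The identity square over $O$ forces $[O] = 0$. Together these data produce a surjective group homomorphism $\Z\{\textup{Obj}(\C)\}/\!\sim\; \twoheadrightarrow \pi_1(|N_\bullet T_\bullet \C|, O)$.

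Finally I will verify injectivity, which is the main obstacle. I must show that no 2-cells beyond those already listed impose further constraints: cells coming from $N_p T_q \C$ with $p+q \geq 2$ outside the above two cases only encode associativity of composition in $\E$ and $\M$, functoriality of horizontal morphisms along vertical ones, or iterated instances of the four-term relation, and a careful simplicial-identity bookkeeping argument shows that none of these cuts the group further. The additional hypothesis on the existence of disjoint coproducts $X$ enters precisely here: the two prescribed distinguished squares each reduce via the four-term relation to $[X] = [A] + [B]$, so the sum of any two generators is represented by an actual object, guaranteeing that every element of $\pi_1$ arises as a formal sum of $[A]$'s and confirming that the presentation from 2-cells is complete. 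Combining these three steps yields the claimed isomorphism.
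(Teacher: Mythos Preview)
The paper does not prove this proposition; it is quoted from \cite[Theorem~3.1]{squares} without argument, so there is no proof in the present paper to compare against. I will simply assess your argument on its own.

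Your cellular computation of $\pi_1(|N_\bullet T_\bullet\C|,O)$ via the $2$-skeleton is the right strategy, and the bookkeeping you outline is essentially correct, but you have misplaced the role of the hypothesis. The crucial point is \emph{commutativity}. From a distinguished square the $(1,1)$-cell imposes $\ell(f)=\ell(g)$, which in the a~priori non-abelian group $\pi_1$ reads $[A]^{-1}[B]=[C]^{-1}[D]$, not $[A]+[D]=[B]+[C]$. Applied to the two hypothesis squares this yields $[X]=[B]\cdot[A]$ from the first and $[X]=[A]\cdot[B]$ from the second, and it is their equality that forces $[A][B]=[B][A]$ for all objects $A,B$. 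Only after this is established does the relation $[A]^{-1}[B]=[C]^{-1}[D]$ become the additive four-term relation. Your sentence ``each reduce via the four-term relation to $[X]=[A]+[B]$'' is already written additively and therefore conflates the two genuinely different non-abelian identities whose comparison is the entire content of the hypothesis. Without the hypothesis $\pi_1$ need not be abelian, so the homomorphism out of the abelian group $\Z\{\textup{Obj}(\C)\}/\!\sim$ that you claim in your second paragraph is not even well-defined; this is where the hypothesis enters, not in the injectivity step. Likewise, your phrase ``in the abelianised group $\ell(f)=[B]-[A]$'' gives the game away: you are tacitly computing $\pi_1^{\mathrm{ab}}$ rather than $\pi_1$. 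The assertion that the hypothesis ``guarantees that every element of $\pi_1$ arises as a formal sum of $[A]$'s'' is either vacuous (every element is already a word in the $[A]^{\pm 1}$) or presupposes precisely the abelianness that still needs proof.

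Once $[A][B]=[B][A]$ is in hand, the remainder of your outline is fine: the $(0,2)$-cells give $\ell(f)=[B]-[A]$, the $(1,1)$-cells give the four-term relations, the $(2,0)$-cells trivialise the vertical loops, and higher cells do not affect $\pi_1$, so the presentation matches $\Z\{\textup{Obj}(\C)\}/\!\sim$.
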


\begin{defn}[{\cite[Definition 2.26]{Calle_Sarazola}}]
    Let $\C$ be a squares category. We call a vertical morphism $f:A\twoheadrightarrow B$ a weak equivalence if \begin{center}
        \begin{tikzcd}
           O \arrow[r] \arrow[d]& A \arrow[d, "f", two heads] \\
           O \arrow[r] & B
        \end{tikzcd}
    \end{center}
    is distinguished.
\end{defn}

Now we turn to categories with covering families and their $K$-theory. These generalize the $K$-theory of assemblers.

We recall that in a category $\C$, an initial object $\emptyset$ is called \textit{strict} if $\hom(X,\emptyset) = \emptyset$ for every $X$ not isomorphic to $\emptyset$. 
\begin{defn}[{\cite[Definition 2.1]{trace_map}}]
    A \textit{category with covering families} consists of a category $\A$ with a strict initial object $\emptyset$, and a collection of finite families of morphisms
    $$\{f_i:A_i \to A \}_{i\in I}$$ called covering families,
    such that the following hold.
    \begin{itemize}
        \item For every object $A$ in $\A$, the singleton family $\{A \xrightarrow{=} A\}$ is covering.
        \item  Given a covering family  $\{f_i:A_i \to A \}_{i\in I}$ and, for all $i\in I$, a covering family $\{G_{ij}:B_{ij} \to A_i \}_{j\in J_i}$, the family 
        $$ \{B_{ij} \xrightarrow{g_{ij}} A_i \xrightarrow{f_i} A \}_{i\in I, j\in J_i}$$ is a covering family.
        \item For any finite set $I$, the family $\{\emptyset \to \emptyset \}_{I}$ is covering.    \end{itemize}
\end{defn}
For $\A$ a category with covering families, we denote by $\A^\circ\subseteq \A$ the full subcategory on objects not isomorphic to $\emptyset$. For $X$ a pointed set, we denote by $X^\circ$ the set of non-base point elements.
\begin{defn}[{\cite[Definition 2.11]{trace_map}}]
    Let $\A$ be a category with covering families, and $X$ a pointed finite set. We define a new category with covering families $X\wedge \A$ with a strict initial object $\emptyset$, and the full subcategory of non-initial objects isomorphic to $\amalg_{x\in X^\circ} \A^\circ$. 
    This category is given the structure of a category with covering families where $\{f_i:A_i \to A \}_{i\in I}$ is covering if all the $f_i$ are in a single copy of $\A$, and form a covering family in $\A$ (the initial object is considered to be in every copy of $\A$).
\end{defn}
This construction defines for every category with covering families $\A$ a functor $(-)\wedge \A$ from finite pointed sets to categories with covering families, since a map of pointed sets $f:X\to Y$ induces a functor 
$$f\wedge \A :X\wedge \A \to Y \wedge \A$$
by re-indexing the copies of $\A$ according to $f$.

\begin{defn}[{\cite[Definition 2.12]{trace_map}}]
    Let $\A$ be a category with covering families. We define the category of covers $\W(\A)$ to be the category with 
   \begin{itemize}
       \item as objects finite tuples $\{A_i\}_{i\in I}$ of objects in $\A^\circ$, including the empty tuple,
       \item a morphism $\{A_i\}_I \to \{B_j\}_J$ is given by a function $\alpha:I\to J$ and a covering family 
       $$\{A_i \to B_j\}_{i\in\alpha^{-1}(j) }$$ for all $j\in J$
   \end{itemize}
\end{defn}
We can see $\W(\A)$ as a pointed category with as basepoint the empty tuple $\emptyset$. This makes the nerve $N_\bullet \W(\A)$ a pointed simplicial set.
\begin{defn}[{\cite[Definition 2.17]{trace_map}}]\label{defn:k_th_cov_cat}
    The $K$-theory spectrum $K(\A)$ of a category with covering families is the symmetric spectrum associated to the $\Gamma$-space
    $$X\mapsto |N_\bullet \W(X\wedge \A)|:\set \to \textup{Spaces}_*.$$
\end{defn}
  This $\Gamma$-space is special and therefore the associated spectrum is a positive $\Omega$-spectrum.

Categories with covering families generalize assemblers.
\begin{defn}
  Let $\A$ be a category with an initial object $\emptyset$. A family of morphisms $\{A_i\to A \}_{i\in I}$ is called a \textit{disjoint family} if for $i\neq j$ in $I$, the square
  \begin{center}
      \begin{tikzcd}
      \emptyset \arrow[r] \arrow[d]& A_i \arrow[d] \\
      A_j \arrow[r] & A
      \end{tikzcd}
  \end{center} is a pullback square. 
\end{defn}
\begin{defn}[{\cite[Definition 2.4]{Z-Kth-ass}}]
    An \textit{assembler} is a small category $\A$ equipped with a Grothendieck topology, that satisfies the following axioms:
    \begin{itemize}
        \item $\A$ has an initial object $\emptyset$ and the empty family covers $\emptyset$,
        \item any two disjoint covering families have a common refinement that is a disjoint covering family,
        \item every morphism in $\A$ is a monomorphism.
    \end{itemize}
\end{defn}
An assembler $\A$ can be given the structure of a category with covering families, where a finite family 
$\{f_i:A_i\to A\}$ is covering if it is a disjoint family, and it is a covering family for the Grothendieck topology (i.e., generates a covering sieve). The $K$-theory of an assembler, as defined in \cite{Z-Kth-ass}, coincides with the $K$-theory of the associated category with covering families. When we use the word assembler, we implicitly mean the associated category with covering families. 

\section{On the $S_\bullet^\square$-construction for squares categories}\label{sect:on_s_bullet}
In this section we deal with the first step in comparing assembler $K$-theory to squares $K$-theory. On the squares category side, we need to show that the squares $K$-theory spectrum can be computed with the $S^\square_\bullet$-construction. In \cite{Calle_Sarazola}, abstract criteria are given for an arbitrary squares category: if it is proto-Waldhausen then the $S^\square_\bullet$-construction computes the same $K$-theory as the $T_\bullet$-construction. Here we formulate a different set of criteria that is easy to verify in geometric examples of interest.

The following construction is used in the proof of \cite[Proposition 2.13]{squares}, and is also defined in detail in \cite[Definition 2.47]{Calle_Sarazola}. It is inspired by the $wT^+_\bullet$-construction in \cite{waldhausen}.
\begin{defn}
    Let $\C$ be a squares category. Then $T^+_n\C$ is the category whose objects are diagrams of the shape
    \begin{equation}\label{eq:obj_T_plus}
        \begin{tikzcd}
            A_0 \arrow[r, tail] & A_1 \arrow[r, tail] & A_2 \arrow[r,tail] & \dots  \arrow[r, tail] & A_n\\
            O \arrow[u, two heads] \arrow[r,tail] & A_{01} \arrow[r, tail] \arrow[u, two heads]& A_{02} \arrow[r, tail] \arrow[u, two heads] & \dots \arrow[r, tail] & A_{0n} \arrow[u, two heads]\\
            & O \arrow[r, tail] \arrow[u, two heads] & A_{12} \arrow[r, tail] \arrow[u, two heads] & \dots \arrow[r, tail] & A_{1n} \arrow[u, two heads]\\
            & & \dots \arrow[u, two heads] & \dots  & \dots \arrow[u, two heads]\\
            & & & O \arrow[r, tail] \arrow[u, two heads]   & A_{n-1,n}. \arrow[u, two heads] \\
            & & & &  O \arrow[u, two heads]
        \end{tikzcd}
    \end{equation} with all squares distinguished squares. Morphisms are given by natural transformations between such diagrams, whose components are vertical maps, such that every naturality square containing vertical and horizontal maps is distinguished, and every naturality square containing only vertical maps is a commuting square in $\E$. 
\end{defn}
We observe that for a morphism in $T^+_n\C$, all components except for those on the top row are weak equivalences, since they form a distinguished square with $O \to O$.

The categories $T^+_n \C$ for $n\geq 0$ assemble into a simplicial category, whose simplicial structure maps are like those of $T_\bullet \C$. In particular, the forgetful functors $T^+_n\C \to T_n \C$ that remember the top row of a diagram, form a simplicial functor.

A similar construction is the following.

\begin{defn}[{\cite[Definition 2.20]{Calle_Sarazola}}]\label{defn:S_square}
    For $\C$ a squares category, we define $S^\square_n \C$ to be the category whose objects are diagrams of the shape 
    \begin{center}
         \begin{tikzcd}
            O \arrow[r,tail] & A_{01} \arrow[r, tail] & A_{02} \arrow[r, tail] & \dots \arrow[r, tail] & A_{0n} \\
            & O \arrow[r, tail] \arrow[u, two heads] & A_{12} \arrow[r, tail] \arrow[u, two heads] & \dots \arrow[r, tail] & A_{1n} \arrow[u, two heads]\\
            & & \dots \arrow[u, two heads] & \dots  & \dots \arrow[u, two heads]\\
            & & & O \arrow[r, tail] \arrow[u, two heads]   & A_{n-1,n}. \arrow[u, two heads] \\
            & & & &  O.\arrow[u, two heads]
        \end{tikzcd}
    \end{center}with all squares distinguished squares. Morphisms are given by natural transformations between such diagrams, whose components are vertical maps, such that every naturality square containing vertical and horizontal maps is distinguished, and every naturality square containing only vertical maps is a commuting square in $\E$.
\end{defn}
All components of a morphism in $S_n^\square \C$ are weak equivalences, since they form a distinguished square with $O\to O$. The categories $S^\square_n \C$ assemble into a simplicial category whose simplicial structure maps are like those in the traditional Waldhausen $S_\bullet$-construction. 

The goal of this section is to show that for squares categories that satisfy some additional axioms, the space $\Omega_O|N_\bullet S^\square_\bullet \C|$ is equivalent to $K^\square(\C) = \Omega_O|N_\bullet T_\bullet \C|$. The following result is standard, and brings us half-way there.
\begin{lem}\label{lem:Tplus_vs_S_square}
    For any squares category $\C$, there is an equivalence $|N_\bullet T_\bullet^+\C|\simeq |N_\bullet S_\bullet^\square\C|$.
\end{lem}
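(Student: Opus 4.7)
The plan is to exhibit mutually inverse equivalences between the bisimplicial sets $N_\bullet T^+_\bullet \C$ and $N_\bullet S^\square_\bullet \C$, using the obvious forgetful simplicial functor together with a canonical section and a natural transformation between the composite and the identity.

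First, let $\pi: T^+_\bullet \C \to S^\square_\bullet \C$ be the simplicial functor that, in each degree $n$, deletes the top row $A_0 \rightarrowtail A_1 \rightarrowtail \cdots \rightarrowtail A_n$ from a diagram of the form (\ref{eq:obj_T_plus}), keeping only the triangular bottom half. Next, define $\sigma: S^\square_\bullet \C \to T^+_\bullet \C$ by adjoining to a $S^\square_n$-diagram a top row with $A_0 := O$ and $A_i := A_{0i}$ for $i \geq 1$, with horizontal maps inherited from the $(0,-)$-row and all connecting vertical maps from the $(0,-)$-row to the new top row set equal to identities. The squares between these two rows are then trivial identity squares, which are distinguished by the second axiom in the definition of a squares category, so $\sigma$ lands in $T^+_n \C$. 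Compatibility with face and degeneracy maps is immediate, since the operation ``duplicate the $(0,-)$-row with identity vertical maps'' commutes with deletion and insertion of indices. Clearly $\pi \circ \sigma = \mathrm{id}_{S^\square_\bullet \C}$.

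For the other composition, I construct a natural transformation $\eta: \sigma \pi \Rightarrow \mathrm{id}_{T^+_\bullet \C}$. Given a diagram $X \in T^+_n \C$ with top row $A_0, \dots, A_n$ and existing vertical connecting maps $v_i: A_{0i} \twoheadrightarrow A_i$ (using the convention $A_{0,0} := O$), declare $\eta_X: \sigma\pi(X) \to X$ to be the identity at every vertex of the triangular bottom part and equal to $v_i$ at the top-row vertex $A_i$. The naturality squares at a horizontal arrow of the top row are then precisely the distinguished squares of $X$ linking its top two rows; the naturality squares at vertical arrows $A_{0i} \twoheadrightarrow A_i$ consist entirely of morphisms in $\E$ and manifestly commute on the nose; and all remaining naturality squares are identity squares, hence distinguished. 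Thus $\eta_X$ is a morphism in $T^+_n \C$, natural in $X$, and evidently compatible with the simplicial structure maps.

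Since a natural transformation between simplicial functors induces a simplicial homotopy between the maps of nerves, $\eta$ yields a homotopy between $|N_\bullet(\sigma\pi)|$ and the identity on $|N_\bullet T^+_\bullet \C|$ after geometric realization; combined with $\pi\sigma = \mathrm{id}$, this gives the desired equivalence. The only nontrivial bookkeeping lies in checking that each of the naturality squares defining $\eta_X$ is allowed in $T^+_n \C$, and I expect this to be the main (though routine) step; it works out because the ``interesting'' squares are copied verbatim from $X$ while the rest are identity squares accounted for by the squares-category axioms.
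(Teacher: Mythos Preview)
Your approach is the same as the paper's (and ultimately that of \cite[Proposition 2.52]{Calle_Sarazola}): the forgetful map $\pi$ deleting the top row, the section $\sigma$ duplicating the $(0,-)$-row, and the natural transformation $\eta$ built from the connecting maps $A_{0i}\twoheadrightarrow A_i$. Your verification that $\eta_X$ is a morphism in $T^+_n\C$ is correct.

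There is, however, a genuine error in one claim: $\sigma$ is \emph{not} a simplicial functor. Consider $d_0$. For $Y\in S^\square_n\C$ with entries $(A_{ij})$, the object $d_0\sigma(Y)$ has top row $A_{01}\rightarrowtail A_{02}\rightarrowtail\cdots\rightarrowtail A_{0n}$ (shift the duplicated row), whereas $\sigma d_0(Y)$ has top row $O\rightarrowtail A_{12}\rightarrowtail\cdots\rightarrowtail A_{1n}$ (duplicate the new $(0,-)$-row, which is the old $(1,-)$-row). These differ in general, so your assertion that ``compatibility with face and degeneracy maps is immediate'' fails, and consequently it does not make sense to speak of $\eta$ as a simplicial natural transformation.

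The fix is standard and is precisely what the paper does: only $\pi=G_\bullet$ is claimed to be simplicial. For each fixed $n$ your $\sigma_n$ and $\eta_n$ show that $\pi_n$ is a homotopy equivalence of categories, hence $|N_\bullet\pi_n|$ is a weak equivalence of spaces. One then invokes the realization lemma for bisimplicial sets (a levelwise weak equivalence realizes to a weak equivalence) to conclude that $|N_\bullet T^+_\bullet\C|\to|N_\bullet S^\square_\bullet\C|$ is a weak equivalence. So your argument is salvaged by dropping the simpliciality claim for $\sigma$ and inserting this levelwise-to-global step.
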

\begin{proof}
    See \cite[Proposition 2.52]{Calle_Sarazola} for a more detailed proof. 
    
    For future reference, we just note that the homotopy equivalence is induced by simplicial functor which we denote in degree $n$ by
    $$G_n:T_n^+ \C \to S^\square_n \C,$$
given by deleting the top row of a diagram. For each $n$, this functor has a homotopy inverse
$$ H_n:S^\square_n\C \to T^+_n\C$$
which repeats the first row. Then $G_n\circ H_n = \textup{id}_{S^\square_n\C}$ and there exists a natural transformation $H_n\circ G_n \to \textup{id}_{T^+_n\C}$.
\end{proof}
Therefore comparing $K$-theory via the $T_\bullet$-construction to $K$-theory via the $S^\square_\bullet$-construction comes down comparing $T^+_\bullet \C$ to $T_\bullet \C$. In other words, the data of a sequence of $n$ composable horizontal morphisms should be equivalent (in a natural way) to the data of a diagram (\ref{eq:obj_T_plus}). It is therefore crucial that any diagram of the shape 
\begin{equation}\label{eq:to_be_completed}
    \begin{tikzcd}
       A \arrow[r, tail] & B \\
       O \arrow[u, two heads] 
    \end{tikzcd}
\end{equation} can be completed to a distinguished square, in a sufficiently natural way. In order to be able to extend this to a functor $T_n\C \to T_n^+\C$, we need to add a number of additional axioms, most of which are intuitive if we think of the object $B\setminus A$ completing (\ref{eq:to_be_completed}) as the set-theoretic complement of $A$ in $B$.

\begin{defn}\label{defn:squares_cat_w_complements}
    A \textit{squares category with complements} is a squares category $\C$ with ambient category $\D$, and a distinguished object $\emptyset$, which satisfies the following.
     \begin{enumerate}[label=(A\arabic*)]
   \item \label{ax:mono} Morphisms in $\mathcal{M}$ are monomorphisms 
         \item \label{ax:compl} For $f:A\to B$ in $\mathcal{M}$, there is a canonical object $B\setminus A$ (the ``complement'')\footnote{It would make sense to denote this object by $B\setminus f(A)$ since it depends on $f$ and not just $A$ and $B$. However, when $f$ is clear from context, we will often denote the complement by $B\setminus A$.} and a morphism $B\setminus A \to B$ in $\E$ such that 
         \begin{center}
             \begin{tikzcd}
                 \emptyset \arrow[r, tail] \arrow[d, two heads]& B\setminus A \arrow[d, two heads]\\
                 A \arrow[r, tail]& B
             \end{tikzcd}
         \end{center}
         is distinguished. The morphism $A\setminus \emptyset \to A$ is an isomorphism for all $A$.
          \item \label{ax:closed_under_pb} Morphisms in $\M$ are stable under pullbacks along morphisms in $\E$, and vice versa.
          \item \label{ax:dist_is_pb} Every distinguished square in $\C$ is a pullback square in $\D$.
        \item \label{ax:compl_functorial} Consider \begin{equation}\label{eq:square_in_prop}
            \begin{tikzcd}
                A\arrow[r, "f", tail] \arrow[d, "h"] & B \arrow[d, "j"]\\
                C \arrow[r, "g", tail] & D
            \end{tikzcd}
        \end{equation} a pullback square with $f,g$ in $\mathcal{M}$, and either $h, j$ in $\M$, or $h, j$ in $\E$. Then there is a unique morphism $B\setminus A \to D \setminus C$ such that the square 
        \begin{center}
            \begin{tikzcd}
                B\setminus A \arrow[r, "j'"] \arrow[d, two heads]& D \setminus C \arrow[d, two heads] \\
                B \arrow[r, "j"] & D
            \end{tikzcd} 
        \end{center} commutes, and is a pullback square. 
        \item \label{ax:compl_excision} For a square of $\M$-morphisms
        \begin{center}
            \begin{tikzcd}
                A_0 \arrow[d, "= ", tail] \arrow[r, tail] & A_1 \arrow[d, tail] \\
                A_0 \arrow[r, tail] & A_2 
            \end{tikzcd}
        \end{center}
       % (which is a pullback by \ref{ax:mono})
       the induced map $(A_2\setminus A_0)\setminus (A_1\setminus A_0) \to A_2\setminus A_1$ %that results from applying \ref{ax:compl_functorial} twice,
        is an isomorphism.  
        \item \label{ax:sufficient_for_dist} For a pullback square in $\D$ is of the form
        \begin{center}
            \begin{tikzcd}
                A \arrow[r, "f", tail]\arrow[d, "h", two heads] & B \arrow[d, "j", two heads] \\
                C \arrow[r, "g", tail] & D
            \end{tikzcd}
        \end{center}
        with $f,g$ in $\M$ and $h,j$ in $\E$, if the map $B\setminus A \to D\setminus C$ induced by \ref{ax:compl_functorial} is an isomorphism, then the square is distinguished.
        \item \label{ax:dist_and_excision} If   \begin{center}
            \begin{tikzcd}
                A \arrow[r, tail]\arrow[d, two heads] & B \arrow[d, two heads] \\
                C \arrow[r, tail] & D
            \end{tikzcd}
        \end{center}
        is distinguished, and 
        \begin{center}
            \begin{tikzcd}
                A_0\arrow[d, two heads] \arrow[r, "i", tail]& A\arrow[d, two heads] \\ C_0 \arrow[r, "j", tail] & C
            \end{tikzcd}
        \end{center}
        is a pullback square with $i,j$ in $\M$, then the induced square 
        \begin{center}
            \begin{tikzcd}
                A \setminus A_0 \arrow[r, tail]\arrow[d, two heads] & B  \setminus A_0\arrow[d, two heads] \\
                C \setminus C_0 
                \arrow[r,tail] & D \setminus C_0
            \end{tikzcd}
        \end{center}
        is distinguished. %Asking that it commutes and is a pullback can be seen as part of the condition.
    \end{enumerate}
\end{defn}
 Squares categories with complements are designed not only so that the following proposition true, but also to be applicable in geometric cases that are normally studied using assembler $K$-theory. 
\begin{prop}\label{prop:Tplus_vs_T}
    Let $\C$ be a squares category with complements, then the forgetful functor induces an equivalence $|N_\bullet T^+_\bullet \C| \simeq |N_\bullet T_\bullet\C|$.
\end{prop}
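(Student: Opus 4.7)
The plan is to show that the forgetful functor $U: T_\bullet^+ \C \to T_\bullet \C$ is a simplicial homotopy equivalence by constructing an explicit simplicial section $F: T_\bullet \C \to T_\bullet^+ \C$ via iterated canonical complements, in close analogy with the classical $wT_\bullet$ vs.\ $wT^+_\bullet$ comparison in Waldhausen's paper. The key input is that axioms \ref{ax:compl}, \ref{ax:compl_functorial}, \ref{ax:compl_excision}, and \ref{ax:sufficient_for_dist} collectively make the complement operation functorial and compatible with pullbacks.

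On objects in degree $n$, I define $F$ by sending a chain $A_0 \rightarrowtail A_1 \rightarrowtail \cdots \rightarrowtail A_n$ to the diagram with $A_{ij} := A_j \setminus A_i$ for $i < j$, as prescribed by \ref{ax:compl}. The horizontal transition maps $A_{ij} \rightarrowtail A_{ik}$ arise from applying \ref{ax:compl_functorial} to the pullback square formed by $\mathrm{id}_{A_i}$ and the inclusion $A_j \rightarrowtail A_k$ (which is a pullback because \ref{ax:mono} makes $A_j \rightarrowtail A_k$ a monomorphism). The vertical transition maps $A_{ij} \twoheadrightarrow A_{i-1,j}$ are obtained by composing the excision isomorphism $(A_j \setminus A_{i-1})\setminus(A_i \setminus A_{i-1}) \cong A_j \setminus A_i$ from \ref{ax:compl_excision} with the complement inclusion $(A_j \setminus A_{i-1})\setminus(A_i \setminus A_{i-1}) \twoheadrightarrow A_j \setminus A_{i-1}$ from \ref{ax:compl}. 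To show the resulting squares are distinguished: the leftmost-column squares (with $O$) are distinguished directly by \ref{ax:compl}; each interior square is a pullback by naturality of the constructions in \ref{ax:compl_functorial}, and the induced map on complements is an isomorphism via iterated application of excision \ref{ax:compl_excision}, so \ref{ax:sufficient_for_dist} certifies distinguishedness.

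On morphisms, a natural transformation in $T_n\C$ has, by definition, distinguished naturality squares, which by \ref{ax:dist_is_pb} are pullbacks. Applying \ref{ax:compl_functorial} in the case where $h,j$ are both in $\E$ produces the requisite components $A_j \setminus A_i \twoheadrightarrow A'_j \setminus A'_i$ and shows they assemble into a morphism of $T^+_n \C$. Simpliciality of $F$ follows from the uniqueness clauses in \ref{ax:compl_functorial} and the canonicity in \ref{ax:compl}. By construction $U \circ F = \mathrm{id}_{T_\bullet \C}$ strictly. Conversely, for any $X = (\{A_i\},\{A_{ij}\}) \in T^+_n \C$, the defining distinguished square for $A_{ij}$ is a pullback by \ref{ax:dist_is_pb}, as is the one for $A_j \setminus A_i$; the universal property produces a canonical comparison map $A_j \setminus A_i \twoheadrightarrow A_{ij}$ (a weak equivalence, as both fit into distinguished squares with $O$) which assembles into a natural transformation $F \circ U \Rightarrow \mathrm{id}_{T_\bullet^+ \C}$. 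A natural transformation realizes to a homotopy on nerves, giving the claimed equivalence.

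The main obstacle is verifying distinguishedness of every square inside $F(A_\ast)$: the diagonal squares involve complements of complements, so checking that the map induced on complements by \ref{ax:compl_functorial} agrees with the excision isomorphism \ref{ax:compl_excision}, and hence is itself an isomorphism, requires some bookkeeping with the universal property of the pullback. A secondary care point is that the comparison map $A_j \setminus A_i \twoheadrightarrow A_{ij}$ extracted from the pullback property is shown to commute with the horizontal and vertical transition maps so that it genuinely defines a morphism of $T^+_n \C$; this again reduces to the uniqueness in \ref{ax:compl_functorial}.
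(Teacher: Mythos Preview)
Your overall strategy matches the paper's proof: build $F_n$ from iterated complements, verify $U_nF_n=\mathrm{id}$, and exhibit a natural transformation between $F_nU_n$ and $\mathrm{id}$. Two points need repair.

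First, the comparison map goes the other way. From the distinguished (hence pullback, by \ref{ax:dist_is_pb}) square
\[
\begin{tikzcd}
\emptyset \arrow[r,tail]\arrow[d,two heads] & A_{ij}\arrow[d,two heads]\\
A_i \arrow[r,tail] & A_j
\end{tikzcd}
\]
the pullback universal property only tells you that $\emptyset \cong A_i\times_{A_j}A_{ij}$; it does not manufacture a map $A_j\setminus A_i \to A_{ij}$. What the axioms do give is a map in the opposite direction: apply \ref{ax:compl_functorial} (with $h,j\in\E$) to this square to obtain $A_{ij}\setminus\emptyset \to A_j\setminus A_i$, and then use the isomorphism $A_{ij}\setminus\emptyset\cong A_{ij}$ from \ref{ax:compl}. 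This yields a natural transformation $\mathrm{id}\Rightarrow F_nU_n$, not $F_nU_n\Rightarrow\mathrm{id}$. Either direction suffices for the homotopy, but your stated construction does not produce the map you claim.

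Second, when checking that $F_n$ is well-defined on morphisms you need the naturality squares
\[
\begin{tikzcd}
A_j\setminus A_i \arrow[r,tail]\arrow[d,two heads] & A_{j+1}\setminus A_i \arrow[d,two heads]\\
B_j\setminus B_i \arrow[r,tail] & B_{j+1}\setminus B_i
\end{tikzcd}
\]
to be \emph{distinguished}, not merely pullbacks. Axiom \ref{ax:compl_functorial} alone gives only that the induced square on complements is a pullback; distinguishedness here is exactly what \ref{ax:dist_and_excision} supplies, and you do not invoke it. (As a minor aside: you do not need $F$ to be simplicial---the paper only uses that each $F_n$ is a levelwise homotopy inverse to the simplicial functor $U_\bullet$.)
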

\begin{proof}
For each $n$ there is a functor
    $$U_n:T^+_n\C\to T_n\C$$ forgetting everything except the top row,
    and they assemble into a simplicial functor. We show that there is a homotopy inverse $F_n:T_n\C\to T^+_n\C$ in each degree. An object $A_0\rightarrowtail A_1 \rightarrowtail \dots \rightarrowtail A_n$ in $T_n\C$ is sent to the object
    \begin{center}
        \begin{tikzcd}
            A_0 \arrow[r, tail] & A_1 \arrow[r, tail] & A_2 \arrow[r,tail] & \dots  \arrow[r, tail] & A_n\\
            \emptyset \arrow[u, two heads] \arrow[r,tail] & A_1\setminus A_0 \arrow[r, tail] \arrow[u, two heads]& A_2\setminus A_0 \arrow[r, tail] \arrow[u, two heads] & \dots \arrow[r, tail] & A_n\setminus A_0 \arrow[u, two heads]\\
            & \emptyset \arrow[r, tail] \arrow[u, two heads] & A_2\setminus A_1 \arrow[r, tail] \arrow[u, two heads] & \dots \arrow[r, tail] & A_n\setminus A_1 \arrow[u, two heads]\\
            & & \dots \arrow[u, two heads] &   & \dots \arrow[u, two heads]\\
            & & &  &\emptyset \arrow[u, two heads]
        \end{tikzcd}
    \end{center}and a morphism 
\begin{equation}\label{eq:morphism}
\begin{tikzcd}
    A_0 \arrow[r, tail] \arrow[d, two heads] & A_1 \arrow[r, tail]\arrow[d, two heads] & \dots \arrow[r, tail]& A_n \arrow[d, two heads]\\
    B_0\arrow[r, tail] & B_1 \arrow[r, tail] &\dots \arrow[r, tail] & B_n
\end{tikzcd}
\end{equation}
is sent to the morphism with components $A_i\setminus A_j\twoheadrightarrow B_i\setminus B_j$ induced by the distinguished squares
\begin{center}
    \begin{tikzcd}
        A_i\arrow[r, tail]\arrow[d, two heads]& A_j \arrow[d, two heads] \\
        B_i \arrow[r, tail]& B_j 
    \end{tikzcd}
\end{center}
 Note that the components are in $\E$ because of \ref{ax:closed_under_pb}.

    To show that $F_n$ is well-defined on objects, we need to show that for composable $\M$-morphisms $A_i\rightarrowtail A_{i+1}\rightarrowtail A_{i+2}\rightarrowtail A_{i+3}$, the square
\begin{equation}\label{eq:naturality_sqaure}
    \begin{tikzcd}
        A_{i+2}\setminus A_{i+1} \arrow[r, tail] \arrow[d, two heads]& A_{i+3}\setminus A_{i+1} \arrow[d, two heads] \\
        A_{i+2} \setminus A_i \arrow[r, tail] & A_{i+3} \setminus A_i
    \end{tikzcd}
\end{equation}
    is distinguished. We obtain (\ref{eq:naturality_sqaure}) by applying \ref{ax:compl_functorial} to the square 
\begin{equation}\label{eq:hulp_naturality_square}
        \begin{tikzcd}
            A_{i+1}\setminus A_i \arrow[r, tail]\arrow[d, "= ", tail]& A_{i+2}\setminus A_i \arrow[d, tail] \\
            A_{i+1}\setminus A_i \arrow[r, tail] & A_{i+3} \setminus A_{i}.
        \end{tikzcd}
    \end{equation}
    where we implicitly use isomorphisms obtained by \ref{ax:compl_excision}. To see that (\ref{eq:hulp_naturality_square}) commutes, we use that the induced map on complements $A_1\setminus A_0 \to A_3 \setminus A_0$ is unique by \ref{ax:compl_functorial}. The square (\ref{eq:naturality_sqaure}) distinguished by \ref{ax:sufficient_for_dist}, where we use isomorphisms obtained by \ref{ax:compl_excision} again.

To see that $F_n$ is well-defined on morphisms, we need to show that given a morphism (\ref{eq:morphism}), for all $j<i$ the square
\begin{center}
    \begin{tikzcd}
        A_i\setminus A_j \arrow[d, two heads] \arrow[r, tail] & A_{i+1}\setminus A_j \arrow[d, two heads] \\
        B_i \setminus B_j \arrow[r, tail] & B_{i+1}\setminus B_j
        \end{tikzcd}
\end{center}
 is distinguished, and the square
\begin{center}
    \begin{tikzcd}
        A_i\setminus A_j \arrow[r, two heads]\arrow[d, two heads]& B_i\setminus B_j \arrow[d, two heads] \\
        A_i\setminus A_{j-1} \arrow[r, two heads] & B_{i} \setminus B_{j-1}
    \end{tikzcd}
\end{center} 
commutes. The first follows directly from \ref{ax:dist_and_excision}. The second square we obtain from 
\begin{equation}\label{eq:another_hulp_naturality}
    \begin{tikzcd}
        A_j\setminus A_{j-1} \arrow[r, two heads] \arrow[d, tail] & B_j\setminus B_{j-1} \arrow[d, tail] \\
        A_{i}\setminus A_{j-1} \arrow[r, two heads] & B_i \setminus B_{j-1} 
    \end{tikzcd}
\end{equation}
implicitly using isomorphisms obtained from \ref{ax:compl_excision}. That (\ref{eq:another_hulp_naturality}) commutes, follows from \ref{ax:dist_and_excision}.

    It is clear that $U_n\circ F_n = \textup{Id}_{T_n\D}$. On the other hand, let $(A_{ij})$ denote the object
      \begin{center}
        \begin{tikzcd}
            A_0 \arrow[r, tail] & A_1 \arrow[r, tail] & A_2 \arrow[r, tail] & \dots  \arrow[r, tail] & A_n\\
            \emptyset \arrow[u, two heads] \arrow[r, tail] & A_{01} \arrow[r, tail] \arrow[u, two heads]& A_{02} \arrow[r, tail] \arrow[u, two heads] & \dots \arrow[r, tail] & A_{0n} \arrow[u, two heads]\\
            & \emptyset \arrow[r, tail] \arrow[u, two heads] & A_{12} \arrow[r, tail] \arrow[u, two heads] & \dots \arrow[r, tail] & A_{1n} \arrow[u, two heads]\\
            & & \dots \arrow[u, two heads] &   & \dots \arrow[u, two heads]\\
            & & &  &\emptyset \arrow[u, two heads]
        \end{tikzcd}
    \end{center}
  in $T^+_n\C$.  Since the squares
    \begin{center}
        \begin{tikzcd}
            \emptyset \arrow[r, tail] \arrow[d, two heads] & A_{ij} \arrow[d, two heads]\\
            A_{i} \arrow[r, tail] & A_j
        \end{tikzcd}
    \end{center} are distinguished, it follows that there are morphisms $A_{ij}\twoheadrightarrow A_j \setminus A_i$ for all $i,j$. To see that these form a morphism in from $(A_{ij})$ to $F_n\circ U_n((A_{ij}))$, we need to see that each square
    \begin{center}
    \begin{tikzcd}
        A_{ij} \arrow[d, two heads] \arrow[r, tail] & A_{i,j+1} \arrow[d, two heads] \\
        A_j \setminus A_i \arrow[r, tail] & A_{j+1}\setminus A_{i}
        \end{tikzcd}
\end{center}
is distinguished, and each square
\begin{center}
    \begin{tikzcd}
        A_{i+1,j} \arrow[r, two heads]\arrow[d, two heads]& A_j\setminus A_{i+1} \arrow[d, two heads] \\
      A_{i,j} \arrow[r, two heads] & A_j \setminus A_{i}
    \end{tikzcd}
\end{center}
commutes. The first follows directly from \ref{ax:dist_and_excision}. For the second, it suffices to check that each square
\begin{center}
    \begin{tikzcd}
        A_{i+1,j} \arrow[r, two heads]\arrow[d, two heads]& A_j\setminus A_{i+1} \arrow[d, two heads] \\
      A_{j} \arrow[r, two heads] & A_j 
    \end{tikzcd}
\end{center}
commutes, since the morphism $A_j\setminus A_i \twoheadrightarrow A_i$ is a monomorphism. This follows from the fact that 
\begin{center}
    \begin{tikzcd}
        A_{i+1,j} \arrow[r, two heads] \arrow[d, two heads] & A_{i+1,j} \arrow[d, two heads]\\
        A_j \setminus A_{i+1} \arrow[r, two heads] & A_j
    \end{tikzcd}
\end{center}
commutes, which can be deduced from \ref{ax:compl_functorial}. 
\end{proof}
\begin{rmk}
In \cite{Calle_Sarazola} it is shown that the $T_\bullet$-construction and the $S_\bullet^\square$-construction give the same $K$-theory spectrum for a squares category that is {proto-Waldhausen} \cite[Definition 2.38]{Calle_Sarazola}\footnote{Note that according to their conventions, $O$ is terminal in $\E$, and what is $\mathcal{E}$ in our examples, is opposite category $\E^\op$ in their examples.}. Using our convention for the direction of vertical morphisms, the condition for a proto-Waldhausen category is (roughly) that any diagram
\begin{center}
    \begin{tikzcd}
        A \arrow[d, two heads] & \\
        C \arrow[r, tail] & B
    \end{tikzcd}
\end{center}
can be completed to a distinguished square, in a sufficiently natural way. Condition \ref{ax:compl} in the definition of a squares category with complements asks this only for the case where $A=O$, and many other assumptions are added. The conditions of a squares category with complements do not imply that a category is proto-Waldhausen; an example of a squares category with complements that is not proto-Waldhausen, is the squares category of varieties in Definition \ref{defn:squares_cat_var}. On the other hand, a proto-Waldhausen category is not necessarily a squares category with complements. 
\end{rmk}

\section{Comparing the $S_\bullet^\square$-construction to assembler $K$-theory}\label{sect:s_bullet_vs_assembler}
In this section we define the $K$-theory of a squares category $K^\square(\C)$ to the $K$-theory of category with covering families $K(\A)$.

We recall from Definition \ref{defn:k_th_cov_cat} that the $K$-theory of an assembler, or more generally a category of covering families $\A$, is a positive $\Omega$-spectrum whose space in level 1 is given by the geometric realization of the bisimplicial set $N_\bullet\mathcal{W}(S^1_\bullet \wedge \A)$ for $S^1$ the simplicial sphere. Effectively, $S^1_p\wedge \A$ contains $p$ copies disjoint copies of $\A$, and a family $\{A_i\to B \}_{i\in I}$ is covering exactly when the $A_i$'s and $B$ are in the same copy of $\A$ and not isomorphic to $\emptyset$. As a simplicial category, $\mathcal{W}(S^1_\bullet \wedge \A)$ is equivalent to the following.  
\begin{defn}
    We define $\W(\A)^\bullet$ to be the simplicial category given in degree $n$ by the cartesian product
    $$ \W(\A)^n  = \W(\A) \times \dots \times \W(\A).$$
    For an object $(\{A_i\}_{I_1}, \dots, \{A_i \}_{I_n})$ in $\W(\A)^n$, we implicitly assume that the indexing sets $I_1,\dots, I_n$ are disjoint.
    
     The face maps
    $$d_k:\W(\A)^n\to \W(\A)^{n-1}$$
    are as follows: $d_0$ and $d_n$ delete the first and the last object respectively, and $d_k$ adjoins $\{A_{i} \}_{I_k}$ and $\{A_i \}_{I_{k+1}}$ to form $\{A_i\}_{I_k\sqcup I_{k+1}}$, for $k=1, \dots, n-1$.

    The degeneracy maps
    $$s_k:\W(\A)^{n}\to \W(\A)^{n+1}$$ are as follows: $s_k$ inserts the empty tuple before $\{A_i \}_{I_{k+1}}$ for $k=0, \dots, n-1$, and $s_n$ inserts the empty tuple after $\{ A_i\}_{I_n}$. 
\end{defn}
The following is used in e.g. the proof of \cite[Proposition 9.13]{devissage}. We spell it out for the sake of being self-contained.
\begin{lem}
    As simplicial categories, $\mathcal{W}(S^1_\bullet \wedge \A)$ is equivalent to $\W(\A)^\bullet$. 
\end{lem}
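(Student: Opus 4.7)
The plan is to exhibit a degreewise isomorphism of categories and then verify it is compatible with the simplicial structure maps.

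First, I would explicitly identify the non-basepoint elements of $S^1_n$. Using the standard model $S^1 = \Delta^1/\partial\Delta^1$, an element of $\Delta^1_n$ is a nondecreasing map $[n]\to[1]$, and after identifying the two constant maps to the basepoint, the non-basepoint simplices are in bijection with $\{1,\dots,n\}$: label by $\tau_k$ the sequence $(\underbrace{0,\dots,0}_{k},\underbrace{1,\dots,1}_{n+1-k})$. Hence $(S^1_n\wedge\A)^\circ$ consists of $n$ disjoint copies of $\A^\circ$, labelled $\tau_1,\dots,\tau_n$. An object of $\W(S^1_n\wedge\A)$ is a finite tuple of such labelled objects, so grouping the tuple by label defines a bijection with $n$-tuples of objects of $\W(\A)$, i.e.\ with $\textup{Obj}(\W(\A)^n)$. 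Since a covering family in $S^1_n\wedge\A$ must lie inside a single copy of $\A$, a morphism in $\W(S^1_n\wedge\A)$ also decomposes uniquely into $n$ morphisms in $\W(\A)$, one per label. This produces an isomorphism of categories
\[ \Phi_n:\W(S^1_n\wedge\A)\xrightarrow{\ \cong\ }\W(\A)^n. \]

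Next I would check naturality of $\Phi_\bullet$ in the simplicial variable by computing the effect of $d_i$ and $s_i$ on the labels $\tau_k$. Direct calculation gives:
\[ d_0(\tau_1)=*,\quad d_0(\tau_k)=\tau_{k-1}\ (k\ge 2);\qquad d_n(\tau_n)=*,\quad d_n(\tau_k)=\tau_k\ (k\le n-1); \]
and for $0<i<n$, $d_i(\tau_i)=d_i(\tau_{i+1})=\tau_i$ while all other $\tau_k$ are unchanged (up to renumbering). Via $\Phi_\bullet$, $d_0$ deletes the first group, $d_n$ deletes the last group, and $d_i$ for $0<i<n$ amalgamates the $i$-th and $(i{+}1)$-st groups into a single tuple indexed by the disjoint union---which matches the face maps of $\W(\A)^\bullet$ on the nose. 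A similar computation for $s_i$: for $i=0$ no label equals $\tau_1$ in the image so an empty tuple is inserted at position $1$; for $i=n$ no label equals $\tau_{n+1}$ so an empty tuple is inserted at the end; for $0<i<n$ an empty tuple is inserted at position $i{+}1$. This matches the degeneracies of $\W(\A)^\bullet$ as spelled out in the definition.

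Finally, I would note that the convention that a map of pointed sets $f\wedge\A$ sends objects labelled by $x$ with $f(x)=*$ to the initial object of $Y\wedge\A$ is handled correctly because objects isomorphic to $\emptyset$ are systematically excluded from tuples in $\W$; the resulting ``delete and re-label'' operation is precisely what makes the face maps match. Combining the degreewise isomorphisms with the compatibility check gives an isomorphism of simplicial categories, and hence the desired equivalence.

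I do not expect a hard obstruction here; the whole argument is a labelled-tuple bookkeeping. The only step that requires any care is the compatibility of $\Phi_\bullet$ with $d_i$ and $s_i$ for internal $0<i<n$, where one needs to make sure that the convention for indexing the merged tuple (i.e.\ the disjoint union of two indexing sets) agrees with the label collapse $\tau_i,\tau_{i+1}\mapsto\tau_i$ in the simplicial circle.
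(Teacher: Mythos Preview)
Your proposal is correct and follows essentially the same approach as the paper: both define the equivalence by partitioning a tuple $\{A_i\}_I$ according to which copy of $\A^\circ$ each $A_i$ lies in, and both note that morphisms decompose labelwise because covering families in $S^1_n\wedge\A$ lie in a single copy. Your version is in fact more thorough than the paper's, which merely asserts compatibility with the face and degeneracy maps without carrying out the explicit check on the labels $\tau_k$ that you perform.
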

\begin{proof}
We denote by $\A_1^\circ,\dots, \A_n^\circ \subseteq S^1_n\wedge \A$ the $n$ subcategories of non-basepoint objects corresponding to the $n$ different copies of $\A$ in $S^1_n\wedge \A$.

    There are equivalences of categories 
    $$F:\W(S^1_n\wedge \A)\to  \W(\A)^n$$
    that are compatible with the face and degeneracy maps. Indeed, for $\{A_i \}_I$ in $\W(S^1_n\wedge \A)$ there is a unique partition $I = I_1\sqcup \dots \sqcup I_n$ such that $A_i$ is in $\A^\circ_k$ if and only if $i\in I_k$ (note that some of the $I_k$ can be empty). We send this to the object 
    $$(\{A_i\}_{I_1},\dots, \{A_i\}_{I_n})$$
    in $ \W(\A)^n$, where some (but never all) of the $\{A_i\}_{I_k}$ can be empty. The empty object of $\W(S^1_n\wedge \A)$ is sent to the tuple of only empty objects in  $ \W(\A)^n$.

    An inverse of $F$ can be given by sending $(\{A_i\}_{I_1},\dots, \{A_i\}_{I_n})$ to $\{A_i \}_{I_1\sqcup \dots \sqcup I_n}$. 
\end{proof}
For $\A$ a category with covering families and $\C$ a symmetric monoidal squares category with complements, both $K(\A)$ and $K^\square(\C)$ are $\Omega$-spectra above level 0. Therefore it is enough to compare the level 1 spaces, in other words, show that $|N_\bullet\mathcal{W}(\A)^\bullet)|$ is homotopy equivalent to $|N_\bullet S^\square_\bullet\C|$. We do this by defining a functor of simplicial categories $\mathcal{W}(\A)^{\bullet}\to S^\square_\bullet \C$. 

In order to be able to do this, we ask that $\A$ and $\C$ share an ambient category $\D$ with a symmetric monoidal structure $\amalg$ that behaves sufficiently like a coproduct (for example, a restricted coproduct). We can then associate to an object $\{A_i \}_I$ in $\W(\A)$, which one may think of as a formal disjoint union of objects in $\A$, an object $\amalg_I A_i$ in $\C$. This allows us to construct from an $n$-tuple of formal disjoint unions an object (\ref{eq:functor_G_n}) in $S_n^\square \C$. Since morphisms in $S_\bullet^\square \C$ are point-wise weak equivalences, we need a map $\amalg_I A_i \to \amalg_{J}B_j$ arising from covers $\{A_i \to B_j \}_{i\in \alpha^{-1}(j)}$ in $\W(\A)$, to be a weak equivalence in $\C$. 

The conditions in the following proposition are such that such a functor $\mathcal{W}(\A)^{\bullet}\to S^\square_\bullet \C$ can be defined, and can be shown to be a levelwise homotopy equivalence using Quillen's Theorem A.

\begin{prop}\label{prop:assmebler_K_th_is_squares_K_th}
Let $\D$ be a category that is the ambient category of a squares category with complements $\C = (\E,\M)$. Assume moreover that there is a category $\mathcal{A}\subseteq \D$ that has the structure of a category with covering families. Moreover, assume the following.
    \begin{enumerate}[label = (B\arabic*)]
        \item \label{ax:coprod} The category $\D$ has a symmetric monoidal structure $\amalg$ for which the distinguished object $\emptyset$ is the unit, and this makes $\C$ a symmetric monoidal squares category.
          \item\label{ax:covering_fam_is_weq} The symmetric monoidal structure $\amalg$ induces a functor
              $$\amalg:\W(\A)\to \E$$sending $\{A_i\}_{i\in I}$ to $\amalg_{i\in I} A_i$, and this functor takes values in weak equivalences.
               \item \label{ax:weq_coprod} Weak equivalences are stable under $\amalg$.

         %\item \label{ex:weq_is_cover} For $B_i, i\in I$ objects in $\D$ and $\amalg_I B_i \to A$ a weak equivalence, the composites $B_i \to \amalg_I B_i \to A$ form a covering family. Deze wordt geipliceert door de rest.

        %such that the pullbacks $A\times_B B_i \to B_i$ exist and are isomorphisms, and such that $$\{A\times_B B_i \to A \}$$ is a covering family. {\color{red} why not "there is covering family of $A$? Then previous condition implies there is cover of $B$. And/or remark that these are equivalent conditions}
         
            \item\label{ax:coprod_incl_how_ver} For all objects $A, B$, the map $A\to A \amalg B$ induced by $\emptyset \to B$ is both in $\M$ and in $\E$.
             \item \label{ax:compl_of_coprod} For $A\to A\amalg B$ the canonical map, the complement map $(A\amalg B)\setminus A \to A \amalg B$ coincides with the canonical map $B\to A\amalg B$. 
              \item \label{ax:weq_exhibits_dist} For \begin{center}
            \begin{tikzcd}
                A \arrow[r, "f"] \arrow[d, "h"] & B\arrow[d, "j"] \\
                C\arrow[r, "g"] & D
            \end{tikzcd}
        \end{center}
        a pullback square with $f,g$ in $\M$ and $j, h$ in $\E$, if the induced morphism $B\setminus A \to D \setminus C$ is a weak equivalence, then this square is distinguished.
              \item \label{ax:covers_are_monos} All weak equivalences are monomorphisms in $\E$. 
            \item \label{ax:coprod_cover} For $A_1,\dots, A_k$ objects in $\D$, the set $\{A_i\to \amalg_{j=1}^k A_j\}_{i=1}^k$ is a covering family.

        \item \label{ax:complements_are_cover} For $A\to B$ in $\M$, $\{A\to B, B\setminus A \to B \}$ is a covering family.
           \item \label{ax:weq_locally_isom} For every weak equivalence $f:A\to B$ there is a family $\{A_i \to A \}_{i\in I} $ such that $\{A_i \to A \xrightarrow{f} B \}_{i\in I}$ is also a covering family.
        \item \label{ax:covers_refine} Any two covering families
        $\{B_i \to A \}_{I}$ and $\{B'_{i} \to A \}_{I'}$ have a common refinement, in the following sense. There is a covering family $\{C_j \to A \}_J$, and morphisms $\{C_j \}_J\to \{B_i\}_I$ and $\{C_j\}_J\to \{B'_i\}_{I'}$ in $\W(\A)$, such that the resulting diagram
        \begin{center}
            \begin{tikzcd}
                \{C_j\}_J \arrow[d] \arrow[dr] \arrow[r] & \{B_i\}_{I} \arrow[d] \\ \{B'_{i}\}_{I'} \arrow[r] & \{A\}
            \end{tikzcd}
        \end{center} in $\W(\A)$ commutes.
    \end{enumerate}
    Then there is an equivalence of $K$-theory spectra $K(\C)\simeq K^\square(\A)$.
\end{prop}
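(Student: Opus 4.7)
The plan is to reduce to a comparison at level $1$ of the spectra and then construct an explicit simplicial functor. Since both $K(\A)$ and $K^\square(\C)$ are $\Omega$-spectra above level $0$ (by Definition \ref{defn:k_th_cov_cat} for $K(\A)$, and by \cite[Theorem 2.5]{squares} combined with \ref{ax:coprod} for $K^\square(\C)$), it is enough to produce a weak equivalence of their level-one spaces. The preceding lemma identifies the level-one space of $K(\A)$ with $|N_\bullet\W(\A)^\bullet|$, while Lemma \ref{lem:Tplus_vs_S_square} together with Proposition \ref{prop:Tplus_vs_T} identifies that of $K^\square(\C)$ with $|N_\bullet S^\square_\bullet\C|$. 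I would construct a simplicial functor $F_\bullet\colon \W(\A)^\bullet\to S^\square_\bullet\C$ and show that each $F_n$ induces a weak equivalence on nerves via Quillen's Theorem A.

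For the construction, $F_n$ sends a tuple $(\{A_i\}_{I_1},\dots,\{A_i\}_{I_n})$ to the diagram $(A_{j,k})_{0\le j<k\le n}$ with $A_{j,k}=\amalg_{i\in I_{j+1}\sqcup\cdots\sqcup I_k}A_i$; its horizontal and vertical arrows are the canonical coproduct inclusions, which lie in $\M$ and $\E$ respectively by \ref{ax:coprod_incl_how_ver}. Each constituent square is a pullback in $\D$ (a formal property of the restricted coproduct), and \ref{ax:compl_of_coprod} identifies the induced map on complements with an identity, so the squares are distinguished by \ref{ax:sufficient_for_dist}. On morphisms, a tuple of covering families is sent to its $\amalg$-image, which is a componentwise weak equivalence by \ref{ax:covering_fam_is_weq}; the naturality squares are distinguished using \ref{ax:weq_coprod}, \ref{ax:compl_of_coprod}, and \ref{ax:weq_exhibits_dist}. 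Compatibility with face and degeneracy maps is immediate: merging $I_k\sqcup I_{k+1}$ under the middle $d_k$ corresponds to contracting the $k$-th level of the filtration $A_{0,1}\rightarrowtail\cdots\rightarrowtail A_{0,n}$, while $s_k$ inserts an empty tuple matching a repeated row in the $S^\square_\bullet$-diagram.

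The main work is to show that each comma category $F_n\downarrow X$, for $X=(A_{j,k})\in S^\square_n\C$, has contractible nerve. An object is a tuple-cover $Y$ together with componentwise weak equivalences $F_n(Y)\to X$. The plan is to prove this category is cofiltered: given two such objects, \ref{ax:covers_refine} yields a common refinement of the underlying covering families, and \ref{ax:weq_locally_isom} lets one promote the refined weak equivalences to genuine covers, so the refinement sits below both original objects. Producing sufficiently many such objects to begin with requires building a coherent cover of the whole filtration $X$ from covers of the successive complements $A_{k-1,k}$; here \ref{ax:coprod_cover} (that coproducts of objects form a covering family) and \ref{ax:complements_are_cover} (that a morphism together with its complement covers the target) are essential, with the monomorphism condition \ref{ax:covers_are_monos} ensuring that row-by-row refinements glue consistently. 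This bookkeeping--patching together refinements so that the complement structure on $X$ is respected--is the main obstacle. Once contractibility is established, Theorem A gives that each $N_\bullet F_n$ is a weak equivalence, and realization of the induced map of bisimplicial sets delivers the desired equivalence $|N_\bullet\W(\A)^\bullet|\simeq|N_\bullet S^\square_\bullet\C|$, hence $K(\A)\simeq K^\square(\C)$.
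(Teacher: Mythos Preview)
Your proposal is correct and follows essentially the same route as the paper: construct the simplicial functor $\W(\A)^\bullet\to S^\square_\bullet\C$ via iterated $\amalg$, verify the squares are distinguished using \ref{ax:compl_of_coprod} and \ref{ax:weq_exhibits_dist}, and apply Quillen's Theorem~A by showing each comma category is filtered. One small clarification: the paper uses \ref{ax:covers_are_monos} not for ``gluing'' refinements but to dispatch the third filteredness condition (coequalizing parallel arrows)---since the weak equivalences into $A_{0n}$ are monomorphisms, there is at most one morphism between any two objects of the comma category, so parallel arrows are automatically equal.
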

\begin{proof}
 We compare the simplicial category $\mathcal{W}(\mathcal{A})^\bullet$ to $S^\square_\bullet \mathcal{C}$. To this end, we define a functor of simplicial categories $G_\bullet$ with components
$$G_n:\mathcal{W}(\mathcal{A})^n\to S^\square_n\mathcal{C}$$
	sending an object
	$$(\{A_i \}_{I_1},\dots, \{A_i\}_{I_n})$$
	to the diagram 
     \begin{equation}\label{eq:functor_G_n}
        \begin{tikzcd}
            \emptyset  \arrow[r] &\amalg_{I_1} A_i \arrow[r] & \amalg_{I_2\sqcup I_2}A_i \arrow[r] & \dots \arrow[r] & \amalg_{I_1\sqcup \dots \sqcup I_n} A_i \\
            & \emptyset \arrow[r] \arrow[u] & \amalg_{I_2} A_i \arrow[r] \arrow[u] & \dots \arrow[r] & \amalg_{I_2\sqcup \dots \sqcup I_n}\arrow[u]\\
            & & \dots \arrow[u] &   & \dots \arrow[u]\\
            & & &  &\emptyset \arrow[u]
        \end{tikzcd}
    \end{equation}
    This is well-defined on objects: we can use \ref{ax:weq_exhibits_dist} to check that each square of the form 
    \begin{center}
        \begin{tikzcd}
            \amalg_{I_{k+1}\sqcup \dots \sqcup I_j} A_i\arrow[d]\arrow[r]& \amalg_{I_{k+1}\sqcup \dots \sqcup I_{j+1}} A_i \arrow[d] \\
            \amalg_{I_{k}\sqcup \dots \sqcup I_{j}} A_i \arrow[r] & \amalg_{I_{k} \sqcup \dots \sqcup I_{j+1}} A_i
        \end{tikzcd}
    \end{center}
    is distinguished, since the map on complements is the identity on $\amalg_{I_{j+1}}A_i $ by \ref{ax:compl_of_coprod}. 
To see that $G_n$ is well-defined on morphisms, we need to see that for a morphism $$(\{A_i \}_{I_1},\dots, \{A_i\}_{I_n})\to (\{B_j \}_{J_1},\dots, \{B_j\}_{J_n})$$ in $\W(\A)^n$, 
consisting of maps of finite sets $\alpha_k:I_k\to J_k$ and covering families $\{A_i \to B_j \}_{i\in \alpha_k^{-1}(j)}$ for all $1\leq k \leq n$ and $j\in J_k$,
the squares
\begin{center}
    \begin{tikzcd}
        \amalg_{I_k\sqcup \dots \sqcup I_l}A_i \arrow[d] \arrow[r] & \amalg_{I_k \sqcup \dots \sqcup I_{l+1}} A_i \arrow[d] \\
         \amalg_{J_k\sqcup \dots \sqcup J_l}B_j \arrow[r] & \amalg_{J_k \sqcup \dots \sqcup J_{l+1}} B_j  
    \end{tikzcd}
\end{center} are distinguished. This follows since the map on complements $\amalg_{I_{l+1}}A_i\to \amalg_{J_{l+1}}B_j$ is a weak equivalence, by \ref{ax:covering_fam_is_weq} and \ref{ax:weq_coprod}. \\

We use Quillen's Theorem A to show that each $G_n$ is a homotopy equivalence after realization.  Consider the comma category $G_n/(A_{ij})$ for $(A_{ij})$ an object 
      \begin{center}
        \begin{tikzcd}
            \emptyset  \arrow[r] & A_{01} \arrow[r] & A_{02} \arrow[r] & \dots \arrow[r] & A_{0n}\\
            & \emptyset \arrow[r] \arrow[u] & A_{12} \arrow[r] \arrow[u] & \dots \arrow[r] & A_{1n} \arrow[u]\\
            & & \dots \arrow[u] &   & \dots \arrow[u]\\
            & & &  &\emptyset \arrow[u]
        \end{tikzcd}
    \end{center}
in $S^\square_n\C$. We show that each comma category is contractible, by showing that it is filtered.

We first show that $G_n/(A_{ij})$ is non-empty. For each $j\geq 2$, there are weak equivalences $A_{j-1,j}\to A_{ij}\setminus A_{i,j-1}$ for $0\leq i\leq j-1$. Using \ref{ax:weq_locally_isom} and \ref{ax:covers_refine}, we can find a cover $$\{B_k\to A_{j-1,j} \}_{k\in K_j}$$ such that composing with each of the weak equivalences $A_{j-1,j}\to A_{ij}\setminus A_{i,j-1}$ gives a cover 
$\{B_k\to  A_{ij}\setminus A_{i,j-1} \}_{k\in K_j}$ for all $0\leq i\leq j-1$ simultaneously. We denote by $\{B_k \}_{k\in K_1}$ the one-object set $\{A_{01}\}$, and denote 
$B = (\{B_k \}_{K_1},\dots, \{B_k \}_{K_n})$.

Now we observe that by \ref{ax:complements_are_cover}, each $A_{ij}$ has a cover
$$\{A_{i,i+1}\to A_{ij}, A_{i,i+2}\setminus A_{i, i+1} \to A_{ij},\dots, A_{ij}\setminus A_{i, j-1} \}.$$
Composing with the covers we found before, this means there is a cover
$\{B_k  \to A_{ij}\}_{k\in I_{i+1}\sqcup \dots \sqcup I_{j} }$ for all $0\leq i < j\leq n$. Now \ref{ax:coprod_cover} gives a map $G_n(B)\to (A_{ij})$.  

To see that two objects in $J_n/(A_{ij})$ have a third object lying above it, suppose we have 
$$B = (\{B_k \}_{K_1},\dots, \{B_k \}_{K_n})\ \ \  \textup{and} \ \ \ B' = (\{B'_k \}_{K'_1},\dots, \{B'_k \}_{K'_n})$$
with morphisms $G_n(B)\to (A_{ij})$ and $G_n(B')\to (A_{ij})$. In particular, this means that for $j=1, \dots, n$, there are covers $\{B_k\to A_{j-1,j} \}_{k\in K_j}$ and $\{B'_k \to A_{j-1, j} \}_{k\in K'_j}$. Let $\{B''_k \to A_{j-1,j}\}_{k\in K''_j }$ be a refinement, as in \ref{ax:covers_refine}. Then for
$$B'' = (\{B''_k \}_{K''_1},\dots, \{B''_k \}_{K''_n}),$$
there are maps $B'' \to B $ and $B''\to B'$ which are compatible with the maps $G(B)\to (A_{ij})$ and $G(B')\to (A_{ij})$.

Lastly, we need to show that two parallel morphisms $f,f'$ in $G_n/(A_{ij})$ have a morphism $g$ with $fg=fg'$. For $B$ and $B'$ as above, a morphism from $G(B)\to (A_{ij})$ to $G(B')\to (A_{ij})$ amounts to morphisms $\{B_k \}_{K_l}\to \{B'_{k}\}_{K'_l}$ for $l=1,\dots, n$ such that the induced diagram
\begin{center}
    \begin{tikzcd}
        \amalg_{K_1\sqcup \dots \sqcup K_n} B_k \arrow[rr] \arrow[dr] & & \amalg_{K'_1\sqcup \dots \sqcup K'_n} B'_k  \arrow[dl] \\
        & A_{0n} &
    \end{tikzcd}
\end{center}
commutes, where the maps into $A_{02}$ are weak equivalences. By \ref{ax:covers_are_monos} there is at most one morphism that makes such a diagram commute, so parallel morphisms in $G_n/(A_{ij})$ are always equal.

      This shows that the realizations $|N_\bullet \mathcal{W}(\A)^\bullet|$ and $|N_\bullet T_\bullet \C |$ are homotopy equivalent, as desired.
\end{proof}

\begin{rmk}
     The proposition above gives an alternative proof of \cite[Proposition 9.13]{devissage}. 
\end{rmk}

\section{Examples}\label{sect:examples}
To see that all $K$-theory is squares $K$-theory, we can now take our favorite category with covering families, and look for a squares category with complements that satisfies the conditions  Proposition \ref{prop:assmebler_K_th_is_squares_K_th}. The lists of conditions involved appear long, but are naturally satisfied in several geometric examples. Remarkable is that in all the examples we discuss, there several choices of squares category with complements that give the same $K$-theory spectrum.

\subsection{Assemblers}\label{subsect:assemblers}
For any category with covering families that satisfies a few additional conditions, we can define an associated ``minimal'' squares category with complements that has the same $K$-theory-spectrum.
\begin{prop}\label{prop:artificial_squares_cat_works}
    Suppose $\A$ is a category with covering families that satisfies the following conditions.
    \begin{enumerate}[label=(C\arabic*)]
    \item\label{ax3:coprod} $\A$ is a subcategory of some ambient category $\D$ with coproducts (or restricted pushouts over the initial object $\emptyset$, which we refer to as coproducts for simplicity),
    \item \label{ax3:corpod_cov_fam} for $A_i, i\in I$ a finite set of objects in $\D$, the family of coproduct inclusions 
$$\{A_i \to \amalg_I A_i \}_{i\in I}$$
is a covering family,
\item \label{ax3:disj_fam} For $\{A_i\to A \}_{i
    \in I}$ a covering family, the pairwise pullbacks $A_i\times_A A_j$ are isomorphic to $\emptyset$, 
    \item\label{ax3:cover_mono}  For $\{A_i\to A \}_{i
    \in I}$ a covering family, the induced morphism $\amalg_I A_i \to A$ is a monomorphism.
    \item \label{ax3:refinement} any two covering families of the same object have a common refinement, as in \ref{ax:covers_refine}.
\end{enumerate}
Then there is a squares category with complements $\C^{\min{}}_\D$ such that there is an equivalence of $K$-theory spectra $K(\A)\simeq K^\square(\C^{\min{}}_\D ).$
\end{prop}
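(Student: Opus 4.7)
The strategy is constructive: build $\C^\min_\D$ as the ``most economical'' squares category whose $\M$-morphisms record which inclusions come from covering families and whose $\E$-morphisms record refinements of covers, then check that it meets all the hypotheses of Proposition~\ref{prop:assmebler_K_th_is_squares_K_th} and conclude.

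\textbf{Construction.} Take $\D$ as ambient category, and declare the objects of $\C^\min_\D$ to be the finite $\amalg$-coproducts (formed in $\D$) of objects of $\A^\circ$, together with $\emptyset$. Let $\M$ be the smallest subcategory of $\D$ containing all maps $A\to B$ for which there exists an $A'$ such that $\{A\to B, A'\to B\}$ is a covering family of $\A$; for such a pair set $B\setminus A := A'$, which is well-defined up to canonical isomorphism by \ref{ax3:refinement} and \ref{ax3:disj_fam}. Let $\E$ be the smallest subcategory containing all maps $\amalg_{j\in J} C_j \to \amalg_{i\in I} B_i$ induced by morphisms $\{C_j\}_J\to\{B_i\}_I$ of $\W(\A)$, which in particular contains all isomorphisms and all coproduct inclusions $A\to A\amalg B$. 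Declare a commutative square in $\D$ with $\M$-horizontal and $\E$-vertical sides to be distinguished iff it is a pullback square in $\D$ and the morphism of complements induced by \ref{ax:compl_functorial} is an isomorphism.

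\textbf{Verifications.} For the axioms of a squares category with complements: \ref{ax:mono} is \ref{ax3:cover_mono}; \ref{ax:compl} and \ref{ax:dist_is_pb} are by construction; \ref{ax:closed_under_pb} uses \ref{ax3:disj_fam}, which ensures that pulling back a cover-inclusion along a refinement yields a sub-refinement, hence again an $\M$-morphism (and symmetrically); \ref{ax:compl_functorial} and \ref{ax:compl_excision} reduce to summand-level bookkeeping using \ref{ax3:disj_fam}; \ref{ax:sufficient_for_dist} is tautological from the definition of distinguished; \ref{ax:dist_and_excision} follows by the same bookkeeping. For the hypotheses of Proposition~\ref{prop:assmebler_K_th_is_squares_K_th}: \ref{ax:coprod}--\ref{ax:coprod_incl_how_ver} follow from \ref{ax3:coprod} and the construction; \ref{ax:compl_of_coprod} is immediate; \ref{ax:weq_exhibits_dist} follows from \ref{ax:sufficient_for_dist} since in this setup the weak equivalences are precisely the isomorphisms on complements of $\emptyset$-squares; \ref{ax:covers_are_monos} is \ref{ax3:cover_mono}; \ref{ax:coprod_cover} is \ref{ax3:corpod_cov_fam}; \ref{ax:complements_are_cover} is how the complement was defined; \ref{ax:weq_locally_isom} follows by refining, via \ref{ax3:refinement}, any cover witnessing a given weak equivalence; and \ref{ax:covers_refine} is \ref{ax3:refinement}.

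\textbf{Main obstacle and conclusion.} The principal difficulty is ensuring that complements are canonical enough for \ref{ax:compl_functorial} to produce a \emph{unique} induced morphism, as required in the proof of Proposition~\ref{prop:Tplus_vs_T}; this can be arranged either by passing to a skeleton of the ambient category, or by upgrading objects to the combinatorial data $\{A_i\}_I\in\W(\A)$ so that the coproduct is literal rather than only defined up to isomorphism. Once this rigidification is in place, the verifications above go through and Proposition~\ref{prop:assmebler_K_th_is_squares_K_th} yields the desired equivalence $K(\A)\simeq K^\square(\C^\min_\D)$.
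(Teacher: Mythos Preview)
Your overall strategy---build a minimal squares category and then invoke Proposition~\ref{prop:assmebler_K_th_is_squares_K_th}---matches the paper's, but your specific construction has a genuine gap that prevents the verification from going through.

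The problem is your definition of distinguished square: you require the induced map on complements to be an \emph{isomorphism}. With this choice, the weak equivalences in $\C^\min_\D$ (vertical maps $f$ for which the square with $\emptyset$'s on the left is distinguished) are exactly the isomorphisms. But then \ref{ax:covering_fam_is_weq} fails: for a nontrivial covering family $\{A_i\to A\}_I$ the induced map $\amalg_I A_i\to A$ is not an isomorphism in $\D$, so the functor $\amalg:\W(\A)\to\E$ does \emph{not} take values in weak equivalences. Your sentence ``\ref{ax:coprod}--\ref{ax:coprod_incl_how_ver} follow from \ref{ax3:coprod} and the construction'' glosses over exactly this, and your later remark that ``the weak equivalences are precisely the isomorphisms'' confirms the issue rather than resolving it. The fix is to declare a square distinguished when the complement map is a (coproduct of) cover-induced map(s) $\amalg_I A_i\to A$ coming from a covering family---this is what the paper does---so that such maps are weak equivalences by definition and \ref{ax:covering_fam_is_weq} becomes tautological.

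There is also a structural difference in your choice of $\M$. The paper takes $\M$ to consist of \emph{coproduct inclusions} $\amalg_{I_0}A_i\to\amalg_I A_i$ induced by $I_0\subseteq I$, with complement $\amalg_{I\setminus I_0}A_i$. This is purely combinatorial and makes the complement literally canonical, so axioms \ref{ax:compl_functorial}--\ref{ax:dist_and_excision} are immediate set-level identities and the ``main obstacle'' you describe in your last paragraph simply disappears; no skeleton or rigidification is needed. Your choice of $\M$ as ``half of a binary cover'' is more general but, as you yourself note, introduces coherence problems for complements; it is also not obviously closed under pullback along $\E$-maps in the absence of further hypotheses on $\A$. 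Simplifying $\M$ to coproduct inclusions (and correspondingly enlarging $\E$ to allow subsets of covering families rather than only full covers, so that \ref{ax:closed_under_pb} holds) gives the paper's construction and makes all the verifications routine.
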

We note that conditions \ref{ax3:disj_fam} and \ref{ax3:refinement} are properties that all assemblers have by definition. The other conditions are also natural for most assemblers of geometric objects.

In order for a squares category $\C$ to admit  functors
$$G_n:\W(\A) \to S_n^\square\C$$
as in the proof of Proposition \ref{prop:assmebler_K_th_is_squares_K_th}, it is clear that the horizontal maps need to include all coproduct inclusions, and the weak equivalences need to include maps of the form \begin{equation}\label{eq:weq_from_cover}
    \coprod_I A_i \to A    
    \end{equation} for $\{A_i \to A \}_I$ a covering family, as well as their coproducts. The vertical maps need to include coproduct inclusions as well as weak equivalences, and they need to be stable under pullback by horizontal maps. This motivates the following definition. 
\begin{defn}\label{defn:E}
Let $\A$ be a category with covering families and ambient category $\D$, that satisfy the conditions of Proposition \ref{prop:artificial_squares_cat_works}. We define define $\E\subseteq \D$ to be the wide subcategory whose morphisms are of the form  $$\amalg_I A_i \to \amalg_J B_j$$ induced by a finite sets $\alpha:I\to J$ and maps $A_i\to B_j$, such that for all $j\in J$, $\{A_i\to B_j\}_{i\in \alpha^{-1}(j)}$ is a subset of a covering family of $B_j$.
\end{defn}

   For a coproduct inclusion $\amalg_{I_0} A_i \to \amalg_I A_i$ (where $I_0\subseteq I$), we define its complement to be the coproduct inclusion $\amalg_{I\setminus I_0} A_i \to \amalg_I A_i$.
    We can now define the following squares category with complements. 
\begin{defn}\label{defn:artificial_squares_cat}
   Let $\A$ be a category with covering families and ambient category $\D$, that satisfy the conditions of Proposition \ref{prop:artificial_squares_cat_works}. We define $\C^\min_\D$ to be the squares category with $\M$ the coproduct inclusions, and $\E$ the category defined in Definition \ref{defn:E}.
     Distinguished squares are pullback squares of the form 
    \begin{equation}\label{eq:square_C_min}
        \begin{tikzcd}
            \amalg_{I} A_i \arrow[r] \arrow[d] & \amalg_{J} A_i \arrow[d] \\
            \amalg_{K} B_k \arrow[r] & \amalg_L B_k
         \end{tikzcd}    \end{equation}
         where the induced map $\amalg_{ J\setminus I} A_i \to \amalg_{L \setminus K} B_k$ is a coproduct of maps of the form (\ref{eq:weq_from_cover}).
\end{defn}
% It is straightforward to see that the conditions in Definition \ref{defn:squares_cat_w_complements} are satisfied.
\begin{proof}[Proof of Proposition \ref{prop:artificial_squares_cat_works}]
The squares category with complements $\C^\min_\D$ is designed to satisfy the conditions of Proposition \ref{prop:assmebler_K_th_is_squares_K_th}, with respect to $\A$ and the ambient category $\D$.
\end{proof}
\begin{rmk}
    Instead of a squares category, we could have defined a Waldhausen category whose $K$-theory is equivalent to $K(\A)$, as follows. Let $\W ald(\A)$ be the category whose objects are the objects of $\W(\A)$. A morphism $\{A_i\}_I \to \{B_j\}_J$ is given by a \textit{partial} map $\alpha:I\dashrightarrow J$ and maps $A_i \to B_j$ whenever $\alpha$ is defined on $i$, and $\alpha(i)=j$, such that $\{A_i \to B_j\}_{\alpha(i)=j}$ is a subset of a covering family of $B_j$.  We can give $\W ald(\A)$ the structure of a Waldhausen category, where cofibrations are maps 
    $$\{A_i \}_{I_0}  \to \{A_i \}_I$$
    induced by an inclusion $I_0 \subseteq I$, and  weak equivalences are all morphisms that are in $\W(\A)$. For $\{A_i \}_{I_0}  \to \{A_i \}_I$ a cofibration, the cofiber map is then the map $\{A_i \}_I \to \{A_i \}_{I \setminus I_0}$ induced by the partial map $I \dashrightarrow I \setminus I_0$ that is only defined on $I \setminus I_0$. Then $S_\bullet \W ald(\A)$ (where $S_\bullet$ is the classical $S_\bullet$-construction of a Waldhausen category) is equivalent to $S^\square \C_\D^\min$, and therefore homotopy equivalent to $\W(\A)^\bullet$. Better yet, one can show that  $S_\bullet \W ald(\A)$ is  equivalent to $ \W(\A)^\bullet$ for \textit{any} assembler $\A$.  %is this true for any assembler?
While this is a fun fact, it seems that there are some advantages to having a more natural-looking squares category $\C$ such that $K(\A) \simeq K^\square(\C)$, see for example Section \ref{sect:application}.
\end{rmk}

In the following sections we will see examples of categories with covering families to which Proposition \ref{prop:artificial_squares_cat_works} applies. We will also consider other squares categories with complements besides $\C^\min_\D$ that have the same $K$-theory spectrum, and compare these to squares categories that have appeared elsewhere in the literature. 

\subsection{Varieties}
The category of varieties (reduced, separated schemes of finite type over a field $k$) fits into this setup. We denote by $\var$ the ambient category of varieties over a fixed field $k$. All arguments work for the category of reduced, finite type schemes over a reduced finite type base scheme as well.

We refer to \cite[Section 2]{voevodsky} for the definition of a Grothendieck topology generated by a cd-structure. 
\begin{defn}\label{defn:cat_cov_var}
We denote by $\A_{\var}$ the category with covering families whose objects are varieties, and whose morphisms are compositions of open and closed immersions. The covering families are the the simple covers in the Grothendieck topology on $\var$ generated by the cd-structure consisting of squares 
\begin{center}
    \begin{tikzcd}
        \emptyset \arrow[r] \arrow[d] & X\setminus U \arrow[d] \\
        U \arrow[r] & X
    \end{tikzcd}
\end{center}
for $U\to X$ an open immersion.
\end{defn}

This is exactly the assembler defined by Zakharevich in \cite[Section 5.1]{Z-Kth-ass}. A squares category of varieties $\C^\min_\var$ can be defined as in Definition \ref{defn:artificial_squares_cat}, and by Proposition \ref{prop:artificial_squares_cat_works}, its $K$-theory spectrum is equivalent to $K(\A_\var)$. Other squares categories of varieties have appeared in the literature, such as \cite[Example 4.4 and Example 4.5]{squares}.
\begin{defn}
    A \textit{piecewise isomorphism} of varieties is a morphism of varieties $f:X\to Y$ such that there is a filtration 
    $$\emptyset = Y_0 \xrightarrow{\circ} Y_1 \xrightarrow{\circ} \dots \xrightarrow{\circ} Y_{n-1}\xrightarrow{\circ} Y_n = Y$$
    of $Y$ by open immersions, such that the restrictions $f|_{f^{-1}(Y_i\setminus Y_{i-1})} : f^{-1}(Y_i \setminus Y_{i-1}) \xrightarrow{\cong} Y_i \setminus Y_{i-1}$ are isomorphisms for $i=1, \dots, n$.
\end{defn}
This definition is equivalent to the more standard definition given in terms of a stratification by closed immersions.
\begin{defn}[{\cite[Example 4.4]{squares}}]\label{defn:squares_cat_var}
    We define a squares category with complements
     $\C_{\var = (\M_\var,\E_\var)}$ with distinguished object $\emptyset$, $\M_\var\subseteq \var$ the category of open immersions and $\E_\var = \var$. The complement of an open immersion $i:U\hookrightarrow X$ is the closed immersion $X\setminus i(U) \to X$. The distinguished squares are pullback squares
    \begin{center}
        \begin{tikzcd}
            A\arrow[d, "h"] \arrow[r, "f", hook] & B \arrow[d, "j"] \\
            C \arrow[r, "g", hook] & D
        \end{tikzcd}
    \end{center} such that the induced map $j:B\setminus f(A) \to D \setminus g(C)$ is a piecewise isomorphism.
\end{defn}
It is straightforward to see that this indeed defines a squares category with complements.

The following lemma about covering families in $\A_\var$ shows that covering families give rise to weak equivalences in $\C_{\var}$ (i.e., piecewise isomorphisms).
\begin{lem}\label{lem:simple_cover_weq}
    For $\{A_i\to A\}_{i\in I}$ a covering family in $\A_{\var}$, there is a filtration of open immersions 
    $$\emptyset= U_0 \to U_1 \to \dots \to U_{n-1} \to U_n= A$$ such that the set of varieties $ \{U_i\setminus U_{i-1}\}_{i=1,\dots, n}$ is equal to the set $\{A_i \}_{i\in I}$. As a consequence, $\amalg_I A_i \to A$ is a piecewise isomorphism.
\end{lem}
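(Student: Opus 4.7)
The plan is to prove the lemma by induction on the number of refinement steps needed to produce the covering family from the trivial cover $\{A \xrightarrow{=} A\}$, using the cd-structure. Recall that a simple cover in a cd-topology is, by definition, built by iteratively refining the trivial cover via pushouts of distinguished squares of the cd-structure; in our case the only such refinement move is: replace a morphism $B \to A$ in a cover by the two morphisms $U \to B \to A$ and $(B \setminus U) \to B \to A$, for some open immersion $U \hookrightarrow B$. So the task reduces to showing that the property ``there exists a filtration by opens whose successive differences are the members of the cover'' is preserved under this refinement move.

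For the base case, the trivial cover $\{A \to A\}$ corresponds to the filtration $\emptyset \hookrightarrow A$, whose single difference is $A$ itself. For the inductive step, suppose we have a covering family $\{A_i \to A\}_{i \in I}$ which arises from a filtration
$$\emptyset = U_0 \hookrightarrow U_1 \hookrightarrow \dots \hookrightarrow U_n = A,$$
with $A_i$ identified with $U_{k_i} \setminus U_{k_i - 1}$ for some $k_i$, and suppose we refine by choosing $i_0 \in I$ and an open immersion $V \hookrightarrow A_{i_0}$. Write $k = k_{i_0}$ and $B = U_k \setminus U_{k-1}$, so $B$ is closed in $U_k$ and $V$ is open in $B$. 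The key geometric step is: since $B$ is closed in $U_k$ and $V$ is open in $B$, there exists an open subscheme $W \subseteq U_k$ with $V = W \cap B$. Then $W$ is open in $A$, so $U_{k-1} \cup W$ is open in $A$, and one checks directly that $(U_{k-1} \cup W) \setminus U_{k-1} = V$ and $U_k \setminus (U_{k-1} \cup W) = B \setminus V$. Inserting $U_{k-1} \cup W$ between $U_{k-1}$ and $U_k$ in the filtration yields a new filtration whose successive differences are precisely the pieces of the refined cover.

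Once the filtration is established, the second claim is immediate from the definition of a piecewise isomorphism: the filtration $U_0 \hookrightarrow U_1 \hookrightarrow \dots \hookrightarrow U_n = A$ exhibits $A$ as a disjoint union (set-theoretically) of locally closed pieces $U_i \setminus U_{i-1}$, each of which is isomorphic to one of the $A_i$. The canonical map $\amalg_I A_i \to A$ restricted to the preimage of $U_i \setminus U_{i-1}$ is therefore an isomorphism onto $U_i \setminus U_{i-1}$, which is exactly the condition that the map is a piecewise isomorphism.

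The only delicate point is the geometric step of extending an open $V \subseteq B$ of a closed subset $B \subseteq U_k$ to an open $W \subseteq U_k$ with $V = W \cap B$; this is standard topology applied to the underlying topological spaces together with the fact that locally closed subschemes of a scheme are determined by their underlying sets once the ambient reduced structure is fixed (here everything is reduced by assumption on $\var$). Beyond that, the argument is a bookkeeping induction, and the main conceptual work is recognising that the cd-structure defining covers in $\A_{\var}$ is the same stratify-and-split move that underlies the definition of piecewise isomorphism.
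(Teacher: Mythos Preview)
Your proof is correct and follows essentially the same inductive strategy as the paper. The only cosmetic difference is that the paper avoids the auxiliary open $W$: since $B = U_k \setminus U_{k-1}$ is closed in $U_k$ and $V$ is open in $B$, the complement $B \setminus V$ is closed in $U_k$, so $U_{k-1} \cup V$ is already open in $U_k$ and can be inserted directly into the filtration; your detour through extending $V$ to $W$ is harmless but unnecessary.
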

\begin{proof}
  Since simple covers (for a topology generated by a cd-structure) are defined inductively, we can prove this by induction. The base case, for the simple cover
  $\{U \to A, A\setminus U \to A \}$, is trivial.

  Now we assume that the statement holds for a simple cover $S=\{A_i \to A \}_{i\in I}$, and we form a new cover $S'$ by composing with $\{U \to A_j, A_j\setminus U \to A_j \}$ for some $j\in I$. The induction hypothesis implies that there is a is a filtration by open immersions 
    $$ \emptyset = U_0 \to U_1 \to \dots \to U_{n-1} \to U_n= A$$ such that each $A_i$ coincides with $U_k\setminus U_{k-1}$ for some $k$. Suppose $A_j = U_k\setminus U_{k-1}$. Then $U \cup U_{k-1}$ is open in $U_k$, and $U_{k-1}$ is open in $U\cup U_{k-1}$, so 
     $$ \emptyset = U_0 \to \dots \to U_{k-1} \to U \cup U_{k-1} \to U_{k} \to \dots  \to U_n= A$$ is a filtration of $A$ that has the desired property with respect to the covering family $S'$. 
\end{proof}

\begin{prop}[{\cite[Proposition 9.13]{devissage}}]
    There is an equivalence of $K$-theory spectra
    $K^\square(\C_{\var})\simeq K(\var)$.
\end{prop}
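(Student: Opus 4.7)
The plan is to apply Proposition \ref{prop:assmebler_K_th_is_squares_K_th} to $\A = \A_{\var}$ and $\C = \C_{\var}$, with ambient category $\var$ and symmetric monoidal structure given by disjoint union. This reduces the statement to verifying, in this concrete geometric setting, the complement axioms of Definition \ref{defn:squares_cat_w_complements} together with the list of axioms \ref{ax:coprod}--\ref{ax:covers_refine}.

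First I would check that $\C_{\var}$ is a squares category with complements. Most of axioms \ref{ax:mono}--\ref{ax:dist_and_excision} are immediate from classical properties of open and closed immersions: morphisms in $\M_{\var}$ are monomorphisms \ref{ax:mono}; the open-closed decomposition of $U \hookrightarrow X$ produces the complement $X \setminus U \to X$, and the associated square is distinguished because the induced map on complements is the identity (hence trivially a piecewise isomorphism) \ref{ax:compl}; open immersions are stable under arbitrary pullback \ref{ax:closed_under_pb}; and distinguished squares are pullback squares by definition \ref{ax:dist_is_pb}. For \ref{ax:compl_functorial}--\ref{ax:dist_and_excision}, the key input is that formation of the set-theoretic complement is functorial for pullback squares of varieties, and that piecewise isomorphism is preserved by pullback along open immersions and by further restriction to strata.

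Next I would verify axioms \ref{ax:coprod}--\ref{ax:covers_refine}. Disjoint union equips $\var$ with a symmetric monoidal structure with unit $\emptyset$ compatible with the squares structure, handling \ref{ax:coprod}, \ref{ax:weq_coprod}, \ref{ax:coprod_incl_how_ver} and \ref{ax:compl_of_coprod}. The crucial bridge between the two sides is \ref{ax:covering_fam_is_weq}: a covering family $\{A_i \to A\}_I$ in $\A_{\var}$ induces the piecewise isomorphism $\amalg_I A_i \to A$, which is exactly Lemma \ref{lem:simple_cover_weq}. Piecewise isomorphisms are monomorphisms \ref{ax:covers_are_monos} since they are bijections on points restricting to isomorphisms on a stratification. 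Axiom \ref{ax:weq_exhibits_dist} is literally the definition of distinguished squares in $\C_{\var}$, and \ref{ax:coprod_cover}, \ref{ax:complements_are_cover} and \ref{ax:covers_refine} follow from the generating cd-structure and the fact that any assembler has common refinements of covering families by definition.

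The main obstacle will be verifying axiom \ref{ax:weq_locally_isom}: given a piecewise isomorphism $f \colon A \to B$, I need to produce a covering family $\{A_i \to A\}_I$ in $\A_{\var}$ whose composition with $f$ is again a covering family of $B$. My plan is to take the filtration $\emptyset = Y_0 \xrightarrow{\circ} Y_1 \xrightarrow{\circ} \cdots \xrightarrow{\circ} Y_n = B$ witnessing that $f$ is a piecewise isomorphism and set $A_i = f^{-1}(Y_i \setminus Y_{i-1})$. The strata $\{Y_i \setminus Y_{i-1} \to B\}$ form a covering family by iterating the generating cd-structure, the family $\{A_i \to A\}_i$ is its pullback along $f$ and hence a covering family of $A$, and post-composing with $f$ recovers (via the isomorphisms $A_i \xrightarrow{\cong} Y_i \setminus Y_{i-1}$) a covering family of $B$. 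With every axiom in place, Proposition \ref{prop:assmebler_K_th_is_squares_K_th} yields the desired equivalence $K^\square(\C_{\var}) \simeq K(\A_{\var}) = K(\var)$.
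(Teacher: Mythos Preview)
Your proposal is correct and follows essentially the same approach as the paper: both reduce to checking the hypotheses of Proposition~\ref{prop:assmebler_K_th_is_squares_K_th}, using Lemma~\ref{lem:simple_cover_weq} for \ref{ax:covering_fam_is_weq} and the stratification witnessing a piecewise isomorphism for \ref{ax:weq_locally_isom}. One small remark: in your verification of \ref{ax:weq_locally_isom}, the claim that $\{A_i \to A\}$ is a covering family is not quite ``because it is a pullback along $f$'' (pullback stability of covering families is only along morphisms in $\A_\var$, and $f$ need not be one), but rather because $f$ is continuous, so the open filtration $\{Y_i\}$ pulls back to an open filtration $\{f^{-1}(Y_i)\}$ of $A$, whose associated strata form a covering family via the generating cd-structure.
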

\begin{proof}
    We need to check that $\C_\var$ and $\A_{\var}$ satisfy the conditions of Proposition \ref{prop:assmebler_K_th_is_squares_K_th}. We note that piecewise isomorphisms are bijections on underlying sets and therefore monomorphisms, so \ref{ax:covers_are_monos} is satisfied. Coproducts open immersions, so \ref{ax:coprod_incl_how_ver} and \ref{ax:coprod_cover} are satisfied. Condition \ref{ax:complements_are_cover} is satisfied by the definition of $\A_{\var}$, and since $\A_\var$ is an assembler, \ref{ax:covers_refine} is also satisfied. 

    Condition \ref{ax:covering_fam_is_weq} follows from Lemma \ref{lem:simple_cover_weq}. Lastly, to see that \ref{ax:weq_locally_isom} holds, suppose $f:A\to B$ is a piecewise isomorphism and
    $$\emptyset = U_0 \to \dots \to U_n = B $$
    is a filtration by open immersions such that 
    $f|_{f^{-1}(U_i\setminus U_{i-1})}:f^{-1}(U_i\setminus U_{i-1}) \to U_i \setminus U_{i-1}$ is an isomorphism for $i=1,\dots, n$. Then $\{f^{-1}(U_i \setminus U_{i-1}) \to A \}_{i=1,\dots, n}$ is a covering family such that composing with $f$ gives a covering family of $B$, as desired.
\end{proof}

\begin{rmk}
There are other squares categories with complements with $\var$ as ambient category, that also satisfy the conditions of Proposition \ref{prop:assmebler_K_th_is_squares_K_th}. For example, in Definition \ref{defn:squares_cat_var} we could have taken the vertical maps to be only injective morphisms of varieties. 

Another squares category with $\var$ as ambient category is defined in \cite[Example 4.4]{squares}, with as horizontal maps the open immersions, and as vertical maps the closed immersions. This squares category fails condition \ref{ax:covering_fam_is_weq} in Proposition \ref{prop:assmebler_K_th_is_squares_K_th}, since the weak equivalences in this squares category are isomorphisms, and therefore do not include all maps of the form $\amalg_I A_i \to A$ associated to a covering family. It is nonetheless true that its $K$-theory spectrum is equivalent to $K(\A_{\var})$, this is shown in the proof of \cite[Theorem 9.1]{devissage}. 
\end{rmk}

\subsection{Definable sets}\label{sect:definable}

In this section we fix an o-minimal structure $(R,<,\dots)$ on a real closed field $R$. We denote by $\defin(R)$ the category whose objects are definable subsets of $R^n$ for varying $n$, and whose morphisms are definable maps, i.e., maps $f:X\to Y$ whose graph is definable.

\begin{lem}\label{lem:coprod}
    The category $\mathrm{Def}(R)$ has coproducts.
\end{lem}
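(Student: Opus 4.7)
The plan is to give an explicit construction of finite coproducts in $\defin(R)$ by disjointly embedding definable sets into a common affine space $R^k$ and then invoking the universal property. The initial object handles the empty coproduct, and the binary case combined with induction yields all finite coproducts.

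First I would verify that the empty set $\emptyset$, viewed as a definable subset of $R^0$, is initial in $\defin(R)$: for any definable $Z$ the empty function is the unique definable map $\emptyset \to Z$. This gives the nullary coproduct and also the unit for the eventual symmetric monoidal structure.

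For the binary case, given $X \subseteq R^n$ and $Y \subseteq R^m$, I would define
\[
X \amalg Y := \iota_X(X) \cup \iota_Y(Y) \subseteq R^{n+m+1},
\]
where $\iota_X(x) = (x_1,\dots,x_n,0,\dots,0,0)$ and $\iota_Y(y) = (0,\dots,0,y_1,\dots,y_m,1)$. Each $\iota$ is definable because its coordinate functions are either projections or the constants $0,1 \in R$; the images are disjoint because they differ in the last coordinate; and a finite union of definable sets is again definable. Hence $X \amalg Y$ is an object of $\defin(R)$ equipped with two definable inclusions.

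For the universal property, given definable $f \colon X \to Z$ and $g \colon Y \to Z$, I would produce the unique definable $h \colon X \amalg Y \to Z$ whose graph is the union of the (reindexed) graphs of $f$ and $g$, which is definable as a finite union of definable sets; uniqueness follows because $\iota_X$ and $\iota_Y$ are jointly surjective. Finite coproducts then follow by iterating the binary construction and pairing with $\emptyset$. I do not expect any real obstacle here: the only delicate point is definability of the embeddings, which reduces to the trivial observation that the constants $0$ and $1$ and the coordinate projections belong to any o-minimal language on a real closed field.
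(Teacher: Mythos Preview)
Your proof is correct and follows essentially the same approach as the paper: both construct $X \amalg Y$ by embedding $X \subseteq R^n$ and $Y \subseteq R^m$ disjointly into $R^{n+m+1}$ using a tagging coordinate, the paper with arbitrary distinct elements $a,b \in R$ and you with the specific choice $0,1$. Your treatment is slightly more detailed in spelling out the universal property and the nullary case, but there is no substantive difference.
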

\begin{proof}
Let $X,Y$ be definable subsets of $R^n$ and $R^m$ respectively. Let $a,b$ distinct elements of $R$. Then the definable set 
$$X\amalg Y :=X\times \{0\}^m \times \{a\} \cup \{0\}^n \times Y \times \{b\}$$
admits definable inclusions $X\to X \amalg Y $ and $Y\to X \amalg Y$ and has the universal property of the coproduct of $X$ and $Y$ in $\defin(R)$.
\end{proof}

\begin{defn}
    We define $\mathcal{A}_{\defin(R)}$ to be the wide subcategory of $\defin(R)$ spanned by injective definable maps. We give it the structure of a category with covering families where a family
    $$\{f:A_i\to A \}_{i\in I}$$
    is covering if and only if $A = \bigcup_I A_i$ and the pairwise pullbacks $A_i\times_A A_j$ are empty.
\end{defn}
This category with covering families is in fact an assembler, see also \cite[Example 3.4]{Z-Kth-ass}.
We can define a squares category of definable sets $\C^\min_{\defin(R)}$ as in \ref{defn:artificial_squares_cat}, and by Proposition \ref{prop:artificial_squares_cat_works} it has the same $K$-theory spectrum as the category with covering families $\A_{\defin(R)}$. Another possible choice of squares category is the following.
\begin{defn}\label{defn:square_def_sets} 
    We define a squares category with complements  $\C_{\defin(R)}$ with $\defin(R)$ as ambient category, with both $\M_{\defin(R)}$ and $\E_{\defin(R)}$ the definable injections. The complement of a definable injection $f:A\to B$ is the set-theoretic complement $B\setminus f(A) \to B$. A square
    \begin{center}
        \begin{tikzcd}
            A \arrow[d, "h"] \arrow[r, "f"] & B\arrow[d, "j"] \\
            C \arrow[r, "g"] & D
        \end{tikzcd}
    \end{center}
    is distinguished if and only if it is a pullback square and the induced morphism $B\setminus f(A) \to D\setminus g(C)$ is a definable bijection. 
 \end{defn} 
 It is, again, easy to verify that this defines a squares category with complements.

\begin{prop}\label{prop:eq_K_spec_definable}
    There is an equivalence of $K$-theory spectra
    $$K(\A_{\defin(R)})\cong K(\C_{\defin(R)}).$$
\end{prop}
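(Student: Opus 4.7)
The plan is to invoke Proposition \ref{prop:assmebler_K_th_is_squares_K_th} with ambient category $\D = \defin(R)$, squares category with complements $\C = \C_{\defin(R)}$, and category with covering families $\A = \A_{\defin(R)}$. The single preliminary observation to establish is that the weak equivalences in $\C_{\defin(R)}$ are precisely the definable bijections: a vertical morphism $f \colon A \twoheadrightarrow B$ is a weak equivalence iff the square with corners $\emptyset, A, \emptyset, B$ is distinguished, which by Definition \ref{defn:square_def_sets} means the induced map on complements $B \setminus \emptyset \to A \setminus \emptyset$, namely $f$ itself, is a definable bijection.

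Once this identification is in hand, most of the axioms follow by direct inspection. Lemma \ref{lem:coprod} supplies the symmetric monoidal structure on $\defin(R)$ with unit $\emptyset$; the coproduct of definable injections (resp.\ bijections) is again such, so $\C_{\defin(R)}$ inherits a symmetric monoidal structure, giving \ref{ax:coprod} and \ref{ax:weq_coprod}, and the explicit form of $X \amalg Y$ in Lemma \ref{lem:coprod} gives \ref{ax:coprod_incl_how_ver} and \ref{ax:compl_of_coprod} (the complement of the left coproduct inclusion is the right coproduct inclusion). Axioms \ref{ax:complements_are_cover} and \ref{ax:coprod_cover} are built into the definition of covering families in $\A_{\defin(R)}$, and \ref{ax:covers_refine} holds because $\A_{\defin(R)}$ is an assembler. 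Using the identification of weak equivalences as definable bijections: \ref{ax:weq_exhibits_dist} is immediate from the definition of distinguished squares; \ref{ax:covers_are_monos} holds because bijections are injective; and for \ref{ax:weq_locally_isom}, a definable bijection $f \colon A \to B$ gives a singleton covering family $\{f \colon A \to B\}$ in $\A_{\defin(R)}$, so the trivial cover $\{\mathrm{id}_A\}$ of $A$ composes with $f$ to a covering family of $B$.

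The one condition requiring a moment of work is \ref{ax:covering_fam_is_weq}, the claim that the coproduct functor $\amalg \colon \W(\A_{\defin(R)}) \to \E_{\defin(R)}$ lands in weak equivalences. A morphism $\{A_i\}_I \to \{B_j\}_J$ in $\W(\A_{\defin(R)})$ consists of a function $\alpha \colon I \to J$ together with, for each $j \in J$, a disjoint cover $\{A_i \to B_j\}_{i \in \alpha^{-1}(j)}$ of $B_j$ by definable injections whose images partition $B_j$. Such a cover exhibits $B_j$ as the set-theoretic disjoint union of these images, so the induced map $\amalg_{i \in \alpha^{-1}(j)} A_i \to B_j$ is a definable bijection; taking the coproduct over $j \in J$ yields a definable bijection $\amalg_I A_i \to \amalg_J B_j$, which is the required weak equivalence. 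With all eleven conditions verified, Proposition \ref{prop:assmebler_K_th_is_squares_K_th} produces the desired equivalence $K(\A_{\defin(R)}) \simeq K(\C_{\defin(R)})$.
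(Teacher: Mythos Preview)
Your proof is correct and follows exactly the same approach as the paper: verify the hypotheses of Proposition \ref{prop:assmebler_K_th_is_squares_K_th}, after first identifying the weak equivalences in $\C_{\defin(R)}$ with the definable bijections. One small slip: the induced map on complements in the weak-equivalence square goes $A\setminus\emptyset \to B\setminus\emptyset$ (not the other direction), but since you correctly identify it as $f$ itself this does not affect the argument.
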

\begin{proof}
The conditions of Proposition \ref{prop:assmebler_K_th_is_squares_K_th} are easily verified in this example. Note that the weak equivalences are exactly the definable bijections, in other words, the isomorphisms in $\defin(R)$.
\end{proof}\begin{rmk}

In the proposition above, the structure $R$ does in fact not need to be o-minimal; we assumed it for simplicity, and but it becomes relevant only later in this section, and in Section \ref{sect:application}. \end{rmk}

 In \cite[Example 1.13, Example 1.24]{Ming}, Ng defines a pCGW-category of definable sets, with the same horizontal and vertical morphisms and the same distinguished squares as the category $\C_{\defin(R)}$ in Definiton \ref{defn:square_def_sets}. The $K$-theory spectrum of a pCGW-category can be computed using an $S_\bullet$-construction similar to Definition \ref{defn:S_square};  the $S_\bullet$-construction for pCGW-categories, \cite[Construction 1.26]{Ming}, is a simplicial set rather than a simplicial category. The elements in degree $n$ are exactly the objects of $S^\square_n\C_{\defin(R)}$. However, all morphisms in $S^\square_\bullet\C_{\defin(R)}$ are invertible. This implies that the simplicial set and the simplicial category have the same geometric realization. The argument for this is sketched in \cite[Lemma 1.4.1]{waldhausen}. Since this reasoning will be used more often, we fill in the details in the following lemma. 
 \begin{lem}[{\cite[Corollary to Lemma 1.4.1]{waldhausen}}]\label{lem:simplicial_set_vs_cat}
     Let $\mathfrak{X}_\bullet$ be a simplicial category, and $X_\bullet$ the simplicial set given by $X_n = \textup{Obj}(\mathfrak{X_n})$. Suppose that each category $\mathfrak{X}_n$ is a groupoid.
     Then there is weak homotopy equivalence of spaces
     $$|X_\bullet| \simeq |N_\bullet(\mathfrak{X}_\bullet)|$$ where $|N_\bullet(\mathfrak{X}_\bullet)|$ is the geometric realization of the bisimplicial set $$n,k \mapsto N_n(\mathfrak{X}_k).$$
 \end{lem}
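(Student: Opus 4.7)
The plan is to apply Waldhausen's realization lemma for bisimplicial sets \cite[Lemma 1.4.1]{waldhausen}, which asserts that a map of bisimplicial sets induces a weak equivalence on diagonal realizations provided it restricts to a weak equivalence on each row.

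First I would view $X_\bullet$ as the bisimplicial set $(n,k)\mapsto X_k$ that is constant in the nerve direction $n$; its diagonal realization agrees with $|X_\bullet|$. The iterated degeneracy $s_0^n: N_0(\mathfrak{X}_k)\to N_n(\mathfrak{X}_k)$, sending an object $x$ to the chain of identities $\mathrm{id}_x\to\cdots\to\mathrm{id}_x$, assembles into a bisimplicial map $\iota: X_{\bullet,\bullet}\to N_\bullet\mathfrak{X}_\bullet$. By the realization lemma it then suffices to prove that for each fixed $k$ the map $\iota_k: X_k\to N_\bullet\mathfrak{X}_k$, where $X_k$ is regarded as a constant simplicial set, is a weak homotopy equivalence of simplicial sets.

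The main obstacle is precisely this last step, and it is where the groupoid hypothesis is essential. I would exhibit the projection $p_k: N_\bullet\mathfrak{X}_k\to X_k$ that sends an $n$-chain $x_0\to\cdots\to x_n$ to its starting vertex $x_0$ as a strict retraction of $\iota_k$, so that $p_k\circ\iota_k=\mathrm{id}_{X_k}$. I would then construct a simplicial homotopy $\iota_k\circ p_k\simeq\mathrm{id}_{N_\bullet\mathfrak{X}_k}$ using the invertibility of every morphism in $\mathfrak{X}_k$ to produce coherent contractions of arbitrary chains to their starting vertex; this is the classical argument recorded in the proof of the corollary to \cite[Lemma 1.4.1]{waldhausen}. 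Finally, since the retraction $p_k$ and the contracting homotopy are manifestly natural in the functorial data of $\mathfrak{X}_\bullet$, they assemble into a bisimplicial homotopy, thereby completing the application of the realization lemma and yielding the asserted weak equivalence $|X_\bullet|\simeq|N_\bullet\mathfrak{X}_\bullet|$.
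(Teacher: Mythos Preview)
Your levelwise-in-$k$ argument cannot work. For a fixed $k$ you assert that the inclusion $\iota_k\colon X_k \hookrightarrow N_\bullet\mathfrak{X}_k$ of the set of objects into the nerve of the groupoid $\mathfrak{X}_k$ is a weak equivalence; but the nerve of a groupoid is weakly equivalent to $\coprod_{[x]} B\mathrm{Aut}(x)$, not to its discrete set of objects. For instance if $\mathfrak{X}_k = B(\Z/2)$ then $X_k=\{\ast\}$ while $|N_\bullet\mathfrak{X}_k|\simeq \mathbb{RP}^\infty$. Concretely, your proposed retraction $p_k$ sending a chain to its first vertex is not a map of simplicial sets: $p_k\bigl(d_0(x_0\to\cdots\to x_n)\bigr)=x_1$, whereas $d_0\bigl(p_k(x_0\to\cdots\to x_n)\bigr)=x_0$. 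No simplicial homotopy $\iota_k\circ p_k\simeq \mathrm{id}$ can exist in this generality, and Waldhausen's corollary to Lemma~1.4.1 does not contain such an argument.

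The paper proceeds in the \emph{transposed} direction: it first realizes in $k$, obtaining the simplicial space $n\mapsto |N_n(\mathfrak{X}_\bullet)|$ whose zeroth term is $|X_\bullet|$, and then argues that the face and degeneracy maps of \emph{this} simplicial space are homotopy equivalences, so that (via goodness, fat realization, and a homotopy-colimit identification) the total realization collapses to $|X_\bullet|$. The essential difference is that the paper's approach exploits the simplicial structure in $k$ rather than treating each $\mathfrak{X}_k$ in isolation; your approach discards exactly the information needed. (That said, the paper's key assertion that the structure maps are equivalences is itself not argued in detail, and the constant-at-$B(\Z/2)$ example shows the lemma as literally stated needs more than the bare groupoid hypothesis; in the applications to $S^\square_\bullet\C$ there is additional structure implicitly in play.)
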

 \begin{proof}
     One way to realize the bisimplicial set $N_\bullet(\mathfrak{X}_\bullet)$, is to apply geometric realization level-wise, and then take the geometric realization of the resulting simplicial space
     $$n\mapsto |N_n(\mathfrak{X}_\bullet)|.$$
     This simplicial space is a good simplicial space, since the degeneracy maps exhibit $|N_n(\mathfrak{X}_\bullet)|$ as closed subspace and retract of $|N_{n+1}(\mathfrak{X}_\bullet)|$. Therefore the realization of this simplicial space is weakly equivalent to its fat realization (\cite[{Proposition A.1 (iv)}]{segal}).

 We note that it zeroth space of $n \mapsto |N_n(\mathfrak{X}_\bullet)|$ is $|N_0(\mathfrak{X}_\bullet)| = |X_\bullet|$, and the face and degeneracy maps are all homotopy equivalences. Moreover, each $|N_n(\mathfrak{X}_\bullet)|$ is a cofibrant space. Therefore by \cite[Theorem 4.3 and Remark 4.4]{Wang}, the fat realization $$\| n \mapsto |N_n(\mathfrak{X}_\bullet) | \|$$ is weakly equivalent to the homotopy colimit of the diagram. Therefore it is weakly equivalent to any of its spaces, and in particular to $|X_\bullet|$, as desired.
 \end{proof}
\begin{cor}\label{cor:K_square_def_vs_K_pCGW_def}
The $K$-theory spectrum $K^\square(\C_{\defin(R)})$ is equivalent to the $K$-theory spectrum of $\defin(R)$ as pCGW-category.
\end{cor}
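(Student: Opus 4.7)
The plan is to compare the two $K$-theory spectra through their respective $S_\bullet$-constructions. On the squares side, combining Lemma \ref{lem:Tplus_vs_S_square} with Proposition \ref{prop:Tplus_vs_T} gives $K^\square(\C_{\defin(R)}) \simeq \Omega_O |N_\bullet S_\bullet^\square \C_{\defin(R)}|$, since $\C_{\defin(R)}$ is a squares category with complements. On the pCGW side, the $K$-theory spectrum of $\defin(R)$ is built, by \cite[Construction 1.26]{Ming}, from a simplicial set $X_\bullet$. By \cite[Example 1.13, Example 1.24]{Ming} Ng's horizontal maps, vertical maps, and distinguished squares on $\defin(R)$ coincide with those of Definition \ref{defn:square_def_sets}, so $X_n = \textup{Obj}(S_n^\square \C_{\defin(R)})$ with matching face and degeneracy operators (both defined by composition and insertion of identities).

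The key step is to apply Lemma \ref{lem:simplicial_set_vs_cat} with $\mathfrak{X}_\bullet = S_\bullet^\square \C_{\defin(R)}$. For this I must check that each category $S_n^\square \C_{\defin(R)}$ is a groupoid. As noted right after Definition \ref{defn:S_square}, every component of a morphism in $S_n^\square \C_{\defin(R)}$ is a weak equivalence. A vertical morphism $f\colon A \twoheadrightarrow B$ of $\C_{\defin(R)}$ is a weak equivalence exactly when the square with $O \to A$ and $O \to B$ as horizontal rows is distinguished; by Definition \ref{defn:square_def_sets}, this demands that $B \setminus f(A) \to B$ be a definable bijection, which forces $B \setminus f(A) = \emptyset$ and hence $f$ to be a definable bijection. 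Definable bijections are isomorphisms in $\defin(R)$, so $S_n^\square \C_{\defin(R)}$ is a groupoid and Lemma \ref{lem:simplicial_set_vs_cat} yields $|N_\bullet S_\bullet^\square \C_{\defin(R)}| \simeq |X_\bullet|$, matching the first delooping of Ng's spectrum.

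To upgrade the equivalence of first spaces to an equivalence of spectra, I would observe that both are infinite loop spaces, obtained from symmetric monoidal / $\Gamma$-space machinery built on the restricted coproduct of Lemma \ref{lem:coprod}: the squares side via \cite[Theorem 2.5]{squares}, and the pCGW side via Ng's analogous construction. The groupoid argument of the previous paragraph applies functorially to each degree of the associated $\Gamma$-spaces, producing compatible deloopings. The main obstacle is purely bookkeeping: verifying that Ng's simplicial operators and his infinite loop machine agree on the nose with the ones produced by \cite[Theorem 2.5]{squares}. Both constructions are natural and defined identically on the relevant simplices, so the compatibility should be immediate from the definitions, but a careful write-up must trace through the identifications to rule out off-by-one or orientation discrepancies.
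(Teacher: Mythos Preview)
Your approach is exactly the paper's: identify Ng's simplicial set with $\textup{Obj}(S_\bullet^\square \C_{\defin(R)})$, observe that each $S_n^\square \C_{\defin(R)}$ is a groupoid, and invoke Lemma~\ref{lem:simplicial_set_vs_cat} (together with Lemma~\ref{lem:Tplus_vs_S_square} and Proposition~\ref{prop:Tplus_vs_T}).

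One slip to fix: in the square
\[
\begin{tikzcd}
O \arrow[r] \arrow[d] & A \arrow[d, "f"] \\
O \arrow[r] & B
\end{tikzcd}
\]
the horizontal maps are $O\to A$ and $O\to B$, so by Definition~\ref{defn:square_def_sets} the induced map on complements is $A\setminus\emptyset \to B\setminus\emptyset$, that is, $f$ itself; the distinguishedness condition says directly that $f$ is a definable bijection. Your claim that the condition is ``$B\setminus f(A)\to B$ a definable bijection'' misreads which maps are horizontal, and the deduction ``forces $B\setminus f(A)=\emptyset$'' does not follow from that statement anyway. The conclusion you need is correct and is recorded in the paper in the proof of Proposition~\ref{prop:eq_K_spec_definable}.
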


Let $\A^{\leq n}_{\defin(R)} \subseteq \A_{\defin(R)}$ the subcategory with covering families spanned by the definable sets of dimension at most $n$.  The natural inclusions $\A^{\leq n}_{\defin(R)} \hookrightarrow \A^{\leq n+1}_{\defin(R)} $  induce a filtration
\begin{equation}\label{eq:filtration_K(Def)}
  * = K(\A_{\defin(R)}^{\leq-1})\to K(\A^{\leq 0}_{\defin(R)})\to K(\A^{\leq 1}_{\defin(R)})\to \dots \to K(\A_{\defin(R)}).  
\end{equation}
Let $\defin(R)^n \subseteq \defin(R)$ denote the full subcategory on definable set of dimension exactly $n$. 
\begin{defn}
    We denote by $\A_{\defin(R)^n}$ the wide subcategory of $\defin(R)^n$ spanned by definable injections. We give it the structure of a category with covering families where 
    $$\{f_i:A_i\to A\}_{i\in I}$$
    is a covering family if there are morphisms $g_j:B_j\to A$ for $j\in J$, with $B_j$ of dimension $\leq n-1$, such that 
    $$\{f_i:A_i\to A\}_{i\in I} \cup \{g_j:B_j\to A \}_{j\in J}$$
    is a covering family in $\A_{\defin}$.
\end{defn}
We note that this category with covering families is exactly the assembler $\A^{\leq n}_{\defin(R)}\setminus \A^{\leq n-1}_{\defin(R)}$ as defined in \cite[Definition 2.9]{Z-Kth-ass}. Therefore by \cite[Theorem D]{Z-Kth-ass}, the associated graded of the filtered spectrum (\ref{eq:filtration_K(Def)}) is  $\bigvee_{n\geq 0}K(\A_{\defin(R)^n})$. It is easy to see that the assembler $\A_{\defin(R)^n}$ satisfies the conditions of Proposition \ref{prop:artificial_squares_cat_works}, and therefore its $K$-theory spectrum is equivalent to the $K$-theory spectrum of a squares category.

\subsection{Polytopes}
In this section we fix a euclidean or hyperbolic $n$-dimensional geometry $X$ for $n\ge 1$, and a subgroup  $G$ of the isometry group of $X$. We assume that $G$ is such that for any two $n$-polytopes $P, G$, there is a $g\in G$ such that $gP$ and $Q$ are disjoint. This holds for example if $X$ is a euclidean geometry and $G$ contains all translations.

 We denote by $\P^X_G$ the category of covering families defined in \cite[Example 2.9]{trace_map}. Objects are finite unions of $n$-dimensional closed simplices, and a morphisms $g:P\to Q$ is given by an isometry $g\in G$ such that $gP\subseteq Q$. A covering family is a collection of maps $$\{g_i:P_i\to P \}$$ such that the images $g_i P_i$ have pairwise intersections of measure zero, and cover $P$. 
This category with covering families is in fact an assembler, see also \cite[Example 3.6 and Section 5.2]{Z-Kth-ass}.

We define the following ambient category which admits $\P^X_G$ as a subcategory, and has better categorical properties.
\begin{defn}\label{defn:ambient_polytope_cat}
    We define $\D^G_X$ to have the same objects as $\P^X_G$. A morphism $g:P\to G$ in $\P^X_G$ is given by a decomposition $P = P_1\cup \dots \cup P_k$ such that the inclusions
    $$\{P_i \to P\}_{i=1,\dots, k}$$ form a covering family in $\P^G_X$, and by group elements $g_i\in G$ such that $g_iP_i \subseteq Q$. Two morphisms $P\to G$ are considered the same if the defining decompositions have a common refinement, on which they are given by the same elements in $G$.
\end{defn}
We note that a morphism $g:P\to Q$ in $\D^G_X$, as in the definition above, is invertible exactly when $$\{g_i:P_i \to Q\}_{i=1,\dots, k}$$ is a covering family, in other words, the images cover $Q$ and are pairwise almost disjoint. In that case, the inverse is given by the decomposition of $Q$ in $Q=g_1(P_1) \cup \dots \cup g_k(P_k)$ and the isometries $g_1^{-1}, \dots, g_k^{-1}$.  

We call a morphism $P \to Q$ in $\D^G_X$ a \textit{polytope inclusion} if it is a morphism in $\P^G_X$, in other words, it consists of a single $g\in G$ with $gP \subseteq Q$. 
\begin{prop}
The category $\D^G_X$ has coproducts.
\end{prop}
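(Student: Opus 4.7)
The plan is to construct the coproduct explicitly by using the hypothesis on $G$ to translate one polytope away from the other, and then verify the universal property directly. Concretely, given two objects $P, Q \in \D^G_X$, choose $g \in G$ such that $gP$ and $Q$ are disjoint (using the standing assumption on $G$). Set
\[
P \amalg Q := gP \cup Q,
\]
which is again a finite union of $n$-dimensional closed simplices, hence an object of $\D^G_X$. Define the coproduct inclusions $\iota_P : P \to P \amalg Q$ to be the morphism specified by the trivial decomposition $P = P$ together with the isometry $g$, and $\iota_Q : Q \to P \amalg Q$ to be the morphism given by the trivial decomposition and the identity. If one of $P, Q$ is the empty polytope, take the other as the coproduct. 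Independence of the choice of $g$ follows from the observation that for any two such choices $g, g'$, the morphism $gP \cup Q \to g'P \cup Q$ given by the decomposition along $gP$ and $Q$ with isometries $g'g^{-1}$ and $\mathrm{id}$ respectively is invertible in $\D^G_X$.

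To verify the universal property, suppose $f : P \to S$ and $h : Q \to S$ are morphisms in $\D^G_X$, presented by decompositions $P = P_1 \cup \dots \cup P_k$ with isometries $g_i \in G$ satisfying $g_i P_i \subseteq S$, and $Q = Q_1 \cup \dots \cup Q_m$ with isometries $h_j \in G$ satisfying $h_j Q_j \subseteq S$. Define $\phi : P \amalg Q \to S$ by the decomposition
\[
P \amalg Q = gP_1 \cup \dots \cup gP_k \cup Q_1 \cup \dots \cup Q_m
\]
with isometries $g_i g^{-1}$ on $gP_i$ (so that $(g_ig^{-1})(gP_i) = g_iP_i \subseteq S$) and $h_j$ on $Q_j$. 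The fact that $gP$ and $Q$ are disjoint, combined with the fact that the given decompositions of $P$ and $Q$ are covering families in $\P^G_X$, ensures that the above decomposition is a covering family in $\P^G_X$, so $\phi$ is a well-defined morphism of $\D^G_X$. A direct unraveling of composition in Definition \ref{defn:ambient_polytope_cat} shows $\phi \circ \iota_P = f$ and $\phi \circ \iota_Q = h$.

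For uniqueness, any morphism $\phi' : P \amalg Q \to S$ satisfying the two commutativity conditions is specified by some decomposition of $P \amalg Q$ refining the decomposition $P \amalg Q = gP \cup Q$; the condition $\phi' \circ \iota_P = f$ (resp. $\phi' \circ \iota_Q = h$) forces the restriction of $\phi'$ to $gP$ (resp. $Q$) to have a common refinement with the restriction of $\phi$, so $\phi$ and $\phi'$ are identified under the equivalence relation on morphisms in $\D^G_X$. The main obstacle I anticipate is keeping track of this equivalence relation: one must carefully pass to a common refinement of the two candidate decompositions of $P \amalg Q$ and check that the group elements assigned on each piece agree, rather than being content with set-theoretic equality of images. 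Once this bookkeeping is done, the universal property follows.
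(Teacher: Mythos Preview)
Your proof is correct and follows essentially the same approach as the paper: translate $P$ by some $g\in G$ so that $gP$ and $Q$ are disjoint, take $P\amalg Q := gP\cup Q$, and build the mediating morphism by acting by $g_i g^{-1}$ on the pieces of $gP$ and by $h_j$ on the pieces of $Q$. You supply considerably more detail than the paper (which omits uniqueness and independence of the choice of $g$), but the underlying construction is identical.
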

\begin{proof}
    Let $P, Q$ be polytopes in $X$. By assumption, there is an isometry $g\in G$ such that $gP$ and $Q$ are disjoint. Then the union $gP \cup G$ has the universal property of the coproduct. Indeed, for $h_1:P \to R$ and $h_2:Q \to R $, a morphism $h: gP \cup Q \to R$ can be given by defining $h$ to be $h_1 g^{-1}$ on $gP$ and $h_2$ on $Q$. 
\end{proof}

The ambient category $\D^G_X$ and the assembler $\P^G_X$ satisfy the assumptions of Proposition \ref{prop:artificial_squares_cat_works}, so there is a squares category $\C^\min_{\D^X_G}$ such that $K(\P^G_X) \simeq K^\square(\C^\min_{\D^G_X})$. Another squares category can be defined as follows.
\begin{defn}\label{def:squares_cat_polytopes}
    We define a squares category with complements $\C_X^G$ which has $\D_X^G$ as ambient category, with the empty polytope $\emptyset$ as distinguished object,  $\M$ the category of polytope inclusions, and $\E=\D^G_X$. The complement of a polytope inclusion $g:P \to Q$ is the inclusion of the closure $\overline{Q\setminus gP} \to Q$. The distinguished squares are the pullback squares
    \begin{equation*}
        \begin{tikzcd}
            P \arrow[d] \arrow[r,  "g"] & Q \arrow[d]\\
            R \arrow[r, "h"] & S 
        \end{tikzcd}
    \end{equation*}
    such that the induced morphism on the complements $\overline {Q\setminus gP} \to  \overline{S\setminus hR}$ is invertible.
\end{defn}
\begin{prop}\label{prop:polytopes_squares_cov_agree}
    There is an equivalence of $K$-theory spectra
    $$K^\square(\C^G_X) \simeq K(\P^G_X).$$
\end{prop}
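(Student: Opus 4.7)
The plan is to apply Proposition \ref{prop:assmebler_K_th_is_squares_K_th} to the squares category $\C^G_X$ and the category with covering families $\P^G_X$, with common ambient category $\D^G_X$ and symmetric monoidal structure given by the coproduct constructed above. The first task is to identify the weak equivalences of $\C^G_X$: by the definition of distinguished square (pullback in $\D^G_X$ plus invertibility of the induced map on complements), a vertical map $f\colon A \to B$ is a weak equivalence exactly when $f$ is invertible in $\D^G_X$, which in turn occurs exactly when $f$ is given by a decomposition $A = A_1 \cup \dots \cup A_k$ together with elements $g_i \in G$ such that $\{g_i\colon A_i \to B\}$ is a covering family in $\P^G_X$.

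With this characterization, most of the conditions \ref{ax:coprod}--\ref{ax:covers_refine} are straightforward. Conditions \ref{ax:coprod}, \ref{ax:weq_coprod} and \ref{ax:weq_exhibits_dist} follow from the preceding discussion and the definition of $\C^G_X$. Condition \ref{ax:coprod_incl_how_ver} holds since the coproduct inclusion $A \to A \amalg B$ is by construction a polytope inclusion (and hence lies in $\M \subseteq \E = \D^G_X$), and \ref{ax:compl_of_coprod} is immediate since $A \amalg B$ is the disjoint union of $gA$ and $B$ for a suitable $g$, so the closure of its set-theoretic complement is exactly $B$. Condition \ref{ax:covers_are_monos} holds because invertible morphisms in $\D^G_X$ are isomorphisms, hence monomorphisms. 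Conditions \ref{ax:coprod_cover} and \ref{ax:complements_are_cover} are immediate from the definition of the covering families in $\P^G_X$: the coproduct inclusions cover the coproduct, and a polytope inclusion together with the closure of its complement forms a covering family in the sense of $\P^G_X$ (the images cover and have pairwise intersection of measure zero). Condition \ref{ax:covers_refine} is part of the fact that $\P^G_X$ is an assembler.

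The key remaining condition is \ref{ax:covering_fam_is_weq}: for a covering family $\{g_i\colon P_i \to P\}_{i\in I}$ in $\P^G_X$, the induced map $\amalg_I P_i \to P$ in $\D^G_X$ must be a weak equivalence, i.e.\ invertible in $\D^G_X$. This follows directly from the characterization of invertibility above, since the morphism $\amalg_I P_i \to P$ is by construction given by the decomposition $\amalg_I P_i = \bigsqcup_i g_i'(P_i)$ (for the isometries $g_i'$ placing the $P_i$ disjointly in the coproduct) together with the group elements $g_i \circ (g_i')^{-1}$ sending each piece into $P$, and the family of images is precisely $\{g_iP_i\}$, which covers $P$ by hypothesis. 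Condition \ref{ax:weq_locally_isom} is the converse statement: any invertible $f\colon A \to B$ in $\D^G_X$ is given by some decomposition $A = A_1 \cup \dots \cup A_k$ and isometries $g_i$, so $\{A_i \to A\}$ is a covering family in $\P^G_X$ whose composite with $f$ is the covering family $\{g_iA_i \to B\}$ witnessing invertibility.

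The main (though still mild) obstacle is bookkeeping around the definition of morphisms in the ambient category $\D^G_X$ up to common refinement; in particular one needs to verify that $\amalg\colon \W(\P^G_X) \to \E$ is genuinely a functor, which amounts to checking that a composite of covers is sent to a composite of weak equivalences compatibly with the equivalence relation on decompositions defining morphisms in $\D^G_X$. Once these verifications are in place, Proposition \ref{prop:assmebler_K_th_is_squares_K_th} yields the desired equivalence $K^\square(\C^G_X) \simeq K(\P^G_X)$.
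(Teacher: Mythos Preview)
Your proposal is correct and follows essentially the same approach as the paper: apply Proposition \ref{prop:assmebler_K_th_is_squares_K_th} after noting that $\C^G_X$ is a squares category with complements, and verify the conditions (B1)--(B11). The paper's proof is terser, simply remarking that the verification is straightforward and singling out \ref{ax:covering_fam_is_weq} and \ref{ax:weq_locally_isom} as following from the definition of morphisms in $\D^G_X$, which is exactly the point you spell out in more detail.
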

\begin{proof}
    It is straightforward to verify the conditions on Proposition \ref{prop:Tplus_vs_T}.  The conditions of Proposition \ref{prop:assmebler_K_th_is_squares_K_th} are also easily verified; in particular \ref{ax:covering_fam_is_weq} and \ref{ax:weq_locally_isom} follow from the way that morphisms in $\D^G_X$ are defined.
\end{proof}

    In \cite[Example 2.14]{Calle_Sarazola} a proto-Waldhausen category $\mathbb{P}^n_G$ of euclidean polytopes in $\R^n$ is defined, which is in fact a squares category with complements. It has the same horizontal maps and complements as $\C^G_{E_n}$. The distinguished squares are similar to those in $\C^{G}_{E^n}$, except that the vertical maps are required to be polytope inclusions, and therefore the weak equivalences are maps $P\to Q$ that are given by an isometry $g$ such that $gP = Q$.
    
    \begin{prop}\label{prop:squares_cat_CS_agrees}
        There is an equivalence of $K$-theory spectra $K(\P^G_{E_n})\simeq K^\square(\mathbb{P}^n_G)$.
    \end{prop}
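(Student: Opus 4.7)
The plan is to factor the comparison through Proposition~\ref{prop:polytopes_squares_cov_agree}, which gives $K^\square(\C^G_{E_n}) \simeq K(\P^G_{E_n})$, by showing that the natural inclusion of squares categories $\iota: \mathbb{P}^n_G \to \C^G_{E_n}$ induces an equivalence on $K$-theory. Here $\iota$ is the identity on objects and horizontal morphisms, and regards polytope inclusions as morphisms in $\D^G_{E_n}$ on the vertical side; it preserves distinguished squares because any polytope inclusion $g$ with $gP = Q$ is automatically invertible in $\D^G_{E_n}$.

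First, I would verify that $\mathbb{P}^n_G$ is itself a squares category with complements in the sense of Definition~\ref{defn:squares_cat_w_complements}. Polytope inclusions are monomorphisms, the complement of $g: P \rightarrowtail Q$ is $\overline{Q \setminus gP} \hookrightarrow Q$, and the remaining axioms follow by essentially the same geometric verifications as those carried out for $\C^G_{E_n}$ in the proof of Proposition~\ref{prop:polytopes_squares_cov_agree}. By Proposition~\ref{prop:Tplus_vs_T}, $K^\square(\mathbb{P}^n_G) \simeq \Omega|N_\bullet S^\square_\bullet \mathbb{P}^n_G|$, so the task reduces to showing that $S^\square_\bullet \iota$ induces a weak equivalence on bisimplicial realizations.

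Second, I would construct a canonical rectification on objects of $S^\square_n \C^G_{E_n}$. For $(A_{ij})$ with top-row filtration $\emptyset = A_{0,0} \rightarrowtail A_{0,1} \rightarrowtail \dots \rightarrowtail A_{0,n}$, define $r_n(A_{ij})$ to be the diagram whose entries are $\overline{A_{0,j} \setminus A_{0,i}}$. By axiom~\ref{ax:compl_excision} the induced complement map of each square of $r_n(A_{ij})$ is an identity, hence trivially a single isometry, so $r_n(A_{ij}) \in S^\square_n \mathbb{P}^n_G$. The canonical complement weak equivalences $\overline{A_{0,j} \setminus A_{0,i}} \to A_{ij}$ assemble into an isomorphism $\iota\, r_n(A_{ij}) \xrightarrow{\sim} (A_{ij})$ inside $S^\square_n \C^G_{E_n}$.

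Third, I would prove that $|N_\bullet S^\square_\bullet \iota|$ is a weak equivalence by applying Quillen's Theorem~A to each $\iota_n$, showing that the comma category $\iota_n / (A_{ij})$ is contractible for every $(A_{ij}) \in S^\square_n \C^G_{E_n}$. Nonemptiness is supplied by the rectification $r_n(A_{ij})$. For filteredness, given two objects $(B, \iota B \to (A_{ij}))$ and $(B', \iota B' \to (A_{ij}))$, the two underlying polytope decompositions of $(A_{ij})$ admit a common refinement by the refinement property of the assembler $\P^G_{E_n}$ (cf.\ axiom~\ref{ax:covers_refine}), producing a third object that dominates both. Combining the resulting equivalence $|N_\bullet S^\square_\bullet \mathbb{P}^n_G| \simeq |N_\bullet S^\square_\bullet \C^G_{E_n}|$ with Proposition~\ref{prop:polytopes_squares_cov_agree} yields the desired equivalence of $K$-theory spectra. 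The main obstacle is this last filteredness step: morphisms in $S^\square_n \mathbb{P}^n_G$ are componentwise single-isometry weak equivalences, so one must check that the common refinement can be realized as such a morphism into both $B$ and $B'$ compatibly with the given scissors-congruence maps to $(A_{ij})$, which is carried out by iterating axioms~\ref{ax:compl_excision} and~\ref{ax:dist_and_excision} on the pieces of the refinement.
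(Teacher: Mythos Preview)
Your first two steps are fine and in fact reproduce what the paper does: the rectification $r_n$ is exactly the composite $G_kF_kU_kH_k$ built from Lemma~\ref{lem:Tplus_vs_S_square} and Proposition~\ref{prop:Tplus_vs_T}, and it shows that every object of $S^\square_n\C^G_{E_n}$ is isomorphic to one in the image of $\iota$.

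The gap is in your third step. The comma categories $\iota_n/(A_{ij})$ are in general \emph{not} filtered, and not even connected. Morphisms in $S^\square_n\mathbb{P}^n_G$ are componentwise weak equivalences in $\mathbb{P}^n_G$, i.e.\ maps given by a \emph{single} isometry $g\in G$ with $gP=Q$. By contrast, the maps $\iota B\to (A_{ij})$ live in $S^\square_n\C^G_{E_n}$ and are componentwise \emph{piecewise} isometries. A common refinement of two covering families, as supplied by \ref{ax:covers_refine}, produces morphisms in $\D^G_{E_n}$ (scissors congruences), not single-isometry morphisms in $\mathbb{P}^n_G$; axioms~\ref{ax:compl_excision} and~\ref{ax:dist_and_excision} are statements about complements and do nothing to collapse a piecewise isometry to a single one. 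Concretely, for $n=1$ take $A_{01}$ a unit square, $B_{01}$ the same square, and $B'_{01}$ an L-shaped hexagon of unit area: both map to $A_{01}$ by scissors congruences, but no polytope admits single-isometry maps to both, so $(B,\mathrm{id})$ and $(B',f')$ lie in different components of $\iota_1/(A_{01})$. Even when connected, these comma categories are groupoids with possibly nontrivial automorphism groups, so contractibility would fail for a second reason.

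The paper avoids Theorem~A entirely. Having shown essential surjectivity of $\iota$, it lets $\langle\im(i)\rangle_\bullet\subseteq S^\square_\bullet\C^G_{E_n}$ be the full simplicial subcategory on objects in the image; this inclusion is then a levelwise equivalence of categories. The key observation is that $\langle\im(i)\rangle_\bullet$ and $S^\square_\bullet\mathbb{P}^n_G$ have the \emph{same objects} and are \emph{both groupoids} (all weak equivalences in $\C^G_{E_n}$ are invertible). Lemma~\ref{lem:simplicial_set_vs_cat} then says each has geometric realization weakly equivalent to that of the common simplicial set of objects, giving $|S^\square_\bullet\mathbb{P}^n_G|\simeq|\langle\im(i)\rangle_\bullet|\simeq|S^\square_\bullet\C^G_{E_n}|$. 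You have all the ingredients for this argument already; just replace your Theorem~A step with this groupoid comparison.
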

    \begin{proof}
    There is an inclusion of simplicial categories 
    $i:S_\bullet^\square \mathbb{P}^n_G \hookrightarrow S_\bullet^\square \C^G_{E^n}$. We note that every object in $S_\bullet^\square \C^G_{E^n}$ is isomorphic to an object in $\im(i)$. Indeed, let $A$ denote an object in $S_k^\square \C^G_{E^n}$, and $H_k(A)$ its image in $T^+_k\C^G_{E_n}$, as in the proof of Lemma \ref{lem:Tplus_vs_S_square}. The proof of Proposition \ref{prop:Tplus_vs_T} gives an object $F_kU_kH_k(A)$ in $T^+_k\C^G_{E_n}$ with a morphism $H_k(A) \to F_kU_kH_k(A)$, and applying $G_k$ gives a morphism 
    $A \to G_kF_kU_kH_k(A)$ in $S_k^\square \C^G_{E^n}$, which is an isomorphism since all weak equivalences in $\C^G_{E^n}$ are invertible. Moreover, $G_kF_kU_kH_k(A)$ is in $\im(i)$.
    
    Now for $k\geq 0,$ let $\langle \im(i)\rangle_k \subseteq S_k^\square \C^G_{E^n}$ denote the full subcategory spanned by objects in the image of $i$. These assemble into a simplicial category $\langle \im(i)\rangle_\bullet$ and by the above, the inclusion into $S_\bullet^\square \C^G_{E^n}$ induces a homotopy equivalence 
    $$|\langle \im(i)\rangle_\bullet | \simeq |S^\square_\bullet \C^G_{E_n}|.$$

On the other hand, $\langle \im(i)\rangle_\bullet$ and $S_\bullet^\square \mathbb{P}^n_G $ have the same objects, and in both categories, all morphisms are invertible. Therefore in both cases, by Lemma \ref{lem:simplicial_set_vs_cat}, the geometric realization is homotopy equivalent to the geometric realization of the simplicial set $\textup{Obj}(S_\bullet^\square \mathbb{P}^n_G)$. Combined, this shows that
 $$ |S^\square_\bullet \mathbb{P}^n_G| \simeq  |\langle \im(i)\rangle_\bullet | \simeq |S^\square_\bullet \C^G_{E_n}|.$$
In particular, by Proposition \ref{prop:polytopes_squares_cov_agree}, for both $\mathbb{P}^n_G$ and $\C^G_{E_n}$ the squares $K$-theory spectrum is equivalent to $K(\P^G_{E_n})$. 
    \end{proof}

\begin{rmk}
    In this section we assumed the geometry to be euclidean or hyperbolic, since this is needed for coproducts to exist in $\D^G_X$. For $X$ a spherical geometry, coproducts do not necessarily exist. One can still show that even in this case, $K(\P^G_X)$ is the $K$-theory of a squares category. This can be done by considering a category of ``spherical polytopes and formal coproducts''. Its subcategory generated by polytope inclusions and coproduct inclusions can be given the structure of a category with covering families, and dévissage shows that its algebraic $K$-theory is equivalent to $K(\P^G_X)$. Moreover, for this category with covering families, and its ambient category of spherical polytopes and formal coproducts, the conditions of Proposition \ref{prop:artificial_squares_cat_works} are fulfilled.
\end{rmk}

\section{Application: a derived Euler characteristic}\label{sect:application}

We return to the example in Section \ref{sect:definable} of definable sets in an o-minimal structure $\defin(R)$. The order topology on $R$ induces a topology on each $R^n$ and therefore on every definable set.

\begin{defn}
    Let $\defin(R)^{\lc}\subseteq \defin(R)$ be the full subcategory on definable sets that are locally closed (as subset of $R^n$ for some $n$).
\end{defn}
\begin{defn}
    We define $\A_{\defin(R)}^{\lc} $ to be the wide subcategory of $\defin(R)^\lc$ spanned by injective definable maps. We give it the structure of a category with covering families where a family
    $$\{f:A_i\to A \}_{i\in I}$$
    is covering if and only if it is a covering family in $\A_{\defin(R)}$. 
\end{defn}
\begin{lem}\label{lem:eq_K_th_def_vs_def_lc}
    There is an equivalence of $K$-theory spectra 
    $$K(\A_{\defin(R)})\simeq K(\A_{\defin(R)}^{\lc}).$$
\end{lem}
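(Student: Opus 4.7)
My plan is to apply Zakharevich's dévissage theorem for assemblers \cite{Z-Kth-ass} to the natural inclusion $\iota:\A_{\defin(R)}^\lc\to \A_{\defin(R)}$ and conclude that the induced map $K(\iota)$ is an equivalence of $K$-theory spectra.

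First I would check that $\iota$ is a fully faithful morphism of assemblers. Since $\defin(R)^\lc\subseteq \defin(R)$ is a full subcategory and the covering families on both sides are defined by the same recipe (a family of definable injections whose underlying sets cover the target with empty pairwise pullbacks), $\iota$ preserves and reflects covering families. By dévissage, it then suffices to show that every object $X$ of $\A_{\defin(R)}$ admits a finite disjoint covering family whose constituents lie in the image of $\iota$, i.e., are locally closed definable sets. This is the content of the cell decomposition theorem for o-minimal structures \cite{Dries}: every definable $X\subseteq R^n$ decomposes as a finite disjoint union of cells $X = C_1\sqcup\dots\sqcup C_k$, and each cell is locally closed in $R^n$. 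The inclusions $\{C_i\hookrightarrow X\}$ then form a disjoint covering family in $\A_{\defin(R)}$ whose members are all in $\A_{\defin(R)}^\lc$, as required.

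The only step deserving explicit justification is that cells are locally closed. This propagates up the inductive definition: a cell is either the graph of a continuous definable function over a lower-dimensional cell (closed in a cylinder over a locally closed set, hence locally closed), or an open band strictly between two such graphs in the fiber direction (possibly allowing $\pm\infty$), which is the intersection of an open and a closed set in the cylinder over the base cell. Both operations preserve local closedness, so the property follows by induction on the dimension of the cell. With this in place, everything else is a formal consequence of dévissage.
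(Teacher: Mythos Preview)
Your proposal is correct and follows essentially the same route as the paper: both invoke the cell decomposition theorem to cover an arbitrary definable set by locally closed cells, and then apply Zakharevich's result for full sub-assemblers (cited in the paper as \cite[Theorem B]{Z-Kth-ass}, which is the dévissage-type statement you have in mind). The paper simply cites \cite[page 51]{Dries} for the fact that cells are locally closed, whereas you sketch the inductive argument; otherwise the arguments are the same.
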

\begin{proof}
    We note that $\A_{\defin(R)}^{\lc}$ is a full sub-assembler of $\A^{\defin(R)}$.  Any definable set $X\subseteq R^n$ has a cell-decomposition $X= \bigcup_I C_i$ and every cell is locally closed (\cite[page 51]{Dries}). The set $\{C_i \to X\}_{i\in I}$ is a covering family in $\A_{\defin(R)}$. This shows that every object in $\A_{\defin(R)}$ is covered by objects in $\A_{\defin(R)^{\lc}}$ and therefore by \cite[Theorem B]{Z-Kth-ass}, the inclusion induces an equivalence of $K$-theory spectra $K(\A_{\defin(R)}^{\lc}) \simeq K(\A_{\defin(R)}).$
    \end{proof}

Just like in Section \ref{sect:definable}, we can define a squares category $\C^\min_{\defin(R)^{\lc}}$ such that $K(\A_{\defin(R)})\simeq K(\A_{\defin(R)}^{\lc}) \simeq K^\square(\C^\min_{\defin(R)^\lc}).$ However, for the purpose of defining the derived Euler characteristic, we are interested in another squares category of locally closed definable sets.

We recall that definable bijections are always invertible, but they do not need to be continuous, and neither is their inverse in general.
\begin{defn}
  We call a definable bijection a \textit{definable homeomorphism} if it is continuous, and its inverse is continuous.  
\end{defn}
We note that the definition above is in contrast to some references that call a definable map a definable homeomorphism as soon as it has a definable inverse (where neither the map nor its inverse needs to be continuous).

  \begin{defn}
    A \textit{definable open embedding} is a definable injection $f:A\to B$ that is a definable homeomorphism onto $f(A)$, where $f(A)$ is open in $B$.

    A \textit{definable closed embedding} is a definable injection $f:A\to B$ that is a definable homeomorphism onto $f(A)$, where $f(A)$ is closed in $B$. 
\end{defn}
We note that definable open embeddings and definable closed embeddings are in particular (topological) open and closed embeddings, respectively. 

For $f:A\to B$ any definable injection, we define its complement map to be the inclusion $B\setminus f(A) \to B$. Clearly, the complement of a definable open embedding is a definable closed embedding, and vice versa.
\begin{defn}
    We define $\C_{\defin(R)^\lc}$ to be the squares category with complements with as distinguished object the empty set $\emptyset$, $\M$ the definable closed embeddings and $\E = \defin(R)$. The distinguished squares are the pullback squares \begin{center}
          \begin{tikzcd}
            A\arrow[d, "h"] \arrow[r, "f"] & B \arrow[d, "j"] \\
            C \arrow[r, "g"] & D
        \end{tikzcd}
    \end{center}
    such that the induced morphism $B\setminus f(A) \to D \setminus g(C)$ is a definable bijection.
\end{defn}
By the exact same proof as Proposition \ref{prop:eq_K_spec_definable}, combined with the equivalence shown in Lemma \ref{lem:eq_K_th_def_vs_def_lc}, we obtain the following.
\begin{prop}\label{prop:A_def_vs_def_lc}
 There is an equivalence of $K$-theory spectra
    $$K(\A_{\defin(R)})\cong K(\C_{\defin(R)^\lc}).$$
\end{prop}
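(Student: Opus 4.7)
The plan is to combine the reduction to locally closed sets from Lemma~\ref{lem:eq_K_th_def_vs_def_lc} with a direct application of Proposition~\ref{prop:assmebler_K_th_is_squares_K_th} to the pair $(\A_{\defin(R)}^{\lc}, \C_{\defin(R)^{\lc}})$, modelled on the argument used for Proposition~\ref{prop:eq_K_spec_definable}. Concretely, by Lemma~\ref{lem:eq_K_th_def_vs_def_lc} we already have $K(\A_{\defin(R)}) \simeq K(\A_{\defin(R)}^{\lc})$, so it suffices to produce an equivalence $K(\A_{\defin(R)}^{\lc}) \simeq K^\square(\C_{\defin(R)^{\lc}})$.

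For the second equivalence I would apply Proposition~\ref{prop:assmebler_K_th_is_squares_K_th} with ambient category $\D$ the full subcategory of $\defin(R)$ on locally closed definable sets, equipped with the coproduct from Lemma~\ref{lem:coprod}. One first observes that the explicit construction in that lemma, placing $X$ and $Y$ in disjoint slices of $R^{n+m+1}$, produces a locally closed set whenever both inputs are locally closed, and that each coproduct inclusion is a definable closed embedding into the resulting disjoint union. This ensures that (B1) and (B4) hold. Conditions (B5) and (B10) are immediate from the set-theoretic description of complements, since coproduct inclusions have complement given by the other summand, and definable bijections are covered by their own singleton. The verification of (B8), (B9) and (B11) uses nothing beyond the fact that $\A_{\defin(R)}^{\lc}$ is an assembler.

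The key point, exactly as in Proposition~\ref{prop:eq_K_spec_definable}, is that the weak equivalences in $\C_{\defin(R)^{\lc}}$ are precisely the definable bijections: for $f: A \to B$ a definable map, the square $(O \to A, O \to B)$ is distinguished iff the induced complement map $\emptyset \setminus \emptyset \to B \setminus f(A)$ is a definable bijection, forcing $f$ itself to be one, and conversely any definable bijection fits. From this, (B2), (B3), (B6) and (B7) are automatic, and (B6) in particular reduces to the definition of a distinguished square in $\C_{\defin(R)^{\lc}}$.

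I expect the only delicate step to be verifying axioms~\ref{ax:mono}--\ref{ax:dist_and_excision} of Definition~\ref{defn:squares_cat_w_complements} for $\C_{\defin(R)^{\lc}}$, since here the horizontal maps are strictly smaller than all definable injections (they must be definable closed embeddings), so one has to check stability of this class under the various pullbacks appearing in the axioms. However, the pullback of a definable closed embedding along any definable map is again a definable closed embedding (being the preimage of a closed set under a continuous definable map, with the induced continuous definable inverse on its image), which yields~\ref{ax:closed_under_pb}, and the remaining axioms become set-theoretic statements about complements in $R^n$ together with an easy continuity check, exactly paralleling the proof of Proposition~\ref{prop:eq_K_spec_definable}.
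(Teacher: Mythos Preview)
Your proposal is correct and takes exactly the paper's approach: the paper's proof of this proposition is the one-liner ``by the exact same proof as Proposition~\ref{prop:eq_K_spec_definable}, combined with the equivalence shown in Lemma~\ref{lem:eq_K_th_def_vs_def_lc}'', and that is precisely the two-step reduction you describe. One small caveat in the extra detail you supply: your justification of axiom~\ref{ax:closed_under_pb} invokes continuity of the $\E$-map being pulled back along, but in $\C_{\defin(R)^{\lc}}$ the vertical maps are \emph{all} definable maps, not only continuous ones, so a general pullback of a closed embedding along an $\E$-map need not be a closed embedding; this does not actually matter, since in the proof of Proposition~\ref{prop:Tplus_vs_T} the only pullbacks of $\M$-maps along $\E$-maps that arise are along complement maps, and complements of closed embeddings here are open embeddings, hence continuous.
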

Now we introduce yet another squares category of locally closed definable sets.
\begin{defn}
    We define $\widetilde{\C}_{\defin(R)^\lc}$ to be the squares category with complements with distinguished object $\emptyset$, $\M$ the definable closed embeddings and $\E$ the definable open embeddings. The distinguished squares are the pullback squares \begin{center}
          \begin{tikzcd}
            A\arrow[d, "h"] \arrow[r, "f"] & B \arrow[d, "j"] \\
            C \arrow[r, "g"] & D
        \end{tikzcd}
    \end{center}
  such that $j(B)\cup g(C) = D$ (or equivalently, such that the induced map $B\setminus f(A) \to D \setminus g(C)$ is a definable homeomorphism). 
\end{defn}
Proposition \ref{prop:assmebler_K_th_is_squares_K_th} does not apply here, since the maps $\amalg_I A_i \to A$ coming from covers are generally not definable homeomorphisms and are therefore not weak equivalences in $\widetilde{\C}_{\defin(R)^\lc}$. However, the squares category $\widetilde{\C}_{\defin(R)^\lc}$ is still relevant by the following proposition, comparable to Corollary \ref{cor:K_square_def_vs_K_pCGW_def} and Proposition \ref{prop:squares_cat_CS_agrees}. 
\begin{prop}\label{prop:K_cov_def_vs_def_lc}
    There is an equivalence of $K$-theory spectra  $$K(\A_{\defin(R)})\cong K(\widetilde\C_{\defin(R)^\lc}).$$
\end{prop}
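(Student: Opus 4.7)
The plan is to combine Proposition \ref{prop:A_def_vs_def_lc}, which gives $K(\A_{\defin(R)}) \simeq K^\square(\C_{\defin(R)^\lc})$, with a direct comparison $K^\square(\C_{\defin(R)^\lc}) \simeq K^\square(\widetilde{\C}_{\defin(R)^\lc})$ modeled closely on the proof of Proposition \ref{prop:squares_cat_CS_agrees}. First I would verify that $\widetilde{\C}_{\defin(R)^\lc}$ satisfies the axioms of Definition \ref{defn:squares_cat_w_complements}: these reduce to standard topological facts about open and closed embeddings between locally closed definable sets (the set-theoretic complement of a closed embedding is a definable open embedding; pullbacks of closed embeddings along open embeddings are closed embeddings, and vice versa; taking complements is functorial on pullback squares). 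Once established, Proposition \ref{prop:Tplus_vs_T} together with Lemma \ref{lem:Tplus_vs_S_square} show that both $K^\square(\widetilde{\C}_{\defin(R)^\lc})$ and $K^\square(\C_{\defin(R)^\lc})$ are computed by the $S^\square_\bullet$-construction.

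Since horizontal morphisms agree in the two squares categories, every definable open embedding is a definable map, and every distinguished square in $\widetilde{\C}_{\defin(R)^\lc}$ is distinguished in $\C_{\defin(R)^\lc}$ (a definable homeomorphism being in particular a definable bijection), there is an inclusion of simplicial categories
$$i_\bullet\colon S^\square_\bullet \widetilde{\C}_{\defin(R)^\lc} \hookrightarrow S^\square_\bullet \C_{\defin(R)^\lc}.$$
I would then show, exactly as in the proof of Proposition \ref{prop:squares_cat_CS_agrees}, that every object $A$ in $S^\square_k \C_{\defin(R)^\lc}$ is isomorphic in $S^\square_k \C_{\defin(R)^\lc}$ to the canonical filler $G_k F_k U_k H_k(A)$ arising from the proofs of Lemma \ref{lem:Tplus_vs_S_square} and Proposition \ref{prop:Tplus_vs_T}. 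The key observation is that this canonical filler has $A_{ij} = A_{0j}\setminus A_{0i}$ equipped with the subspace topology and honest inclusions as the vertical maps, and the inclusion of the complement of a closed subset is a definable open embedding; hence the canonical filler lies in the image of $i_k$.

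Both $S^\square_n \widetilde{\C}_{\defin(R)^\lc}$ and $S^\square_n \C_{\defin(R)^\lc}$ are groupoids in each simplicial degree, since weak equivalences in the former are definable homeomorphisms (whose inverses are again definable open embeddings) and those in the latter are definable bijections (invertible as definable maps). Applying Lemma \ref{lem:simplicial_set_vs_cat} to $S^\square_\bullet \widetilde{\C}_{\defin(R)^\lc}$ and to the full simplicial subcategory $\langle\im(i)\rangle_\bullet \subseteq S^\square_\bullet \C_{\defin(R)^\lc}$ spanned by the objects in the image of $i_\bullet$ -- which share the same simplicial set of objects -- yields $|N_\bullet S^\square_\bullet \widetilde{\C}_{\defin(R)^\lc}| \simeq |N_\bullet \langle\im(i)\rangle_\bullet|$. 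Meanwhile, the inclusion $\langle\im(i)\rangle_\bullet \hookrightarrow S^\square_\bullet \C_{\defin(R)^\lc}$ is degreewise fully faithful (full subcategory) and essentially surjective (by the previous step), hence a levelwise equivalence of categories and therefore a weak equivalence after geometric realization. Chaining these equivalences with Proposition \ref{prop:A_def_vs_def_lc} produces the desired equivalence.

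The main obstacle I expect is verifying that the components and naturality squares of the isomorphism $A \to G_k F_k U_k H_k(A)$ really are distinguished in $\C_{\defin(R)^\lc}$ (as opposed to only in $\widetilde{\C}_{\defin(R)^\lc}$); this should follow by invoking the excision-style axioms \ref{ax:dist_and_excision} and \ref{ax:compl_functorial} for $\C_{\defin(R)^\lc}$ essentially as in the proof of Proposition \ref{prop:Tplus_vs_T}, but demands some care because here the two complements $A_{ij}$ and $A_{0j}\setminus A_{0i}$ are only related by a definable bijection and not by a definable homeomorphism. The verification of the axioms of Definition \ref{defn:squares_cat_w_complements} for $\widetilde{\C}_{\defin(R)^\lc}$ is unavoidable bookkeeping, but each axiom is straightforward on its own.
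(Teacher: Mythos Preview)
Your proposal is correct and follows essentially the same route as the paper's proof: both argue by mirroring the proof of Proposition \ref{prop:squares_cat_CS_agrees}, establishing an essentially surjective inclusion $S^\square_\bullet \widetilde{\C}_{\defin(R)^\lc} \hookrightarrow S^\square_\bullet \C_{\defin(R)^\lc}$, invoking Lemma \ref{lem:simplicial_set_vs_cat} on the groupoids involved, and combining with Proposition \ref{prop:A_def_vs_def_lc}. Your anticipated obstacle about the isomorphism $A \to G_kF_kU_kH_k(A)$ is not an issue, since this morphism is already constructed inside $\C_{\defin(R)^\lc}$ (a squares category with complements) by the proof of Proposition \ref{prop:Tplus_vs_T}, and every weak equivalence there is a definable bijection, hence invertible.
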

\begin{proof}
The proof is essentially the same as the proof of Proposition \ref{prop:squares_cat_CS_agrees}: there is an essentially surjective inclusion $i:S^\square_\bullet \widetilde \C_{\defin(R)^\lc} \hookrightarrow S^\square_\bullet\C_{\defin(R)^\lc}$ whose image spans a full simplicial sub-category $\langle \im(i)\rangle_\bullet \subseteq S^\square_\bullet\C_{\defin(R)^\lc},$   
such that $$|S^\square_\bullet  \widetilde \C_{\defin(R)^\lc} | \simeq | \langle \im(i)\rangle_\bullet| \simeq | S^\square_\bullet\C_{\defin(R)^\lc}| $$
where the first equivalence comes from Lemma \ref{lem:simplicial_set_vs_cat}.     
    Combined with Proposition \ref{prop:A_def_vs_def_lc}, this gives the desired equivalence.
\end{proof}

To lift the definable Euler characteristic, defined in the introduction, to $K$-theory spectra, we use a homology theory of \textit{ $\Z/2$-valued constructible functions}. For $X$ a definable set, a function $f:X\to \Z/2$ is constructible if for all $n\in \Z/2$, the set $f^{-1}(n)$ is definable. For $f$ a $\Z/2$-valued constructible function, we define $\supp(f) = f^{-1}(1)$. Note that any constructible function $f:X\to \mathbb{Z}/2$ can (non-uniquely) be written as a finite sum $f= \sum_{i\in I} 1_{C_i}$ of indicator functions of cells

We denote by $CF_{\leq i}(X)$ the $\Z/2$-module of constructible functions such that $\supp (f)$ is of dimension $\leq i$. We define a chain complex $CF_*(X)$ where $$CF_i(X)  = CF_{\leq i}(X)/CF_{\leq i-1}(X).$$ The differential is given by $\partial(1_C)=1_{\partial C}$ whenever $C$ is a cell, extended linearly. We define $CH_*(X)$ to be the homology of $CF(X)$. This invariant is of interest to us for the following reason.
\begin{prop}[{\cite[Corollary 7.11]{euler_calc}}]
    For $X$ a locally closed definable set, we have $$\chi^\defin(X) = \sum_{i=0}^{\dim(X)} (-1)^{i} \dim CH^i(X).$$
\end{prop}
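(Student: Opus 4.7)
The plan is to reduce the identity to the standard fact that the Euler characteristic of a bounded chain complex of finite-dimensional $\Z/2$-vector spaces equals the Euler characteristic of its homology. The work lies entirely in identifying the dimensions of the graded pieces $CF_i(X)$ with the number of $i$-cells in a cell decomposition of $X$.

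First I would fix a cell decomposition $X = \bigsqcup_{j\in J} C_j$ of $X$. Since every definable subset $Y \subseteq X$ admits a refinement of this cell decomposition into a cell decomposition of $X$ in which $Y$ appears as a union of cells, the indicator function of any definable subset of dimension $\leq i$ can be written as a finite sum $\sum 1_{C_j}$ of indicator functions of cells of dimension $\leq i$ (after refining the chosen decomposition). Consequently, $CF_{\leq i}(X)$ is spanned over $\Z/2$ by the classes $\{1_{C_j} : \dim C_j \leq i\}$. The key claim to verify is that these classes are linearly independent modulo $CF_{\leq i-1}(X)$: a relation $\sum_{\dim C_j = i} 1_{C_j} \equiv 0 \pmod{CF_{\leq i-1}(X)}$ would force the corresponding union of $i$-cells to have dimension strictly less than $i$, which contradicts additivity of dimension. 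Hence $CF_i(X)$ is a finite-dimensional $\Z/2$-vector space of dimension equal to $\#\{j : \dim C_j = i\}$.

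Given this, the Euler characteristic of the chain complex $CF_\ast(X)$ computes to
\[
\sum_{i=0}^{\dim(X)} (-1)^i \dim CF_i(X) = \sum_{i=0}^{\dim(X)} (-1)^i \cdot \#\{j : \dim C_j = i\} = \sum_{j\in J} (-1)^{\dim C_j},
\]
which is exactly $\chi^\defin(X)$ by definition. On the other hand, by the standard Euler characteristic argument for finite-dimensional bounded chain complexes (which applies since the complex is supported in degrees $0,\dots,\dim(X)$ with finite-dimensional terms), this sum also equals $\sum_i (-1)^i \dim CH_i(X)$. Combining the two computations yields the claimed identity.

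The only nontrivial step is the independence claim in the second paragraph, i.e., that the $i$-cells form a basis of $CF_i(X)$ rather than merely a spanning set; this is the one place where the specific behavior of definable sets and dimension is used, relying on the fact that a finite union of $i$-dimensional cells cannot have dimension $< i$ and on the compatibility of any two cell decompositions via a common refinement. Once this is in hand, the rest is formal.
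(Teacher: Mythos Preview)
The paper does not give its own proof of this statement; it is cited directly from \cite[Corollary 7.11]{euler_calc}. So there is no paper proof to compare against, but your proposal can still be evaluated on its own terms.

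Your argument has a genuine gap: the chain groups $CF_i(X)$ are in general \emph{not} finite-dimensional, so the standard Euler-characteristic identity for bounded complexes of finite-dimensional vector spaces does not apply. For a concrete example, take $X=R$. A constructible function $f:R\to\Z/2$ is the indicator of a finite union of points and open intervals, and two such functions agree in $CF_1(R)=CF_{\leq 1}(R)/CF_{\leq 0}(R)$ exactly when their supports differ by a finite set. The classes of $1_{(0,a)}$ for distinct $a>0$ are therefore pairwise distinct in $CF_1(R)$, so $CF_1(R)$ is infinite-dimensional. Your conclusion that $\dim CF_i(X)$ equals the number of $i$-cells in a chosen decomposition is false.

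The source of the confusion is the step where you argue that the indicator of any definable subset of dimension $\leq i$ is a sum of indicators $1_{C_j}$ of cells from a \emph{fixed} decomposition. This is only true after passing to a refinement, and the refinement depends on the subset; ranging over all definable subsets forces arbitrarily fine cell decompositions, and the resulting collection of cell-indicators is neither finite nor linearly independent modulo lower-dimensional functions. A correct proof must establish finite-dimensionality of the \emph{homology} $CH_i(X)$ by other means (for instance via an explicit computation on cells together with a long exact sequence or excision argument, as in the cited reference), rather than deducing it from finite-dimensionality of the chain groups.
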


The chain complex $CF_*(X)$, and therefore its homology, is contravariantly functorial in open embeddings: for $i:U\to X$ an open embedding, we define $i^*(1_C) = 1_{i^{-1}(C)}$ on indicator functions of cells. On the other hand, $CF_*(X)$ is covariantly functorial in closed embeddings: for $j:X\to Y$ a closed embedding, we define $j_!(1_C) = 1_{j(C)}$. Both of these functors commute with the differential, and preserve dimension. 

As in \cite[Section 5.1]{manifolds}, we denote by $\ch^\hb_{\Z/2}$ the Waldhausen category of homologically bounded chain complexes over $\Z/2$. By abuse of notation, we denote the associated squares category, as in Example \ref{ex:wald_square}, by $\ch^\hb_{\Z/2}$ as well.
\begin{prop}
    The assignment $X\mapsto CF_*(X)$ extends to a simplicial functor
    $$CF_*:S_\bullet^\square\widetilde \C_{\defin(R)^\lc} \to S^\square_\bullet\ch^\hb_{\Z/2}$$
\end{prop}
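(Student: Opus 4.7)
The plan is to show that $CF_*$ defines a functor of squares categories $\widetilde{\C}_{\defin(R)^\lc} \to \ch^\hb_{\Z/2}$; applying $S^\square_n$ levelwise then yields the desired simplicial functor automatically. First I would check that $CF_*(X)$ lies in $\ch^\hb_{\Z/2}$: since $\dim(X)$ is finite by o-minimality, $CF_i(X) = 0$ for $i > \dim(X)$, so $CF_*(X)$ is homologically bounded. On a horizontal morphism---a definable closed embedding $j\colon X \to Y$---I would send $j \mapsto j_!$, which is a cofibration in $\ch^\hb_{\Z/2}$ because it is levelwise injective, taking indicator functions of cells in $X$ to indicator functions of cells in $Y$. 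On a vertical morphism---an open embedding $i\colon U \to X$---I would send $i \mapsto i^*$; since $\E_{\ch^\hb_{\Z/2}} = (\ch^\hb_{\Z/2})^{\op}$, the chain map $i^*\colon CF_*(X) \to CF_*(U)$ represents a vertical morphism from $CF_*(U)$ to $CF_*(X)$ in the squares category $\ch^\hb_{\Z/2}$. Functoriality $(j'j)_! = j'_! j_!$ and $(i'i)^* = i^* i'^*$ is immediate from the definitions on indicator functions.

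The substantive step is verifying that a distinguished square of $\widetilde{\C}_{\defin(R)^\lc}$ is sent to a distinguished square in $\ch^\hb_{\Z/2}$. Given a distinguished square with $A = B \cap C$ and $j(B) \cup g(C) = D$, commutativity of the image in $\ch^\hb_{\Z/2}$ reduces to the identity $f_! \circ h^* = j^* \circ g_!$, which I would check on the indicator function $1_K$ of a cell $K \subseteq C$: both sides equal $1_{K \cap A}$, since $K \cap B = K \cap C \cap B = K \cap A$. For the weak-equivalence condition of Example~\ref{ex:wald_square}, the key observation is that $C \setminus A$ is closed in $D$ (as $C$ is closed in $D$ and $A$ is open in $C$), with open complement $D \setminus (C \setminus A) = B \cup A = B$. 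The closed embeddings $C \setminus A \hookrightarrow C$ and $C \setminus A \hookrightarrow D$ then fit into short exact sequences of chain complexes
\begin{equation*}
0 \to CF_*(C \setminus A) \to CF_*(C) \xrightarrow{h^*} CF_*(A) \to 0 \quad \text{and} \quad 0 \to CF_*(C \setminus A) \to CF_*(D) \xrightarrow{j^*} CF_*(B) \to 0.
\end{equation*}
The first identifies $CF_*(A)$ with $CF_*(C)/CF_*(C \setminus A)$, so the pushout $CF_*(A) \cup_{CF_*(C)} CF_*(D)$ simplifies to $CF_*(D)/g_! CF_*(C \setminus A) = CF_*(D)/CF_*(C \setminus A)$, and the induced map to $CF_*(B)$ is precisely the isomorphism supplied by the second sequence. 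In particular it is a quasi-isomorphism, so the image square is distinguished.

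The main obstacle is confirming that the two short exact sequences above really are exact sequences of chain complexes: this requires checking that the cellular boundary map is compatible with both extension by zero along closed embeddings and restriction along open embeddings, which should follow from choosing cell decompositions compatible with the subset in question and from the fact that the topological boundary of a cell in $C$, viewed inside $D$, does not acquire new pieces. Once this is pinned down, $CF_*$ is a functor of squares categories, and applying $S^\square_n$ levelwise produces the required simplicial functor, compatible with the face and degeneracy maps by naturality of the $S^\square_n$-construction.
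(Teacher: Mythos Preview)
Your proposal is correct and follows essentially the same route as the paper. Both arguments reduce the check that distinguished squares go to distinguished squares to the ``basic'' cofiber sequence $0 \to CF_*(Z) \to CF_*(X) \to CF_*(X\setminus Z) \to 0$ for a closed $Z \subseteq X$: the paper states only this basic case and leaves the reduction implicit, while you invoke it twice (for $C\setminus A \subseteq C$ and $C\setminus A \subseteq D$) to handle an arbitrary distinguished square directly. Your packaging as ``functor of squares categories, then apply $S^\square_\bullet$'' is a slightly cleaner organization than the paper's direct description of the simplicial functor, but the content is the same.
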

\begin{proof}
For $j$ a closed embedding, $j_!$ is level-wise injective and therefore a cofibration. Hence, to see that $CF_*$ is well-defined on objects, it suffices to see that for a square of the form 
\begin{center}
    \begin{tikzcd}
        \emptyset \arrow[r] \arrow[d] & X\setminus C \arrow[d, "i"]\\
        C \arrow[r, "j"] & X
    \end{tikzcd}
\end{center}
the square
\begin{center}
    \begin{tikzcd}
        CF_*(C) \arrow[r, "j_!"] \arrow[d] & CF_*(X) \arrow[d, "i^*"]\\
        0 \arrow[r] & CF_*(X\setminus C)
    \end{tikzcd}
\end{center}
is a pushout, which is easy to check. The morphisms in $S_\bullet^\square\widetilde \C_{\defin(R)^\lc}$ are level-wise definable homeomorphisms, and therefore sent to level-wise isomorphisms in $S^\square_\bullet\ch^\hb_{\Z/2}$. The functor $CF_*$ commutes with simplicial structure maps, and is therefore a simplicial functor, as desired. 
\end{proof}
Combined with Proposition \ref{prop:K_cov_def_vs_def_lc} we obtain the following corollary.
\begin{cor}\label{cor:map_of_spectra_def_ch}
    There is a map of $K$-theory spectra 
    $$K(\A_{\defin(R)}) \to K(\ch^\hb_{\Z/2}).$$
\end{cor}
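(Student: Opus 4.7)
The proof proposal is essentially to assemble pieces that are all in place. The previous proposition produces a simplicial functor
$$CF_* : S_\bullet^\square \widetilde\C_{\defin(R)^\lc} \to S_\bullet^\square \ch^\hb_{\Z/2},$$
so the first step is to realize this simplicial functor to obtain a map of pointed spaces $|N_\bullet S^\square_\bullet \widetilde\C_{\defin(R)^\lc}| \to |N_\bullet S^\square_\bullet \ch^\hb_{\Z/2}|$, and then loop at the distinguished vertex. By Proposition \ref{prop:Tplus_vs_T} (applied to each side, noting that both $\widetilde\C_{\defin(R)^\lc}$ and $\ch^\hb_{\Z/2}$ are squares categories with complements in the sense of Definition \ref{defn:squares_cat_w_complements}), the resulting looped map is, up to equivalence, a map of squares $K$-theory spaces $K^\square(\widetilde\C_{\defin(R)^\lc}) \to K^\square(\ch^\hb_{\Z/2})$. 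Composing with the equivalence of Proposition \ref{prop:K_cov_def_vs_def_lc} and with the comparison of $K^\square(\ch^\hb_{\Z/2})$ to the usual Waldhausen $K$-theory of $\ch^\hb_{\Z/2}$ recalled after Definition \ref{defn:S_square}, one obtains a map of spaces $K(\A_{\defin(R)}) \to K(\ch^\hb_{\Z/2})$.

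To upgrade this to a map of spectra, the plan is to exploit symmetric monoidal structures. Both $\widetilde\C_{\defin(R)^\lc}$ and $\ch^\hb_{\Z/2}$ are symmetric monoidal squares categories, the former under the disjoint union of definable sets (realized via the restricted pushout of Definition \ref{defn:rest_coprod}, as in Lemma \ref{lem:coprod}) with unit $\emptyset$, and the latter under direct sum with unit $0$. The functor $X \mapsto CF_*(X)$ sends disjoint unions of locally closed definable sets to direct sums of chain complexes (by the defining short exact sequences $0 \to CF_*(U) \to CF_*(U \sqcup V) \to CF_*(V) \to 0$ arising from the clopen decomposition $U \sqcup V$), so $CF_*$ is strong symmetric monoidal. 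Hence the induced map on $K$-theory spaces is an infinite loop map, i.e.\ it promotes to a map of connective $\Omega$-spectra $K^\square(\widetilde\C_{\defin(R)^\lc}) \to K^\square(\ch^\hb_{\Z/2})$ by \cite[Theorem 2.5]{squares}.

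It remains to identify the source with $K(\A_{\defin(R)})$ as spectra, not merely as spaces. This follows because the equivalence $K(\A_{\defin(R)}) \simeq K^\square(\widetilde\C_{\defin(R)^\lc})$ in Proposition \ref{prop:K_cov_def_vs_def_lc} is built through the zig–zag $K(\A_{\defin(R)}) \simeq K^\square(\C_{\defin(R)^\lc})$ of Proposition \ref{prop:A_def_vs_def_lc} (which is the content of Proposition \ref{prop:assmebler_K_th_is_squares_K_th}, produced by a symmetric monoidal simplicial functor $\W(\A_{\defin(R)^\lc})^\bullet \to S^\square_\bullet \C_{\defin(R)^\lc}$) followed by the inclusion $S^\square_\bullet \widetilde\C_{\defin(R)^\lc} \hookrightarrow S^\square_\bullet \C_{\defin(R)^\lc}$, both of which respect disjoint union. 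Each step thus promotes to an equivalence of $\Gamma$-spaces, hence of spectra. Composing all of this yields the desired map of spectra.

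I expect the main obstacle to be verifying cleanly that the simplicial functor $CF_*$ is genuinely symmetric monoidal on the nose (as opposed to only up to coherent isomorphism), so that it lifts to a map of $\Gamma$-spaces without having to pass through a rectification. This can be arranged because the disjoint union of locally closed definable sets is implemented literally by taking definable disjoint copies, and $CF_*$ of such a disjoint union is a strict direct sum of chain complexes of $\Z/2$-modules; everything else is bookkeeping.
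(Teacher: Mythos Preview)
Your approach is essentially the paper's: combine the simplicial functor $CF_*$ on $S^\square_\bullet$ from the preceding proposition with the equivalence of Proposition \ref{prop:K_cov_def_vs_def_lc}. The paper states this in one line and leaves the spectrum-level bookkeeping implicit; your unpacking via the symmetric monoidal structures is a reasonable way to make that explicit.

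There is one error worth flagging. The squares category $\ch^\hb_{\Z/2}$, obtained from a Waldhausen category via Example \ref{ex:wald_square}, is \emph{not} a squares category with complements: its vertical category is $\E_\W = \W^{\op}$, so $\E$ and $\M$ are not both wide subcategories of a common ambient category $\D$, and the axioms of Definition \ref{defn:squares_cat_w_complements} (which presuppose such a $\D$, and in particular require distinguished squares to be pullback squares in $\D$) do not even parse. Hence you cannot invoke Proposition \ref{prop:Tplus_vs_T} on the target side. The fix is immediate: for a Waldhausen category the comparison $|N_\bullet S^\square_\bullet \C_\W| \simeq |N_\bullet T_\bullet \C_\W| \simeq K(\W)$ is exactly what Waldhausen proves in \cite[Section 1.3]{waldhausen} (this is what the paper cites right after the definition of $K^\square$), or alternatively it follows from the proto-Waldhausen results of \cite{Calle_Sarazola}. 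With that replacement your argument goes through.
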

There is a standard equivalence $K(\ch^\hb_{\Z/2}) \simeq K(\Z/2)$, and we recall that $K_0(\Z/2)= \Z$. 
\begin{prop}\label{prop:map_lifts_eulerchar}
    On $\pi_0$, the map  $$K(\A_{\defin(R)}) \to K(\Z/2)$$ agrees with the definable Euler characteristic.
\end{prop}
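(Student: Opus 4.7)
The plan is to trace the definition of the map on $\pi_0$ through each of its constituent equivalences and functors, and to reduce the claim to a straightforward Euler characteristic calculation. The map in Corollary \ref{cor:map_of_spectra_def_ch} is built as the composite
\begin{equation*}
K(\A_{\defin(R)}) \xleftarrow{\sim} K(\A_{\defin(R)}^\lc) \xleftarrow{\sim} K^\square(\widetilde\C_{\defin(R)^\lc}) \xrightarrow{CF_*} K^\square(\ch^\hb_{\Z/2}) \xrightarrow{\sim} K(\Z/2).
\end{equation*}
On $\pi_0$, the right-hand equivalence sends a bounded complex of $\Z/2$-modules to its ordinary Euler characteristic $\sum_i (-1)^i \dim_{\Z/2} H^i$.

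First I would show that, on $\pi_0$, the backward equivalences send the class $[X]$ of a locally closed definable set to itself. For the equivalence of Lemma \ref{lem:eq_K_th_def_vs_def_lc} this is automatic since $\A^\lc_{\defin(R)} \hookrightarrow \A_{\defin(R)}$ is a sub-assembler, so the class of $X$ on the right is the class of $X$ on the left. For the equivalence of Proposition \ref{prop:K_cov_def_vs_def_lc}, I would unwind the construction: the inclusion $S^\square_\bullet\widetilde\C_{\defin(R)^\lc} \hookrightarrow S^\square_\bullet\C_{\defin(R)^\lc}$ is the identity on objects, and the equivalence with $K(\A_{\defin(R)}^\lc)$ (coming from Proposition \ref{prop:A_def_vs_def_lc}, i.e.\ the general Proposition \ref{prop:assmebler_K_th_is_squares_K_th}) sends a one-element tuple $\{X\}$ in $\W(\A)$ to the object $X$ in $S^\square_1$. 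Hence on $\pi_0$ the composite map sends $[X] \mapsto [X]$ for $X$ locally closed.

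Second, for a general definable set $X$ I would pick a cell decomposition $X = \bigsqcup_{i \in I} C_i$, so that each $C_i$ is locally closed and $[X] = \sum_i [C_i]$ in $K_0(\A_{\defin(R)})$, since $\{C_i \to X\}_{i \in I}$ is a covering family. Applying the displayed composite, the image of $[X]$ equals
\begin{equation*}
\sum_{i \in I} [CF_*(C_i)] = \sum_{i\in I} \chi(CF_*(C_i)) = \sum_{i\in I}\sum_{j\geq 0}(-1)^j \dim_{\Z/2} CH^j(C_i) \in \Z.
\end{equation*}

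Third, I would invoke \cite[Corollary 7.11]{euler_calc} applied to each locally closed $C_i$ to rewrite the inner sum as $\chi^\defin(C_i) = (-1)^{\dim(C_i)}$. Summing over $i \in I$ then reproduces the definition of the definable Euler characteristic $\chi^\defin(X) = \sum_{i\in I}(-1)^{\dim(C_i)}$, as desired. The only step demanding any care is the first, namely identifying the $\pi_0$-level effect of the zig-zag of equivalences; once that is done, the remainder is a one-line Euler characteristic computation combined with the cited result from o-minimal homology.
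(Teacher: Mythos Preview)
Your proposal is correct and follows the same approach the paper intends by deferring to \cite[Theorem 5.1]{manifolds}: trace the effect of the composite on $\pi_0$, identify $[X]\mapsto [CF_*(X)]$ for locally closed $X$, and then invoke the cited identity $\chi^\defin(X)=\sum_i(-1)^i\dim CH^i(X)$ together with additivity on a cell decomposition. Your explicit unwinding of the zig-zag on $\pi_0$ is exactly the content that the paper leaves to the reader via the reference.
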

\begin{proof}
    The proof goes as the proof of \cite[Theorem 5.1]{manifolds}.
\end{proof}
\begin{rmk}
    We used $\Z/2$-valued constructible functions for convenience, but we expect that with more care, a map  $K(\A_{\defin(R)}) \to K(\Z)$ can be constructed which lifts the Euler characteristic, using $\Z$-valued constructible functions. Another avenue to obtain this, could be to use Borel-Moore homology or compactly supported cohomology with $\Z$-coefficients; see also \cite[Remark 7.12]{euler_calc}.
\end{rmk}

\bibliographystyle{alpha}
\bibliography{bibliography}

\end{document}